\documentclass[10pt]{article}

\usepackage[hidelinks]{hyperref}
\usepackage{caption}
\usepackage{xcolor}
\hypersetup{
    colorlinks,
    linkcolor={red!50!black},
    citecolor={blue!50!black},
    urlcolor={blue!80!black}
}
\usepackage{enumitem}
\usepackage[margin=1in]{geometry} 
\usepackage{changepage}
\usepackage{pdflscape}
\usepackage{lineno}

\newcommand{\tcr}{\textcolor{red}}
\usepackage{floatrow}
\usepackage[font=small,labelformat=simple]{caption, subfig}
\usepackage{graphbox}
\usepackage{adjustbox}

\usepackage{longtable}
\usepackage{booktabs}
\usepackage{array}
\usepackage{enumitem}
\usepackage{appendix}
\usepackage{titling}
\usepackage{enumitem}

\usepackage{tikz}
\usepackage{forest}
\usepackage{array}
\usepackage{adjustbox}

\forestset{
    default preamble={
        where n children=0{tier=leaf}{},
        for tree={
            calign=last,
            l sep-=.7em,l=0,
            s sep-=0.5em,
            parent anchor=center, child anchor=center,
        }
    }
}
\newsavebox{\treebox}
\newcolumntype{T}[1]{>{\begin{lrbox}{\treebox}}m{#1}<{\end{lrbox}\centering\begin{adjustbox}{max width=#1}\unhbox\treebox\end{adjustbox}}}

\usepackage{mathtools} 
\usepackage{amssymb}
\usepackage[amsmath, amsthm, thmmarks]{ntheorem}
\usepackage[capitalize, noabbrev]{cleveref}
\crefname{equation}{Eq.}{Eqs.}

\newtheorem{theorem}{Theorem}[section]
\newtheorem{proposition}[theorem]{Proposition}
\newtheorem{corollary}[theorem]{Corollary}
\newtheorem{lemma}[theorem]{Lemma}
\newtheorem{definition}[theorem]{Definition}
\newtheorem{conjecture}[theorem]{Conjecture}

\theorembodyfont{\upshape}
\theoremstyle{break}

\usepackage{natbib}

\renewcommand{\geq}{\geqslant}
\renewcommand{\leq}{\leqslant}

\renewenvironment{proof}{\noindent {\bfseries Proof.}}{$\square$}

\title{Extremal ranks of unlabeled multifurcating rooted trees \\ in a bijective encoding by the positive integers}
\author{Michael R Doboli$^*$, Alessandra R P Maranca$^*$, 
and Noah A Rosenberg\thanks{Department of Biology, Stanford University, Stanford, CA 94305, USA}}
\date{\today}

\begin{document}
\maketitle

\noindent {\bf Abstract.} Maranca \& Rosenberg~\cite{Maranca2024} devised a ranking scheme for unlabeled multifurcating rooted trees, in which the trees are bijectively associated with the positive integers. Here, generalizing earlier results for bifurcating trees, we determine, for trees with a fixed number of leaves, which multifurcating trees obtain the maximal and minimal ranks. We identify these maximizing and minimizing trees for each of two sets of unlabeled multifurcating rooted trees: strictly $k$-furcating trees, in which each internal node possesses exactly $k$ descendants, and at-most-$k$-furcating trees, in which internal nodes possess at least 2 and at most $k$ descendants. In both scenarios, we find that a tree that can be regarded as maximally balanced attains the minimal rank, and a minimally balanced tree attains the maximal rank. We deduce recurrences for the maximal and minimal rank for trees with fixed numbers of leaves in both the strictly $k$-furcating and at-most-$k$-furcating cases. The maximal rank on $(n-1)(k-1)+1$ leaves grows with $(k!)^{\frac{1}{k-1}} \beta_k^{(k^n)}$ in the strictly $k$-furcating case, and the maximal rank on $n$ leaves grows with $(k!)^{\frac{1}{k-1}} \gamma_k^{(k^n)}$ in the at-most-$k$-furcating case, where $\beta_k > 1$ and $\gamma_k > 1$ are constants that depend on the value of $k$. We show that $\beta_k$ decreases as the value of $k$ increases, and that $\gamma_k > \beta_k$ for $k \geq 3$. The results contribute to the use of tree encodings for empirical characterization of phylogenies and measurement of tree balance.
 
\vskip .3cm
{\small
\noindent {\bf Mathematics subject classification (2020):} 05C05, 05C30, 92D15 }

\vskip .15cm
{\small
\noindent {\bf Keywords:} mathematical phylogenetics, recurrences, unlabeled trees 
}

\section{Introduction}

Encodings are a central topic of mathematical phylogenetics~\cite{DressEtAl12, SempleAndSteel03}. Given a class of trees, to what extent can a tree from the class be encoded by features such as quartets, splits, or pairwise distances between leaves---or by a single positive integer? 

For the set of unlabeled rooted binary trees, a scheme introduced by Colijn \& Plazzotta~\cite{Colijn2018} bijectively associates positive integers with trees. The single-leaf tree is assigned rank 1. Each subsequent tree is then associated with an ordered pair $(j_1, j_2)$, $j_1 \geq j_2 \geq 1$, where $j_1$ is the rank of the left subtree and $j_2$ is the rank of the right subtree; ``left'' and ``right'' are used for convenience, but trees are non-plane. The rank of the tree is then computed by finding the position of $(j_1,j_2)$ in the dictionary ordering of ordered pairs $(a,b)$ satisfying $a \geq b \geq 1$: $(1,1), (2,1), (2,2), (3,1), (3,2)$, and so on. For example, the tree with rank 3 is the tree associated with ordered pair $(2,1)$, or the caterpillar tree on 3 leaves. More generally, we can directly compute that the tree associated with $(j_1,j_2)$ has rank $j_1(j_1-1)/2 + 1 + j_2$. This map between trees and the positive integers is in fact bijective. Examples of the ranks of small bifurcating trees appear in Table~\ref{table:bifurcating_CP_rank}.

In a further analysis of this bijection, Rosenberg~\cite{Rosenberg2021} identified the trees with the smallest and largest rank among all trees with a given number of leaves. The tree with the smallest rank on $n$ leaves is a specific highly balanced tree, with rank $a_n$ satisfying $a_n = \frac{1}{2} a_{\lceil n / 2 \rceil} (a_{\lceil n/2 \rceil} - 1) + 1 + a_{\lfloor n / 2 \rfloor}$, $a_1 = 1$. The tree with the maximal rank is the caterpillar, the minimally balanced tree, with rank $b_n$ satisfying $b_n = \frac{1}{2} b_{n-1}(b_{n-1}-1) + 2$, $b_1 = 1$. A curious relation $a_{2^n} + 1 = b_{n+2}$ holds between the minimal and maximal ranks for all $n \geq 0$. 

Rosenberg~\cite{Rosenberg2021} also studied the asymptotic growth of the smallest and largest ranks among $n$-leaf trees, showing $b_n \sim 2 \beta^{(2^n)}$, where $\beta \approx 1.05653$. An immediate corollary is $a_{2^n} \sim 2 \alpha^{(2^n)}$, where $\alpha = \beta^4 \approx 1.24602$. Doboli et al.~\cite{DoboliEtAl24} then found $a_n \sim 2[2^{P(\log_2 n)}]^n$, where $P$ is a 1-periodic function satisfying $1.24602 < 2^{P(t)} < 1.33429$ for all $t$. This result improved upon an earlier upper bound $a_n < ( \frac{3}{2} )^n$ of Rosenberg~\cite{Rosenberg2021}.

Maranca \& Rosenberg~\cite{Maranca2024} have generalized the ranking scheme of Colijn \& Plazzotta~\cite{Colijn2018} to provide a scheme for ranking unlabeled rooted \emph{multifurcating} trees. In the Maranca--Rosenberg scheme for \emph{strictly $k$-furcating} trees, where each internal node has exactly $k$ descendants, we associate to each tree a $k$-tuple $(j_1,j_2,\ldots,j_k)$, where $j_1 \geq j_2 \geq \ldots \geq j_k \geq 1$ and $j_i$ denotes the rank of the $i$th subtree (canonically ordered from left to right). Each tuple is then associated to a positive integer corresponding to its position in the dictionary ordering of $k$-tuples $(j_1, j_2, \ldots, j_k)$, where $j_1 \geq j_2 \geq \ldots \geq j_k \geq 1$. For example, with $k=3$, after rank 1 is assigned to the 1-leaf tree, the dictionary ordering begins $(1,1,1), (2,1,1), (2,2,1), (2,2,2), (3,1,1)$. This generalized scheme provides a bijection between the strictly $k$-furcating trees and the positive integers.

Maranca \& Rosenberg~\cite{Maranca2024} also provided a ranking scheme for \emph{at-most-$k$-furcating} trees, where each internal node has at least two and at most $k$ descendants. To accommodate the fact that the root might have fewer than $k$ immediate descendant nodes, each tree is associated with a tuple $(j_1,j_2, \ldots, j_k)$, where $j_1 \geq j_2 \geq \ldots \geq j_k \geq 0$ and $j_1 \geq j_2 \geq 1$. The latter condition encodes the fact that each internal node must possess at least two descendant nodes. Each tuple is then associated to a positive integer corresponding to its position in the dictionary ordering of $k$-tuples $(j_1, j_2, \ldots, j_k)$, where $j_1 \geq j_2, \ldots \geq j_k \geq 0$ and $j_1 \geq j_2 \geq 1$. For example, for $k=3$, after the initial 1-leaf tree, the dictionary ordering of these tuples begins $(1,1,0), (1,1,1), (2,1,0), (2,1,1), (2,2,0), (2,2,1), (2,2,2), (3,1,0)$. This labeling scheme provides a bijection between at-most-$k$-furcating trees and positive integers. 

In this study, we conduct analogous work to Rosenberg~\cite{Rosenberg2021} on the maximal and minimal rank, but for the multifurcation schemes of Maranca \& Rosenberg~\cite{Maranca2024}. We first reframe the schemes of Maranca \& Rosenberg~\cite{Maranca2024} by providing a notion of lexicographically ordering tuples. We use this notion to prove a useful relation between two trees' ranks and the ranks of their subtrees (\cref{sec:prelims}). Next, we find the strictly $k$-furcating trees with maximal (\cref{subsec:strictly_k_maximal_rank}) and minimal rank (\cref{subsec:strictly_k_minimal_rank}). We also compute a recursive formula describing the maximal rank of strictly $k$-furcating trees on $(n-1)(k-1)+1$ leaves, $b_n$, and the minimal rank of strictly $k$-furcating trees on $(n-1)(k-1)+1$ leaves, $a_n$. We explore the asymptotic growth of $a_n$ and $b_n$ (\cref{subsec:strictly_k_asymptotics}). We then address these same questions in the at-most-$k$-furcating case (\cref{sec:at_most_k_furcating}), arriving at similar answers. We conclude with a discussion (\cref{sec:discussion}).

\begin{table}[tb]
\begin{tabular}{|c|c|T{2cm}|} 
\hline
$f(t)$ & $K(t)$ & $t$ \\
\hline
1 & $(1, 0)$ & \begin{forest}[[]]\end{forest} \\
2 & $(1,1)$ & \begin{forest}[[][]]\end{forest}\\
3 & $(2,1)$ & \begin{forest}[[[][]][]]\end{forest} \\
4 & $(2,2)$ & \begin{forest}[[[][]][[][]]]\end{forest} \\
5 & $(3,1)$ & \begin{forest}[[[[][]][]][]]\end{forest} 
\\
6 & $(3,2)$ & \begin{forest}[[[[][]][]][[][]]]\end{forest} \\
7 & $(3,3)$ & \begin{forest}[[[[][]][]][[[][]][]]]\end{forest} \\
8 & $(4,1)$ & \begin{forest} [[[[][]][[][]]] []] \end{forest} \\
9 & $(4,2)$ & \begin{forest}[[[[][]][[][]]][[][]]]\end{forest} \\
\hline
\end{tabular}
\vspace{-.2cm}
\captionof{table}{The first several ranks $f(t)$ of unlabeled rooted binary trees. $K(t)$ values are sorted in lexicographically increasing order (after the trivial first row). The left subtree of tree $t$ has rank equal to the first number in $K(t)$, and the right subtree has rank equal to the second number in $K(t)$. The tree corresponding to rank 1 is defined to be the tree with one leaf, and we write $K(t)=(1,0)$ for this tree.}
\label{table:bifurcating_CP_rank}
\end{table}

\section{Preliminaries} 
\label{sec:prelims}

\subsection{Lexicographical ordering}

We first recall a notion of lexicographically ordering $k$-tuples of non-negative integers. 
\begin{definition}
Let $A_k = \{(a_1,a_2,\ldots,a_k) \in \mathbb{Z}^k: a_1 \geq a_2 \ldots \geq a_k \geq 0 \} \subset \mathbb{Z}^k$ for $k \geq 1$. For $X,Y \in A_k$ with $X=(x_1,x_2,\ldots,x_k)$ and $Y=(y_1,y_2,\ldots,y_k)$, we say that $X \leq_D Y$ if either of the following holds: 
\begin{enumerate}[label=(\roman*)]

\item $x_i = y_i$ for all $i$, $1 \leq i \leq k$.

\item $x_i \neq y_i$ for some $i$, and if $j$ is the smallest positive integer such that $x_j \neq y_j$, then $x_j < y_j$.

\end{enumerate}
\end{definition}

\noindent If $X,Y \in A_k$ satisfy $X \leq_D Y$ and $X \neq Y$, then we can also write $X <_D Y$.

\subsection{Strictly $k$-furcating trees} 
\label{subsec:prelims-strictly-k}

For $k \geq 2$, we use the ranking scheme of Maranca \& Rosenberg~\cite{Maranca2024} for strictly $k$-furcating trees. Denote $T_k$ as the set of strictly $k$-furcating trees, with the single-leaf tree also included. We denote $f:T_k \to \mathbb{Z}^+$ as the map sending each strictly $k$-furcating tree to its \emph{rank}.

For all trees $t$ in $T_k$ other than the single-leaf tree, the \textit{canonical order} of $t$ is the ordering of subtrees $t_1, t_2, \ldots,t_k$ at the root such that $f(t_1) \geq f(t_2) \geq \ldots \geq f(t_k)$. In a planar representation of $t$, it is convenient to represent the canonical order from left to right, with $t_1$ as the leftmost and $t_k$ as the rightmost subtree.

For a given tree $t \in T_k$ (other than the single-leaf tree), define $K(t) = \big(f(t_1), f(t_2), \ldots, f(t_k)\big)$, where $t_1, t_2, \ldots, t_k$ is the canonical order of the subtrees of $t$ at the root. By definition of the canonical order, $f(t_1) \geq f(t_2) \ldots \geq f(t_k)$. 

Definition 4.2 of Maranca \& Rosenberg~\cite{Maranca2024} establishes that if $K(t) = (x_1, x_2, \ldots, x_k)$ and $t \in T_k$, then: 
\begin{align} 
\label{eq:rank_strictly_k}
f(t) = 2 + \sum_{i=1}^{k} \binom{x_{k-i+1} + i -2}{i}.
\end{align}
We prove a lemma relating the rank of a strictly $k$-furcating tree with the lexicographical order of the canonical ordering. The proof is in Appendix~\ref{pf:strict_tree_order}. This lemma confirms the intuition that the ordering of trees in the canonical ordering is the same as the ordering induced by the tree rank. It is used extensively in~\cref{subsec:strictly_k_maximal_rank} and~\cref{subsec:strictly_k_minimal_rank} to compare the ranks of trees by instead comparing the lexicographical ordering of the trees' canonical orderings.

\begin{lemma} 
\label{lemma:strict_tree_order}
For two trees $t_1,t_2 \in T_k$, $K(t_1) \leq_D K(t_2)$ if and only if $f(t_1) \leq f(t_2)$. Additionally, $K(t_1) = K(t_2)$ if and only if $t_1 = t_2$.
\end{lemma}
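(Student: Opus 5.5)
The plan is to show directly from \eqref{eq:rank_strictly_k} that the map $g(x_1,\ldots,x_k) = 2 + \sum_{i=1}^{k}\binom{x_{k-i+1}+i-2}{i}$ is strictly increasing with respect to $<_D$ on the set $B_k=\{(x_1,\ldots,x_k): x_1\geq\cdots\geq x_k\geq 1\}$. Since $f(t)=g(K(t))$ and $<_D$ is a total order on $A_k$, strict monotonicity of $g$ gives both directions of the first claim at once. If $K(t_1)\leq_D K(t_2)$, then either $K(t_1)=K(t_2)$, so the ranks are equal, or $K(t_1)<_D K(t_2)$, so $f(t_1)<f(t_2)$. Conversely, if $K(t_1)\not\leq_D K(t_2)$, then $K(t_2)<_D K(t_1)$ by totality, so $f(t_2)<f(t_1)$. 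The single-leaf tree, with rank $1$ and no $K$ value (or the convention $K=(1,0,\ldots,0)$), is handled separately, since every other tree has rank at least $2$.

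For the second claim, the direction $t_1=t_2\Rightarrow K(t_1)=K(t_2)$ is immediate. For the converse, I will argue by strong induction on rank, using the fact (from Maranca \& Rosenberg) that $f$ is a bijection. Then $K(t_1)=K(t_2)$ forces the multisets of root-subtree ranks to coincide, so the root subtrees are pairwise equal and hence $t_1=t_2$ as unlabeled trees. Alternatively, $K(t_1)=K(t_2)$ gives $f(t_1)=f(t_2)$, and injectivity of $f$ finishes the argument.

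The main step is strict monotonicity. I will compare a tuple $X$ with a tuple $Y$ whose first difference is at position $j$, with $x_j<y_j$. It suffices to show that the largest tuple sharing the prefix $(x_1,\ldots,x_{j-1},x_j)$ has smaller $g$-value than the smallest tuple with prefix $(x_1,\ldots,x_{j-1},x_j+1)$. Such tuples are not bounded above in general, so I will instead use a counting interpretation. Set $i=k-m+1$ and $y=x_m+i-2$, so that the term $\binom{x_m+k-m-1}{k-m+1}$ is the number of nonincreasing $(k-m+1)$-tuples with entries in $\{1,\ldots,x_m-1\}$ (multisets of size $k-m+1$ from $x_m-1$ symbols). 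Then $g(X)-1$ counts the tuples in $B_k$ that are $<_D X$, plus one. Concretely, a tuple $Z<_D X$ is classified by the first index $m$ where $z_m<x_m$. The count of admissible tails $(z_m,\ldots,z_k)$ with $1\leq z_m<x_m$ and nonincreasing is $\binom{x_m-1+(k-m+1)-1}{k-m+1}=\binom{x_m+k-m-1}{k-m+1}$, matching the summand. Here the constraint $z_m\leq x_{m-1}$ is automatic because $z_m<x_m\leq x_{m-1}$. This shows $g(X)=2+\#\{Z\in B_k: Z<_D X\}$, which is strictly increasing in $<_D$.

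The expected obstacle is verifying this counting identity carefully, in particular the index bookkeeping between $x_{k-i+1}$ and $m$, and the multiset-count formula $\binom{N+r-1}{r}$. I will check it against the $k=3$ listing in the introduction: $(1,1,1)\mapsto 2$ and $(2,1,1)\mapsto 3$.
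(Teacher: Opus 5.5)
Your proof is correct, and it takes a cleaner route than the paper's.

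\textbf{How the paper argues.} The paper never interprets \cref{eq:rank_strictly_k} as a count. It first discards the nonnegative terms of $f(t_2)$ for positions beyond the first discrepancy $x$. It then rewrites $\binom{b_x+k-x-1}{k-x+1}$ as an iterated sum (Lemma 4.1 of Maranca \& Rosenberg). Applying a ``split-and-peel'' step, based on an auxiliary sum-peeling inequality, $k-x$ times shows that this single binomial strictly exceeds $\sum_{m\geq x}\binom{a_m+k-m-1}{k-m+1}$. The converse and the equality clause are handled exactly as you handle them, via totality of $\leq_D$ and injectivity of $f$.

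\textbf{How your argument differs.} You prove the identity $g(X)=2+\#\{Z\in B_k : Z<_D X\}$ by sorting predecessors according to the first index $m$ with $z_m<x_m$ and counting the admissible tails as multisets. Strict monotonicity then reduces to the observation $\{Z<_D X\}\subsetneq\{Z<_D Y\}$ whenever $X<_D Y$. In these terms, the paper's key inequality just says that the nonincreasing tails $(z_x,\ldots,z_k)$ with $z_x<b_x$ include every tail preceding $(a_x,\ldots,a_k)$, together with $(a_x,\ldots,a_k)$ itself.

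\textbf{What your approach buys.} Beyond brevity, it gives an exact statement: \cref{eq:rank_strictly_k} is precisely $2$ plus the number of dictionary predecessors of $K(t)$. So it agrees with the definition of the rank as a position in the dictionary order, whereas the paper's computation extracts only the inequality.

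\textbf{Two minor remarks.}
\begin{enumerate}
\item Your aside that tuples with a fixed prefix are ``not bounded above'' is mistaken. Entries are nonincreasing, so every entry is bounded by $x_1$. The route you abandoned is in fact viable (using coordinatewise monotonicity of $g$ and Pascal's rule), and it is close in spirit to the paper's argument.
\item For the single-leaf tree, you should state explicitly that $(1,0,\ldots,0)<_D X$ for every $X\in B_k$, not only that its rank is the smallest.
\end{enumerate}
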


\subsection{At-most-$k$-furcating trees}

We extend our results from~\cref{subsec:prelims-strictly-k} to at-most-$k$-furcating trees (including the single-leaf tree). Let $T_k^{\ast}$ be the set of at-most-$k$-furcating trees. We denote $f:T_k^{\ast} \to \mathbb{Z}^+$ as the map sending each at-most-$k$-furcating tree to its rank, as in Definition 5.2 of~\cite{Maranca2024}. By Theorem 5.3 of~\cite{Maranca2024}, this map is a bijection. Note that we use the same symbol $f$ as in the map $f:T_k \to \mathbb{Z}^+$ in~\cref{subsec:prelims-strictly-k}; the version of $f$ will be clear from the context. 

For all trees $t$ in $T_k^{\ast}$ other than the single-leaf tree, we define the \textit{canonical order} of $t$ to be the ordering of subtrees $t_1, t_2, \dots, t_k$ at the root such that $f(t_1) \geq f(t_2) \geq \dots \geq f(t_k)$. For any tree $t \in T_k^{\ast}, $ define $K(t) = \big(f(t_1), f(t_2), \ldots, f(t_k)\big)$, where $t_1, t_2, \ldots, t_k$ is the canonical order of the subtrees of $t$ at the root. By definition of the canonical order, $f(t_1) \geq f(t_2) \ldots \geq f(t_k)$. The key difference in the at-most-$k$-furcating case compared to the strictly $k$-furcating case in~\cref{subsec:prelims-strictly-k} is that the at-most-$k$-furcating case allows $f(t_k) = 0$ (i.e., if the subtree $t_k$ has no leaves) for $k \geq 3$. 

By Definition 5.2 of Maranca \& Rosenberg~\cite{Maranca2024}, if $t \in T_k^{\ast}$ and $K(t) = (x_1, x_2, \ldots, x_k)$, then 
\begin{align} 
\label{eq:rank_at_most_k}
f(t) = -x_1 + 1 + \sum_{i=1}^{k} \binom{x_{k-i+1} + i -1}{i}.
\end{align}
We show an analogous result to \cref{lemma:strict_tree_order} for at-most-$k$-furcating trees. The proof is in Appendix~\ref{pf:up_to_k_tree_order}. This lemma confirms that the lexicographic ordering induced by canonical ordering is the same as the ordering induced by the tree rank, and we use it in~\cref{subsec:at_most_k_maximal_rank} and~\cref{subsec:at_most_k_minimal_rank} to compare tree ranks.

\begin{lemma} 
\label{lemma:up_to_k_tree_order}
For two trees $t_1,t_2 \in T_k^{\ast}$, $K(t_1) \leq_D K(t_2)$ if and only if 
$f(t_1) \leq f(t_2)$. Additionally, $K(t_1) = K(t_2)$ if and only if $f(t_1) = f(t_2)$.
\end{lemma}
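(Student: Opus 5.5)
The plan is to show that the rank formula \eqref{eq:rank_at_most_k}, viewed as a function $g(x_1,\ldots,x_k)$ on the admissible tuples, is strictly increasing with respect to $<_D$. Here the admissible tuples are those in $A_k$ with $x_2 \geq 1$. Once strict monotonicity is established, both claims follow at once. Indeed, $<_D$ is a total order on $A_k$, so for admissible $X \neq Y$ exactly one of $X <_D Y$ and $Y <_D X$ holds. Strict monotonicity then gives $g(X) < g(Y)$ or $g(Y) < g(X)$ respectively. Hence $K(t_1) \leq_D K(t_2) \iff f(t_1) \leq f(t_2)$, and $K(t_1) = K(t_2) \iff f(t_1) = f(t_2)$. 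The second statement also follows directly, since $f$ depends only on $K$.

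To prove strict monotonicity I would rewrite $g$ as the number of admissible tuples preceding $X$ in $\leq_D$, plus $2$ to account for the single-leaf tree. This is the content of Definition~5.2 / Theorem~5.3 of~\cite{Maranca2024}, which says $f$ is the position in the dictionary ordering. Monotonicity is then immediate if we are allowed to cite that interpretation.

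To keep the argument self-contained, I would instead verify the identity combinatorially. The term $\binom{x_{k-i+1}+i-1}{i}$ counts non-increasing $i$-tuples with entries in $\{0,\ldots,x_{k-i+1}-1\}$, that is, multisets of size $i$ from $x_{k-i+1}$ values. Summing over $i$ counts the tuples in $A_k$ that are $<_D X$. To see this, split according to the first index $j = k-i+1$ at which such a tuple $Y$ differs from $X$. Then $y_j < x_j$, and $(y_j,\ldots,y_k)$ is an arbitrary non-increasing tuple of length $i$ with entries below $x_j$. The constraint $y_j \leq x_{j-1}$ holds automatically because $y_j < x_j \leq x_{j-1}$.

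The correction terms $-x_1+1$ account for the excluded tuples with $y_2 = 0$. Predecessors of this form are exactly $(y_1,0,\ldots,0)$ with $y_1 < x_1$ (there are $x_1$ of them, including the zero tuple), together with $(x_1,0,\ldots,0)$ itself if that is not $X$. Since admissibility of $X$ means $x_2 \geq 1$, $X \neq (x_1,0,\ldots,0)$, so this last tuple is always a predecessor. That gives $x_1+1$ excluded predecessors, and so the number of admissible predecessors is $\sum_i \binom{x_{k-i+1}+i-1}{i} - x_1 - 1$, which makes $g(X) = 2 + {}$(number of admissible predecessors). Since the count of predecessors in a total order strictly increases along the order, $g$ is strictly increasing. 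Finally, $f(t) = g(K(t))$, and $K(t)$ is always admissible because the root has at least two children.

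The main obstacle is the bookkeeping of the boundary corrections: the $-x_1+1$ and the exact handling of tuples with $y_2 = 0$, especially the edge cases $x_2 = x_1$ or $k = 2$. If the exact constant does not come out cleanly, I would fall back on a direct difference argument. Suppose $X <_D Y$ first differ at index $j$. I would show that $g(Y) - g(X) \geq g(x_1,\ldots,x_{j-1},x_j+1,0,\ldots) - g(X)$ and that the latter is positive. This uses Pascal's rule and the hockey-stick identity to bound the tail sum $\sum_{i<k-j+1}\binom{x_{k-i+1}+i-1}{i}$ of $X$ by $\binom{x_j+k-j}{k-j+1} - \binom{x_j+k-j-1}{k-j+1}$, minus one.
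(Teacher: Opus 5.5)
Your proof is correct, and it takes a genuinely different route from the paper's. The paper never identifies what the rank formula counts. With $x$ the first index at which $K(t_1)=(a_i)$ and $K(t_2)=(b_i)$ differ, it bounds $f(t_2)$ from below by writing $\binom{b_x+k-x}{k-x+1}$ as a nested sum (Lemma 5.1 of Maranca \& Rosenberg). It then repeatedly splits that sum at $a_x, a_{x+1},\ldots$ and discards outer layers with a ``sum-peeling'' inequality, which yields $\binom{b_x+k-x}{k-x+1} > \sum_{i=1}^{k-x+1}\binom{a_{k-i+1}+i-1}{i}$. The case $x=1$ is handled separately because of the $-x_1$ term, and the converse is obtained by contrapositive.

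You instead prove the exact identity $f(t) = 2 + \#\{Y \text{ admissible} : Y <_D K(t)\}$, and your bookkeeping is right. The $i$th binomial counts the predecessors in $A_k$ whose first disagreement with $X$ is at index $k-i+1$. The inadmissible predecessors are exactly $(y_1,0,\ldots,0)$ with $0 \leq y_1 \leq x_1$; the tuple $(x_1,0,\ldots,0)$ precedes $X$ because $x_2 \geq 1$. This accounts precisely for $-x_1+1 = -(x_1+1)+2$, and nothing special happens when $k=2$ or $x_2=x_1$. The multiset counts you use are the same ones underlying the paper's nested-sum identities, but you use them as equalities for a global count rather than as local inequalities. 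That removes the case analysis and the peeling altogether, explains the otherwise opaque correction term $-x_1+1$, and directly recovers the description of the rank as a position in the dictionary ordering. The paper's approach only has the small advantage of not needing the exact constant.

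Two minor remarks. First, your assertion that $K(t)$ is always admissible fails for the single-leaf tree, whose $K$ the paper writes as $(1,0,\ldots,0)$. That case is trivial, since $(1,0,\ldots,0)$ precedes every admissible tuple and its rank is $1<2$. It should still be stated, because the lemma quantifies over all of $T_k^{\ast}$.

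Second, your fallback is not needed, which is just as well. For $j=1$ the comparison tuple $(x_1+1,0,\ldots,0)$ is inadmissible and satisfies $g(x_1+1,0,\ldots,0) = g(x_1,\ldots,x_1)$; for $k=3$, both $(3,0,0)$ and $(2,2,2)$ give $8$. So the claimed positivity fails there, and one would have to compare with $(x_1+1,1,0,\ldots,0)$ instead.
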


\section{Strictly $k$-furcating trees} 
\label{sec:strictly_k}

\subsection{Maximal rank} 
\label{subsec:strictly_k_maximal_rank}

We now identify the strictly $k$-furcating tree that attains maximal rank among trees with a fixed number of leaves and find a recurrence to compute this rank. Unlike for strictly bifurcating trees ($k=2$), for $k \geq 3$, not every positive integer can be the number of leaves of a strictly $k$-furcating tree. 

Denote by $m:T_k \to \mathbb{Z}$ the number of leaves possessed by a strictly $k$-furcating tree. The number of leaves for a tree $t \in T_k$ satisfies $m(t) \equiv 1 \pmod{k-1}$. This result can be seen inductively. A strictly $k$-furcating tree is formed by successive application of a step of replacing a leaf by an internal node with $k$ descendant leaves. This step increases the number of leaves by $k-1$, so that beginning with a single leaf, each step produces a number of leaves congruent to 1 modulo $(k-1)$.

Because the number of leaves possessed by a tree in $T_k$ is congruent to 1 modulo $(k-1)$, we modify the notation from~\cite{Rosenberg2021} for the minimum and maximum. In particular, for $n \geq 1$, we denote by $a_n$ and $b_n$ the minimal and maximal rank, respectively, across trees in $T_k$ with $(n-1)(k-1)+1$ leaves. Similarly, we denote by $z_n$ the tree in $T_k$ with rank $a_n$ and by $Z_n$ the tree in $T_k$ with rank $b_n$. This notation accords with~\cite{Rosenberg2021} for the case of $k=2$.

The strategy is to construct a collection of trees $\{Z_n^{\ast}\}$ and to show via induction that $Z_n = Z_n^{\ast}$. Let $\{Z_n^{\ast}\}_{n\geq 1}$ be the collection of trees defined as follows: 
(i) $Z_1^{\ast}$ is the tree with exactly one leaf; 
(ii) for $n > 1$, $Z_n^{\ast}$ is the tree that has $Z_{n-1}^{\ast}, Z_1^{\ast}, \ldots, Z_1^{\ast}$ as its $k$ subtrees descended from the root. Note that $Z_{n-1}^{\ast}$ has $(n-2)(k-1)+1$ leaves, and $Z_{n}^{\ast}$ has $(n-1)(k-1) + 1$ leaves.

We show that the rank $f(Z_n^{\ast})$ is strictly increasing with $n$. 
\begin{proposition} 
\label{prop:strict_monotonic}
$f(Z_n^{\ast}) > f(Z_{n-1}^{\ast})$ for all $n \geq 2$.
\end{proposition}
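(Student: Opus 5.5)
Since $Z_n^{\ast}$ has root subtrees $Z_{n-1}^{\ast}, Z_1^{\ast}, \ldots, Z_1^{\ast}$, we have $K(Z_n^{\ast}) = (f(Z_{n-1}^{\ast}), 1, \ldots, 1)$. The plan is to write $x = f(Z_{n-1}^{\ast})$ and obtain the closed form
\begin{align*}
f(Z_n^{\ast}) = 2 + \sum_{i=1}^{k-1} \binom{i-1}{i} + \binom{x+k-2}{k}
\end{align*}
from \cref{eq:rank_strictly_k}. We then show that this exceeds $x$ whenever $x \geq 1$ is a rank that actually occurs as some $f(Z_{n-1}^{\ast})$.

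To evaluate the sum, first note which entry each term uses. For $i \leq k-1$ the term uses $x_{k-i+1}$ with $k-i+1 \geq 2$, so that entry equals $1$ and the term is $\binom{i-1}{i} = 0$. The $i=k$ term uses $x_1 = x$. Hence
\begin{align*}
f(Z_n^{\ast}) = 2 + \binom{x+k-2}{k}.
\end{align*}
The claim therefore reduces to the inequality $2 + \binom{x+k-2}{k} > x$.

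We prove this by induction on $n$. The base case is $n=2$: $x = f(Z_1^{\ast}) = 1$, and $f(Z_2^{\ast}) = 2 + \binom{k-1}{k} = 2 > 1$. For the inductive step, we only need $x \geq 2$, which holds because $f(Z_{n-1}^{\ast}) \geq 2$ for $n-1 \geq 2$ (by the formula, every non-single-leaf tree has rank at least $2$).

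The main obstacle is that the inequality is not strong for small $x$. For example, when $x=2$, $\binom{k}{k} = 1$ and $2+1 = 3 > 2$, which holds but only just. It therefore suffices to check the function $g(x) = 2 + \binom{x+k-2}{k} - x$ directly.
\begin{itemize}
\item At $x=2$: $g(2) = 1 > 0$.
\item As $x$ increases by $1$, $\binom{x+k-2}{k}$ increases by $\binom{x+k-2}{k-1}$, which is at least $1$ for $x \geq 2$. Hence $g$ is nondecreasing on $x \geq 2$.
\end{itemize}
Combining these gives $g(x) \geq 1 > 0$ for all $x \geq 2$. This yields $f(Z_n^{\ast}) > f(Z_{n-1}^{\ast})$ for every $n \geq 2$.

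As an alternative, one could compare $K(Z_n^{\ast})$ and $K(Z_{n-1}^{\ast})$ with \cref{lemma:strict_tree_order}. Their first entries are $f(Z_{n-1}^{\ast})$ and $f(Z_{n-2}^{\ast})$, and the inductive hypothesis gives the strict comparison in the first coordinate, hence $K(Z_{n-1}^{\ast}) <_D K(Z_n^{\ast})$. This route avoids the binomial inequality, but it still needs the base case $n=2$ checked separately, since $Z_1^{\ast}$ has no $K$-tuple in the same sense.
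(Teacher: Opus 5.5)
Your proposal is correct and follows the paper's own argument. The paper also computes $K(Z_n^{\ast}) = (f(Z_{n-1}^{\ast}),1,\ldots,1)$, uses \cref{eq:rank_strictly_k} to get $f(Z_n^{\ast}) = 2 + \binom{f(Z_{n-1}^{\ast})+k-2}{k}$, and reduces the claim to the inequality $2+\binom{x+k-2}{k} > x$.

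The one difference is how that inequality is proved. The paper's appendix checks $x \leq 2$ directly and, for $x \geq 3$, shows the product $\prod_{i=1}^{k} \frac{x-2+i}{i}$ exceeds $x-1$. Your Pascal-rule monotonicity argument for $g$ is equally valid and slightly tidier.
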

\begin{proof}
By the recursive definition of $Z_n^{\ast},$ we know that $K(Z_n^{\ast}) = 
\big(f(Z_{n-1}^{\ast}), 1, \ldots, 1\big)$. We have $f(Z_1^{\ast}) = 1$, and by \cref{eq:rank_strictly_k}, for $n > 1$, 
\begin{align*}
f(Z_n^{\ast}) &= 2 + \binom{f(Z_{n-1}^{\ast}) + k - 2}{k} + \sum_{i=1}^{k-1} \binom{1 + i - 2}{i} 
=2 + \binom{f(Z_{n-1}^{\ast}) + k - 2}{k}.
\end{align*}
In Appendix~\ref{sec:proof_binomial_ineq}, we prove that $2 + \binom{x + k - 2}{k} > x$ for all nonnegative integers $x$, implying the desired inequality $f(Z_n^{\ast}) = 2 + \binom{f(Z_{n-1}^{\ast}) + k - 2}{k} > f(Z_{n-1}^{\ast})$.
\end{proof}

\medskip
We use Proposition~\ref{prop:strict_monotonic} and the recursive construction of $Z_n^{\ast}$ to show that $Z_n = Z_n^{\ast}$, demonstrating that the tree with maximal rank among strictly $k$-furcating trees with $(n-1)(k-1) + 1$ leaves is exactly $Z_n^{\ast}.$
\begin{theorem}
For $n \geq 1$, the strictly $k$-furcating tree $Z_n$ with maximal rank is $Z_n=Z_n^{\ast}$. 
\label{thm:strict_max_equals_ast}
\end{theorem}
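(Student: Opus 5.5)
We argue by strong induction on $n$. For $n=1$ the only tree with one leaf is $Z_1^{\ast}$, so $Z_1 = Z_1^{\ast}$. For $n=2$, the only strictly $k$-furcating tree with $k$ leaves is the star, which is $Z_2^{\ast}$. Now fix $n \geq 3$ and assume $Z_j = Z_j^{\ast}$ for all $j < n$. In particular, for each $j<n$, $Z_j^{\ast}$ has the largest rank among trees with $(j-1)(k-1)+1$ leaves. By Proposition~\ref{prop:strict_monotonic}, $f(Z_1^{\ast}) < f(Z_2^{\ast}) < \cdots < f(Z_{n-1}^{\ast})$. Together these show that every tree with at most $(n-2)(k-1)+1$ leaves has rank at most $f(Z_{n-1}^{\ast})$. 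Moreover, equality holds only for the tree $Z_{n-1}^{\ast}$ itself, since $f$ is injective by Lemma~\ref{lemma:strict_tree_order}.

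Let $t$ be any tree with $(n-1)(k-1)+1$ leaves and canonical subtrees $t_1,\ldots,t_k$. By Lemma~\ref{lemma:strict_tree_order}, it suffices to show $K(t) \leq_D K(Z_n^{\ast}) = \big(f(Z_{n-1}^{\ast}),1,\ldots,1\big)$, with equality only when $t = Z_n^{\ast}$.

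Each subtree has $(j_i-1)(k-1)+1$ leaves for some $j_i \geq 1$. The leaf counts satisfy $\sum_i \big[(j_i-1)(k-1)+1\big] = (n-1)(k-1)+1$, which gives $\sum_i (j_i - 1) = n-2$. Hence every $j_i \leq n-1$, and by the previous paragraph $f(t_1) \leq f(Z_{n-1}^{\ast})$. If $f(t_1) < f(Z_{n-1}^{\ast})$, then $K(t) <_D K(Z_n^{\ast})$ already at the first coordinate. If instead $f(t_1) = f(Z_{n-1}^{\ast})$, then $t_1 = Z_{n-1}^{\ast}$, so $j_1 = n-1$. The sum constraint then forces $j_i = 1$ for all $i \geq 2$, so every other subtree is the single leaf and $t = Z_n^{\ast}$. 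In all cases $f(t) \leq f(Z_n^{\ast})$, with equality only for $t = Z_n^{\ast}$, which gives $Z_n = Z_n^{\ast}$.

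The main obstacle is the claim that the maximal first-coordinate rank $f(Z_{n-1}^{\ast})$ can be attained only by $Z_{n-1}^{\ast}$. A subtree with fewer leaves cannot tie, because the monotonicity in Proposition~\ref{prop:strict_monotonic} is strict. Tying in rank means being the same tree, by injectivity of $f$. Both facts are exactly what the inductive hypothesis and the earlier results supply, so once the leaf-count bookkeeping above is in place the argument is routine.
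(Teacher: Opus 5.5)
Your proof is correct and follows essentially the same route as the paper: strong induction, the leaf-count constraint $\sum_i (j_i-1)=n-2$, strict monotonicity of $f(Z_n^{\ast})$ (Proposition~\ref{prop:strict_monotonic}), and a first-coordinate lexicographic comparison via Lemma~\ref{lemma:strict_tree_order}. The only difference is presentational. The paper works with $Z_n$ itself, replacing its largest subtree by the maximal tree on the same number of leaves and squeezing $K(Z_n)$ against $K(Z_n^{\ast})$ from both sides. You instead bound an arbitrary $t$, and you make explicit the equality case that the paper leaves implicit.
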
 
\begin{proof}
We induct on $n$. The base case $n=1$ is trivial, as only one tree has a single leaf: $Z_1 = Z_1^{\ast}$. 

For the inductive hypothesis, suppose that $Z_\ell = Z_\ell^{\ast}$ for each $\ell$ with $1 \leq \ell \leq n-1$. In the tree $Z_n$, let $j_1, j_2, \ldots, j_k$ be the subtrees, in canonical order. The idea is to substitute the leftmost tree $j_1$ with the tree $Z_{[(m(j_1) + k - 2]/(k - 1)}$ that attains the maximal rank on $m(j_1)$ leaves. Trees $j_1$ and $Z_{[(m(j_1) + k - 2]/(k - 1)}$ have the same number of leaves, because $m(Z_{[(m(j_1) + k - 2]/(k - 1)}) = (k-1)(\frac{m(j_1) + k - 2}{k - 1}-1)+1 = m(j_1)$. The substitution does not break the canonical order, because the rank of $Z_{[(m(j_1) + k - 2]/(k - 1)}$ is at least as large as that of $j_1$. It produces the following chain of inequalities:
\begin{align*}
K(Z_n) = \Big(f(j_1), f(j_2), \ldots, f(j_k)\Big) & \leq_D \Big(f(Z_{\frac{m(j_1)+k-2}{k-1}}), f(j_2), \ldots, f(j_k)\Big) \\
& = \Big( f(Z^{\ast}_{\frac{m(j_1)+k-2}{k-1}}), f(j_2), \ldots, f(j_k)\Big) \\ 
& \leq_D \Big( f(Z^{\ast}_{n-1}), f(Z^{\ast}_{1}), \ldots, f(Z^{\ast}_{1}) \Big) \\
& = K(Z_n^{\ast}).
\end{align*}

The second equality follows from the inductive hypothesis. The last inequality follows from~\cref{prop:strict_monotonic} and the fact that $\frac{m(j_1) + k -2}{k-1} \leq n-1$. By definition of $Z_n$, $f(Z_n) \geq f(Z_n^{\ast})$, so by~\cref{lemma:strict_tree_order}, $K(Z_n) \geq_D K(Z_n^{\ast})$. Therefore, $K(Z_n) = K(Z_n^{\ast})$, which by~\cref{lemma:strict_tree_order} means that $f(Z_n) = f(Z_n^{\ast})$, so that $Z_n = Z_n^{\ast}$ because $f$ is a bijection.
\end{proof}

\medskip
With~\cref{thm:strict_max_equals_ast} in hand, we have obtained the maximal-rank tree with $(n-1)(k-1)+1$ leaves. For its rank, because $Z_n = Z_n^{\ast}$ and $f(Z_n^{\ast}) = 2 + \binom{f(Z_{n-1}^{\ast}) + k - 2}{k}$ for $n > 1$, we conclude that 
\begin{align} 
\label{eq:recurrence_bn}
b_n = 2 + \binom{b_{n-1}+k-2}{k},
\end{align}
with $b_1=1$. Note that setting $k=2$ yields Theorem 9 of~\cite{Rosenberg2021}. Structurally, $Z_n$ is a generalized version of the caterpillar tree, which also agrees with Corollary 10 of~\cite{Rosenberg2021}, stating that the caterpillar has maximal rank in the bifurcating case. For $k=3$, the trees $Z_n$ appear in Table~\ref{tab:Zn-zn-trees} for small values of $n$. The growth of $\{b_n\}$ is fast; for small $k$, the first several terms appear in Table~\ref{tab:bn}. 

\begin{table}[tb]
\centering
\begin{tabular}[H]{|c|c|c|T{2cm}|c|c|T{2cm}|} \hline
    & \multicolumn{3}{|c|}{Trees of maximal rank ($Z_n)$} & \multicolumn{3}{|c|}{Trees of minimal rank $(z_n)$} \\ \hline 
$n$ & $f(Z_n)$ & $K(Z_n)$ & $Z_n$ & $f(z_n)$ & $K(z_n)$ & $z_n$ \\ \hline
$1$ & $1$ & $(1,0,0)$ & \begin{forest}[[]]\end{forest} &  $1$ & $(1,0,0)$ & \begin{forest}[[]]\end{forest} \\
$2$ & $2$ & $(1,1,1)$ & \begin{forest}[[][][]]\end{forest} &  $2$ &  $(1,1,1)$ & \begin{forest}[[][][]]\end{forest}\\
$3$ & $3$ & $(2,1,1)$ & \begin{forest}[[[][][]][][]]\end{forest} & $3$ & $(2,1,1)$ & \begin{forest}[[[][][]][][]]\end{forest}\\
$4$ & $6$ & $(3,1,1)$ & \begin{forest}[[[[][][]][][]][][]]\end{forest} &  $4$ & $(2,2,1)$ & \begin{forest}[[[][][]][[][][]][]] \end{forest}\\
$5$ & $37$ & $(6,1,1)$ & \begin{forest}[[[[[][][]][][]][][]][][]]\end{forest} &  $5$ & $(2,2,2)$ & \begin{forest}[[[][][]][[][][]][[][][]]][]]\end{forest}\\
$6$ & $8438$ & $(37,1,1)$ & \begin{forest}[[[[[[][][]][][]][][]][][]][][]]\end{forest} & $8$ & $(3,2, 2)$ & \begin{forest} [[[[][][]][][]][[][][]][[][][]]]\end{forest}\\
$7$ & $100130712541$ & $(8438,1,1)$ & \begin{forest}[[[[[[[][][]][][]][][]][][]][][]][][]]\end{forest} & $10$ & $(3,3, 2)$ & \begin{forest} [[[[][][]][][]][[[][][]][][]][[][][]]]\end{forest} \\
\hline 
\end{tabular}
\caption{Maximal-rank and minimal-rank strictly trifurcating trees with $(n-1)(3-1)+1=2n-1$ leaves. For maximal-rank trifurcating tree $Z_n$, the rank $f(Z_n)$ and the list $K(Z_n)$ of ranks of the subtrees are shown. The trees $Z_n$ follow a generalized caterpillar shape. The minimal-rank trifurcating tree $z_n$ and its associated rank $f(z_n)$ and list $K(z_n)$ of ranks of its subtrees are also shown. The trees $z_n$ are highly balanced. We write $K(t)=(1,0,0)$ for the tree with 1 leaf.}
\label{tab:Zn-zn-trees}
\end{table}

\subsection{Minimal rank} 
\label{subsec:strictly_k_minimal_rank}

Next, we identify the strictly $k$-furcating trees with minimal rank among trees with $(n-1)(k-1)+1$ leaves. We employ a similar strategy: we construct trees $\{z_n^{\ast}\}_{n=1}^{\infty}$ and use the properties of $z_n^{\ast}$ and $z_n$ to prove that $z_n = z_n^{\ast}$. The construction proceeds as follows: 
(i) $z_1^{\ast}$ is the tree with one leaf. 
(ii) For $n > 1$, $z_{n}^{\ast}$ is the tree whose leftmost $n-2+k-k\lceil \frac{n-2}{k} \rceil$ subtrees are $z_{\lceil (n-2)/k \rceil + 1}^{\ast}$, and whose rightmost $-n+2 + k \lceil \frac{n-2}{k} \rceil$ subtrees are $z_{\lceil (n-2)/k \rceil}^{\ast}.$

We first verify that $z_n^{\ast}$ has $(n-1)(k-1)+1$ leaves, as it is not obvious from the definition. 
\begin{lemma}
$m(z_n^{\ast}) = (n-1)(k-1) + 1$ for $n \geq 1$.
\end{lemma}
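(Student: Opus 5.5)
We will prove the statement by strong induction on $n$. It is convenient to write $q = \lceil (n-2)/k \rceil$ and $r = n-2+k-kq$, so that $z_n^{\ast}$ has $r$ subtrees equal to $z_{q+1}^{\ast}$ and $k-r$ subtrees equal to $z_q^{\ast}$.

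The base case $n=1$ holds trivially, since $z_1^{\ast}$ is the single leaf and $(1-1)(k-1)+1 = 1$.

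For $n \geq 2$, we first check that the construction makes sense. Because $kq \geq n-2 > k(q-1)$, we have $1 \leq r \leq k$. Hence both multiplicities $r$ and $k-r$ are nonnegative and they sum to $k$.

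We next check that the subtree indices are legitimate and smaller than $n$, so that induction applies. When $n=2$ we get $q=0$ and $r=k$, so all $k$ subtrees are $z_1^{\ast}$, and the index $q=0$ never actually occurs. This case must be noted separately, since $z_0^{\ast}$ is undefined. For $n \geq 3$ we have $q \geq 1$. Also $q+1 \leq n-1$, because $\lceil (n-2)/k \rceil \leq n-2$ whenever $n \geq 3$.

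By the inductive hypothesis, $m(z_\ell^{\ast}) = (\ell-1)(k-1)+1$ for all $\ell < n$. The leaf count of $z_n^{\ast}$ is therefore
\begin{align*}
m(z_n^{\ast}) &= r\big(q(k-1)+1\big) + (k-r)\big((q-1)(k-1)+1\big) \\
&= k(q-1)(k-1) + r(k-1) + k.
\end{align*}
Substituting $r = n-2+k-kq$ gives
\begin{align*}
m(z_n^{\ast}) &= (k-1)\big(kq - k + n - 2 + k - kq\big) + k \\
&= (k-1)(n-2) + k \\
&= (n-1)(k-1) + 1,
\end{align*}
as required. The same formula also covers $n=2$ directly: there it gives $k \cdot 1 = k$ leaves, which equals $(2-1)(k-1)+1$.

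The only delicate step is the bookkeeping with the ceiling, namely confirming that $1 \leq r \leq k$ and that the $n=2$ case, where $q=0$, does not invoke the undefined tree $z_0^{\ast}$. Once that is settled, the remainder is the linear algebra above.
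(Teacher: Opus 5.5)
Your proof is correct and is essentially the paper's argument: strong induction, substituting the inductive leaf counts $q(k-1)+1$ and $(q-1)(k-1)+1$ for $z^{\ast}_{q+1}$ and $z^{\ast}_{q}$, and simplifying the weighted sum to $(n-2)(k-1)+k$. Your extra checks are bookkeeping that the paper leaves implicit: that $1 \leq r \leq k$, that the indices $q$ and $q+1$ lie in $[1,n-1]$ for $n \geq 3$, and that the case $n=2$ never invokes the undefined $z_0^{\ast}$.
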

\begin{proof}
We proceed by induction. In the base case, $m(z_1^{\ast})=(1-1)(k-1)+1 = 1$. For the inductive hypothesis, assume that $z_j^{\ast}$ has $(j-1)(k-1)+1$ leaves for all $j$, $1 \leq j < n$. Now count the leaves of $z_n^{\ast}$ using the recursive definition and the inductive hypothesis: 
\begin{align*}
m(z_n^{\ast}) &= \bigg(n-2+k - k\bigg\lceil \frac{n-2}{k} \bigg\rceil\bigg) m(z^{\ast}_{\lceil \frac{n-2}{k} \rceil + 1}) + \bigg(-n+2+ k\bigg\lceil \frac{n-2}{k} \bigg \rceil \bigg) m(z^{\ast}_{\lceil \frac{n-2}{k} \rceil}) \\ 
&= \bigg(n-2+k - k\bigg\lceil \frac{n-2}{k} \bigg\rceil\bigg) \bigg(\bigg\lceil \frac{n-2}{k} \bigg\rceil (k-1) + 1 \bigg) \\ 
& \quad + \bigg(-n+2+k \bigg\lceil \frac{n-2}{k} \bigg\rceil \bigg) \bigg[ \bigg(\bigg\lceil \frac{n-2}{k} \bigg\rceil - 1\bigg)(k-1) + 1 \bigg] \\ 
&= (n-2)(k-1) + k = (n-1)(k-1) + 1.
\end{align*}
The proof is complete. 
\end{proof}

\medskip
We next show that $f(z_n^{\ast})$ strictly increases with $n$.
\begin{proposition}
\label{prop:strict_min_increasing}
For $n \geq 2$, $f(z_n^{\ast}) > f(z_{n-1}^{\ast})$.
\end{proposition}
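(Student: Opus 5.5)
We argue by strong induction on $n$, comparing $K(z_n^{\ast})$ with $K(z_{n-1}^{\ast})$ in the order $\leq_D$ and then invoking \cref{lemma:strict_tree_order}. Write $q = \lceil (n-2)/k \rceil$ and $r = n-2+k-kq$, so that $1 \leq r \leq k$. By construction, $K(z_n^{\ast})$ consists of $r$ entries equal to $f(z^{\ast}_{q+1})$ followed by $k-r$ entries equal to $f(z^{\ast}_q)$. These entries are in canonical (nonincreasing) order by the inductive hypothesis applied to indices below $n$; when $q=0$, i.e.\ $n=2$, we have $r=k$ and the question does not arise.

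The base cases are direct. We have $f(z_1^{\ast})=1$, and $z_2^{\ast}$ is the star with $K=(1,\ldots,1)$, so \cref{eq:rank_strictly_k} gives $f(z_2^{\ast}) = 2 + \sum_i \binom{i-1}{i} = 2 > 1$. More generally, any tree other than the single leaf has rank at least $2$, so the case $n-1=1$ is immediate. For $n \geq 3$, both $z_n^{\ast}$ and $z_{n-1}^{\ast}$ are nontrivial and we compare their $K$-tuples. We split into two cases according to how the parameters change from $n-1$ to $n$.

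\emph{Case 1: $(n-2)/k$ is not an integer (equivalently $r<k$ is replaced, when passing from $n-1$ to $n$, within the same $q$).} Then $n-1$ has the same $q$ and $r-1 \geq 1$. So $K(z_{n-1}^{\ast})$ has $r-1$ entries $f(z^{\ast}_{q+1})$ and $k-r+1$ entries $f(z^{\ast}_q)$, while $K(z_n^{\ast})$ has one more leading entry $f(z^{\ast}_{q+1})$. The two tuples first differ at position $r$, where $f(z^{\ast}_{q+1}) > f(z^{\ast}_q)$ by the inductive hypothesis, since $q+1 < n$. Hence $K(z_{n-1}^{\ast}) <_D K(z_n^{\ast})$.

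\emph{Case 2: $k \mid (n-2)$ and $n \geq 3$.} Here $n$ has $q$ and $r=k$, so $K(z_n^{\ast}) = (f(z^{\ast}_{q+1}),\ldots,f(z^{\ast}_{q+1}))$. Meanwhile $n-1$ has $\lceil (n-3)/k\rceil = q$ and $r' = n-3+k-kq = k-1$. Thus $K(z_{n-1}^{\ast})$ has $k-1$ copies of $f(z^{\ast}_{q+1})$ and a final entry $f(z^{\ast}_q)$. Again the tuples first differ at position $k$, where the inductive hypothesis gives strict increase. The transition where $q$ increments (from $r=k$ at $n-1$ to $r=1$ with $q+1$ at $n$) is of the Case 1 type, with the leading entry jumping from $f(z^{\ast}_{q+1})$ to $f(z^{\ast}_{q+2})$. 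I will handle it explicitly: position $1$ already differs there, with $f(z^{\ast}_{q+2}) > f(z^{\ast}_{q+1})$ by induction since $q+2 \leq n-1$ for $n\geq 3$.

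In all cases $K(z_{n-1}^{\ast}) <_D K(z_n^{\ast})$, so \cref{lemma:strict_tree_order} yields $f(z_{n-1}^{\ast}) < f(z_n^{\ast})$ (strictness follows from the injectivity part). The main obstacle is bookkeeping rather than substance. I need to carefully verify the three parameter transitions — $r \to r+1$ within fixed $q$, $r=k-1 \to r=k$, and the wrap $r=k \to r=1$ with $q \mapsto q+1$ — and check that every index invoked in the induction is strictly less than $n$ (namely $q+1 \le n-1$ and $q+2\le n-1$ in the wrap case, which hold for $n\ge 3$ since $k\ge2$).
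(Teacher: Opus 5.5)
Your proof is correct and follows the paper's argument. The paper splits on whether $\lceil (n-2)/k\rceil=\lceil (n-3)/k\rceil$, which is your Cases 1 and 2 together with first difference at position $r$, or whether the ceiling increments, i.e.\ $k \mid (n-3)$, which is your ``wrap'' with first difference at position 1; it then concludes via \cref{lemma:strict_tree_order}. One bookkeeping slip: your Case 1 hypothesis ``$(n-2)/k$ is not an integer'' does not guarantee that $q$ is unchanged (e.g.\ at $n=3$, where $r=1$), so that case should assume both $k \nmid (n-2)$ and $k \nmid (n-3)$, a correction your separate treatment of the wrap already covers.
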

\begin{proof}
\sloppy
We induct on $n$. In the base case of $n=2$, 
$$K(z_2^{\ast}) = \Big(f(z_1^{\ast}), f(z_1^{\ast}), \overbrace{f(z_1^{\ast}), \ldots, f(z_1^{\ast})}^{k-2 \text{ times}}\Big) = (1,1,\overbrace{1, \ldots, 1}^{k-2 \text{ times}}).$$ By~\cref{eq:rank_strictly_k}, $f(z_2^{\ast}) = 2 > 1 = f(z_1^{\ast})$.

For the inductive hypothesis, suppose that $f(z_k^{\ast}) > f(z_{k-1}^{\ast})$ for all $k$ with $2 \leq k < n$. It then follows that 
\begin{align} 
\label{eq:Kzn}
K(z_n^{\ast}) &= \left(\overbrace{f(z_{\lceil \frac{n-2}{k} \rceil + 1}^{\ast}), \ldots, f(z_{\lceil \frac{n-2}{k} \rceil + 1}^{\ast})}^{n-2+k-k\lceil \frac{n-2}{k} \rceil \text{ times}}, \overbrace{f(z_{\lceil \frac{n-2}{k} \rceil }^{\ast}), \ldots, f(z_{\lceil \frac{n-2}{k} \rceil }^{\ast})}^{-n+2+k\lceil \frac{n-2}{k} \rceil \text{ times}} \right), \\
\label{eq:Kznminus1}
K(z_{n-1}^{\ast}) &= \left(\overbrace{f(z_{\lceil \frac{n-3}{k} \rceil + 1}^{\ast}), \ldots, f(z_{\lceil \frac{n-3}{k} \rceil + 1}^{\ast})}^{n-3+k-k\lceil \frac{n-3}{k} \rceil \text{ times}}, \overbrace{f(z_{\lceil \frac{n-3}{k} \rceil }^{\ast}), \ldots, f(z_{\lceil \frac{n-3}{k} \rceil }^{\ast})}^{-n+3+k\lceil \frac{n-3}{k} \rceil \text{ times}} \right).
\end{align}
We have two cases: (i) $\lceil \frac{n-2}{k} \rceil = \lceil \frac{n-3}{k} \rceil$; (ii) $\lceil \frac{n-2}{k} \rceil > \lceil \frac{n-3}{k} \rceil$. Observe that in case (ii), $k$ divides $n-3$, but in case (i), $k$ does not divide $n-3$.

\begin{table}[tb]
\centering
\small
\begin{tabular}{|c | c | c | c | c|}
\hline
    & \multicolumn{4}{c|}{$k$} \\ \cline{2-5}
$n$ &    2 &    3  &  4 &   5 \\ \hline
1   &    1 &    1  &  1 &   1 \\ 
2   &    2 &    2  &  2 &   2 \\ 
3   &    3 &    3  &  3 &   3 \\ 
4   &    5 &    6  &  7 &   8 \\ 
5   &   12 &   37 & 128 & 464 \\ 
6   &   68 & 8438 & 11358882 & 181164656830 \\ 
7   & 2280 & 100130712541 & 693635299649817827360747003 & 1626245591794207834538411826112599548105390018639492498  \\ 
\hline
\end{tabular}
\vspace{-.1cm}
\caption{The maximal rank $b_n$ of strictly $k$-furcating trees with $(n-1)(k-1)+1$ leaves, as obtained by~\cref{eq:recurrence_bn}. For $k=2$, the values of $b_n$ follow sequence A108225 in the On-Line Encyclopedia of Integer Sequences.}
\label{tab:bn}
\end{table}

\textit{Case (i): $\lceil \frac{n-2}{k} \rceil = \lceil \frac{n-3}{k} \rceil$}. In this case, we have 
\begin{align*}
K(z_n^{\ast}) &= \left(\overbrace{f(z_{\lceil \frac{n-2}{k} \rceil + 1}^{\ast}), \ldots, f(z_{\lceil \frac{n-2}{k} \rceil + 1}^{\ast})}^{n-2+k-k\lceil \frac{n-2}{k} \rceil \text{ times}}, \overbrace{f(z_{\lceil \frac{n-2}{k} \rceil }^{\ast}), \ldots, f(z_{\lceil \frac{n-2}{k} \rceil }^{\ast})}^{-n+2+k\lceil \frac{n-2}{k} \rceil \text{ times}} \right), \\
K(z_{n-1}^{\ast}) &= \left(\overbrace{f(z_{\lceil \frac{n-2}{k} \rceil + 1}^{\ast}), \ldots, f(z_{\lceil \frac{n-2}{k} \rceil + 1}^{\ast})}^{n-3+k-k\lceil \frac{n-2}{k} \rceil \text{ times}}, \overbrace{f(z_{\lceil \frac{n-2}{k} \rceil }^{\ast}), \ldots, f(z_{\lceil \frac{n-2}{k} \rceil }^{\ast})}^{-n+3+k\lceil \frac{n-2}{k} \rceil \text{ times}} \right).
\end{align*}
Because $n-3+k-k\lceil \frac{n-2}{k} \rceil=n-3+k-k\lceil \frac{n-3}{k} \rceil > n-3+k-k[1 + \frac{n-3}{k}] = 0$, at least one $f(z^{\ast}_{\lceil (n-2)/k \rceil+1})$ term appears in both $K(z_n^{\ast})$ and $K(z_{n-1}^{\ast})$. Compared to $K(z_{n-1}^*)$, $K(z_n^*)$ contains one additional term $f(z_{\lceil (n-2)/k \rceil + 1}^{\ast})$; $K(z_{n-1}^*)$ has one additional term $f(z_{\lceil (n-2)/k \rceil}^{\ast})$ relative to $K(z_n^*)$. Because $\lceil \frac{n-2}{k} \rceil + 1< n$ for $n \geq 2$ and $k \geq 2$, the inductive hypothesis implies that $f(z_{\lceil (n-2)/k \rceil+1}^{\ast}) > f(z_{\lceil (n-2)/k \rceil}^{\ast})$. The additional term $f(z_{\lceil (n-2)/k \rceil + 1}^{\ast})$ for $K(z_n^*)$ has the consequence that $K(z_{n}^{\ast})$ is lexicographically greater than $K(z_{n-1}^{\ast})$, $K(z_{n}^{\ast}) >_D K(z_{n-1}^{\ast})$. By~\cref{lemma:strict_tree_order}, we conclude $f(z_n^{\ast}) > f(z_{n-1}^{\ast})$.

\textit{Case (ii): $\lceil \frac{n-2}{k} \rceil > \lceil \frac{n-3}{k} \rceil$}. As in case (i), we note that $\lceil \frac{n-2}{k} \rceil + 1< n$ for $n \geq 2$ and $k \geq 2$. Therefore, by the inductive hypothesis, $f(z_{\lceil (n-2)/k \rceil+1}^{\ast}) > f(z_{\lceil (n-3)/k \rceil + 1}^{\ast})$. We immediately see that $K(z_n^*)$ (\cref{eq:Kzn}) is lexicographically greater than $K(z_{n-1}^*)$ (\cref{eq:Kznminus1}), $K(z_n^*) >_D K(z_{n-1}^*)$, which implies that $f(z_n^{\ast}) > f(z_{n-1}^{\ast})$ by~\cref{lemma:strict_tree_order}. 
\end{proof}

\medskip 

Equipped with~\cref{prop:strict_min_increasing}, we now prove that $z_n = z_n^{\ast}$. In other words, the $k$-furcating tree with minimal rank on $(n-1)(k-1)+1$ leaves is exactly $z_n^{\ast}$. We defer the proof to Appendix~\ref{pf:strictly_min_equals_ast}.
\begin{theorem} 
For $n\geq 1$, the strictly $k$-furcating tree $z_n$ with minimal rank is $z_n = z_n^{\ast}$.
\label{thm:strictly_min_equals_ast} 
\end{theorem}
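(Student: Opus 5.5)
We argue by strong induction on $n$, in the same style as the proof of \cref{thm:strict_max_equals_ast}. The cases $n=1$ and $n=2$ are trivial, since exactly one tree exists with $1$ leaf and with $k$ leaves. Fix $n\geq 3$ and assume $z_\ell=z_\ell^{\ast}$ for all $\ell<n$. By \cref{prop:strict_min_increasing}, together with the inductive hypothesis, the sequence $a_\ell=f(z_\ell^{\ast})$ is strictly increasing for $\ell<n$.

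Let the canonically ordered subtrees of $z_n$ be $j_1,\dots,j_k$, and write $m(j_i)=(n_i-1)(k-1)+1$. Counting leaves gives $\sum_{i}(n_i-1)=n-2$. Two reductions follow.

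\begin{enumerate}[label=(\alph*)]
\item Each subtree can be replaced by $z_{n_i}=z_{n_i}^{\ast}$, which has the same number of leaves and rank at most $f(j_i)$. After re-sorting, the resulting $K$ is $\leq_D K(z_n)$. This uses the elementary fact that decreasing entries of a tuple and re-sorting it into nonincreasing order cannot increase it in $\leq_D$. By minimality of $z_n$ and \cref{lemma:strict_tree_order}, all these replacements must be equalities. Hence every $j_i=z_{n_i}^{\ast}$, so $K(z_n)=(a_{n_1},\dots,a_{n_k})$ with $n_1\geq\dots\geq n_k$, using monotonicity of $a_\ell$.
\item Among all tuples $(n_1,\dots,n_k)$ of nonincreasing positive integers with $\sum(n_i-1)=n-2$, we must identify which minimizes $(a_{n_1},\dots,a_{n_k})$ in $\leq_D$. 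Since $a$ is strictly increasing, this is the same as minimizing $(n_1,\dots,n_k)$ lexicographically.
\end{enumerate}

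The key combinatorial step is this minimization. Set $q=\lceil (n-2)/k\rceil$, and note $n_1-1\geq q$ because the largest of $k$ parts summing to $n-2$ is at least the average. I will show that the lexicographically minimal partition is the most balanced one. Its parts $n_i-1$ all lie in $\{q-1,q\}$, with exactly $n-2-k(q-1)=n-2+k-kq$ of them equal to $q$. This is precisely the subtree multiset of $z_n^{\ast}$: the parts $n_i=q+1$ and $n_i=q$ are the subtrees $z^{\ast}_{q+1}$ and $z^{\ast}_{q}$ in the stated multiplicities.

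The minimization itself goes by an exchange argument. Suppose some pair has $n_i-n_j\geq 2$, and consider the smallest index $i$ with $n_i-1>q$ or, failing that, the first position where the balanced tuple differs. Then moving one unit from a largest part to a smallest part and re-sorting yields a lexicographically smaller tuple with the same sum. Alternatively, the claim can be proved directly: $n_1\geq q+1$ is forced, so the first coordinate is minimized only when $n_1=q+1$. Given that, every part is at most $q+1$, and the number of parts equal to $q+1$ is minimized exactly when all remaining parts equal $q$. That count is at least $n-2-k(q-1)$, since otherwise the sum falls short. So the lexicographic minimum is the balanced tuple.

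Combining the two reductions, $K(z_n)\geq_D K(z_n^{\ast})$ would contradict minimality unless the two are equal. Therefore $K(z_n)=K(z_n^{\ast})$, and \cref{lemma:strict_tree_order} gives $z_n=z_n^{\ast}$.

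I expect the main obstacle to be making the lexicographic minimization in step (b) airtight. In particular, the lexicographic comparison must be stated via counts of the largest parts, and the monotonicity of $a_\ell$ on indices $\leq q+1<n$ must be justified; this monotonicity comes from \cref{prop:strict_min_increasing}.
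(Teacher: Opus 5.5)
Your proof is correct and is essentially the paper's argument: the same replacement step forces every subtree to be $z^{\ast}_{n_i}$, and the monotonicity of $f(z^{\ast}_\ell)$ reduces the comparison to leaf-count tuples. Your direct characterization of the balanced tuple as the lexicographic minimum (via $n_1 \geq q+1$ and the lower bound on the number of parts equal to $q+1$) is the contrapositive of the paper's two-case argument that a lexicographically smaller $K(z_n)$ would leave $z_n$ with too few leaves.
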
 

We complete this section by explicitly computing $f(z_n^{\ast})$. Beginning from \cref{eq:Kzn}, by~\cref{eq:rank_strictly_k} and the ``hockey-stick identity,'' $\sum_{i=0}^{n-r} \binom{i+r}{i} = \binom{n+1}{n-r}$ for $n \geq r$~\cite[eq.~5.9]{GrahamEtAl94},
\begin{align*}
f(z_n^{\ast}) &= 2 + \sum_{i=1}^{-n+2+k \lceil \frac{n-2}{k} \rceil} \binom{f(z_{\lceil \frac{n-2}{k} \rceil}^{\ast})+i-2}{i}+\sum_{i=-n+3+k \lceil \frac{n-2}{k} \rceil }^{k} \binom{f(z_{\lceil \frac{n-2}{k} \rceil+1}^{\ast})+i-2}{i} \\ 
&= 2 + \bigg[\binom{f(z_{\lceil \frac{n-2}{k} \rceil}^{\ast}) + 1 - n + k \lceil \frac{n-2}{k} \rceil}{-n+2+ k \lceil \frac{n-2}{k} \rceil} - 1\bigg] \\ 
& \qquad + \sum_{i=1}^{k} \binom{f(z_{\lceil \frac{n-2}{k} \rceil+1}^{\ast})+i-2}{i} - \sum_{i=1}^{-n+2+k \lceil \frac{n-2}{k} \rceil} \binom{f(z_{\lceil \frac{n-2}{k} \rceil+1}^{\ast})+i-2}{i} \\
&= 1 + \binom{f(z_{\lceil \frac{n-2}{k} \rceil}^{\ast}) + 1 - n + k \lceil \frac{n-2}{k} \rceil}{-n+2+ k \lceil \frac{n-2}{k} \rceil} + \binom{f(z_{\lceil \frac{n-2}{k} \rceil + 1}^{\ast}) +k-1}{k} \\ 
& \qquad - \binom{f(z_{\lceil \frac{n-2}{k} \rceil + 1}^{\ast}) + 1 - n + k \lceil \frac{n-2}{k} \rceil}{-n+2+ k \lceil \frac{n-2}{k} \rceil}.
\end{align*}
Because $a_n = f(z_n) = f(z_n^{\ast})$, we have the recurrence
\begin{align} 
\label{eq:recurrence_an}
a_n = 1 + \binom{a_{\lceil \frac{n-2}{k} \rceil} + 1 - n + k \lceil \frac{n-2}{k} \rceil}{-n+2+ k \lceil \frac{n-2}{k} \rceil} + \binom{a_{\lceil \frac{n-2}{k} \rceil + 1} +k-1}{k} - \binom{a_{\lceil \frac{n-2}{k} \rceil + 1} + 1 - n + k \lceil \frac{n-2}{k} \rceil}{-n+2+ k \lceil \frac{n-2}{k} \rceil},
\end{align}
with $a_1=1$ and $a_2=2$. This recurrence yields~\cite[Theorem 6]{Rosenberg2021} for $k=2$. Note that if $k$ evenly divides $n - 2$, $k | (n-2)$, then $-n+2+k \lceil \frac{n-2}{k} \rceil = 0,$ so that the first and third binomial terms cancel, and 
\begin{equation}
a_n = 1 + \binom{a_{\frac{n-2}{k} + 1} + k - 1}{k}.
\label{eq:cancel}
\end{equation}

For $k=3$, the trees $z_n$ appear in Table~\ref{tab:Zn-zn-trees} for small values of $n$. The first several terms of $\{a_n\}$ for small values of $k$ appear in~\cref{tab:an}. The growth of $\{a_n\}$ appears to be far slower than that of $\{b_n\}$. 

\begin{table}[tb]
\centering
\small
\begin{tabular}{|c | c | c | c | c|}
\hline
    & \multicolumn{4}{c|}{$k$} \\ \cline{2-5}
$n$ &   2   &     3 &    4 &     5 \\ 
\hline
1   &   1   &     1 &    1 &     1 \\
2   &   2   &     2 &    2 &     2 \\
3   &   3   &     3 &    3 &     3 \\
4   &   4   &     4 &    4 &     4 \\
5   &   6   &     5 &    5 &     5 \\
6   &   7   &     8 &    6 &     6 \\
7   &   10  &    10 &   10 &     7 \\
8   &   11  &    11 &   13 &    12 \\
9   &   20  &    17 &   15 &    16 \\
10  &   22  &    20 &   16 &    19 \\
11  &   28  &    21 &   26 &    21 \\
12  &   29  &    31 &   32 &    22 \\
13  &   53  &    35 &   35 &    37 \\
14  &   56  &    36 &   36 &    47 \\
15  &   66  &   100 &   56 &    53 \\
16  &   67  &   118 &   66 &    56 \\
17  &   202 &   121 &   70 &    57 \\
18  &   211 &   202 &   71 &    92 \\
19  &   252 &   219 &  106 &   112 \\
20  &   254 &   221 &  121 &   122 \\
\hline
\end{tabular}
\vspace{-.2cm}
\caption{The minimal rank $a_n$ of strictly $k$-furcating trees with $(n-1)(k-1)+1$ leaves, as obtained by~\cref{eq:recurrence_an}. For $k=2$, the values of $a_n$ follow sequence A354970 in the On-Line Encyclopedia of Integer Sequences.}
\label{tab:an}
\end{table}

\subsection{Asymptotics} 
\label{subsec:strictly_k_asymptotics}

We next study the asymptotic growth of $a_n$ and $b_n$ given their recurrences provided by~\cref{eq:recurrence_an} and ~\cref{eq:recurrence_bn}. We begin with a relation between the minimal and maximal rank.

\begin{proposition} 
\label{prop:minimal_rank_maximal_rank_related} $a_{1 + \frac{k^n - 1}{k-1}} + 1 = b_{n+2}$ for $n \geq 0$.
\end{proposition}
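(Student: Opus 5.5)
Set $N_n = 1 + \frac{k^n-1}{k-1}$, so $N_0 = 1$ and $N_{n+1} = kN_n - k + 2$, i.e.\ $\frac{N_{n+1}-2}{k} = N_n - 1$. The plan is to prove $a_{N_n}+1 = b_{n+2}$ by induction on $n$, using the recurrences \cref{eq:recurrence_bn} for $b$ and the simplified form \cref{eq:cancel} for $a$. The key observation is that $k \mid (N_{n+1}-2)$, so \cref{eq:cancel} applies at index $N_{n+1}$ and gives
\begin{align*}
a_{N_{n+1}} = 1 + \binom{a_{\frac{N_{n+1}-2}{k}+1} + k - 1}{k} = 1 + \binom{a_{N_n} + k - 1}{k}.
\end{align*}
Structurally, this says that $z_{N_{n+1}}$ is the tree with $k$ copies of $z_{N_n}$ at the root. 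Thus $z_{N_n}$ is the complete $k$-ary tree of depth $n$, consistent with $(N_n-1)(k-1)+1 = k^n$ leaves.

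For the base case $n=0$, we need $a_1 + 1 = 2 = b_2$. From \cref{eq:recurrence_bn}, $b_2 = 2 + \binom{k-1}{k} = 2$, and $a_1 = 1$, so this holds. (As a check on $n=1$: $a_{N_1} = a_2 = 2$ and $b_3 = 2 + \binom{k}{k} = 3$.)

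For the inductive step, assume $a_{N_n} + 1 = b_{n+2}$, i.e.\ $a_{N_n} = b_{n+2} - 1$. Substituting into the displayed identity gives
\begin{align*}
a_{N_{n+1}} + 1 = 2 + \binom{b_{n+2} - 1 + k - 1}{k} = 2 + \binom{b_{n+2} + k - 2}{k} = b_{n+3},
\end{align*}
where the last equality is \cref{eq:recurrence_bn} at index $n+3$. This completes the induction.

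The only step requiring care is verifying that \cref{eq:cancel} applies at index $N_{n+1}$ and that its inner index is $N_n$. Both reduce to the identity $N_{n+1} - 2 = k(N_n - 1)$, which follows from $\frac{k^{n+1}-1}{k-1} - 1 = k\cdot\frac{k^n-1}{k-1}$. We also need $N_{n+1} \ge 2$ so that the recurrence \cref{eq:recurrence_an} is in force; this holds for all $n \ge 0$. The case $N_{n+1} = 2$ does not arise, since $N_1 = 2$ is already covered by the check above and $N_{n+1} \ge N_1$. No real obstacle is expected beyond this index bookkeeping.
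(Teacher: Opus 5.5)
Your proposal is correct and follows the paper's proof essentially step for step. Both argue by induction on $n$, use $N_{n+1}-2 = k(N_n-1)$ to invoke the simplified recurrence \cref{eq:cancel} with inner index $N_n$, and substitute the inductive hypothesis into \cref{eq:recurrence_bn}. One small correction: your remark that the case $N_{n+1}=2$ ``does not arise'' is inaccurate, since it arises when stepping from $n=0$ to $n=1$. This is harmless, because \cref{eq:cancel} at index $2$ gives $a_2 = 1+\binom{k}{k}=2$ directly, and the paper likewise treats $n=1$ as a separate base case.
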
 
\begin{proof}
We first see that $1 + (k^n-1)/(k-1) = 2 + (k + \ldots + k^{n-1}),$ so that $k | \big(1 + (k^n-1) / (k-1) - 2\big)$. By \cref{eq:cancel}, the recursive formula for $a_{1 + (k^n - 1)/(k-1)}$ is 
\begin{align} 
\label{eq:series_a_recursive}
a_{1 + \frac{k^n - 1}{k-1}} = 1 + \binom{a_{\frac{k+k^2+\ldots+k^{n-1}}{k} + 1} + k - 1}{k} = 1 + \binom{a_{\frac{k^{n-1} - 1}{k - 1} + 1} + k - 1}{k}.
\end{align}

We induct on $n$. For the base case of $n=0$, the statement is that $a_1+1=b_2$, which holds as $a_1=1$ and $b_2=2$. For $n=1$, the statement is that $a_2 + 1 = b_3$; the equality holds, as $a_2 = 2$ and $b_3 = 3$. 

For the inductive hypothesis, for each $n > 1$, assume that $a_{1 + (k^{n-1} - 1)/(k - 1)} + 1 = b_{n+1}$. Using~\cref{eq:recurrence_bn} and~\cref{eq:series_a_recursive},
\begin{align*}
& a_{1 + \frac{k^n - 1}{k-1}} + 1 = 1 + \bigg[1 + \binom{a_{\frac{k^{n-1} - 1}{k - 1} + 1} + k - 1}{k} \bigg] = 2 + \binom{b_{n+1} - 1 + k - 1}{k} = 2 + \binom{b_{n+1} + k - 2}{k} = b_{n+2}.
\end{align*}
The proof is complete. 
\end{proof}

\medskip
Examples of \cref{prop:minimal_rank_maximal_rank_related} can be seen in~\cref{tab:bn} and~\cref{tab:an}. In the case of $k=2$, $a_8+1=b_5=12$ and $a_{16}+1=b_6=68$. For $k=3$, $a_5+1 = b_4=6$ and $a_{14}+1=b_5=37$. A corollary of~\cref{prop:minimal_rank_maximal_rank_related} is that the trees with rank in the interval $[b_{n}, b_{n+1})$ have number of leaves in $[n, 1 + (k^{n-1}-1)/(k-1)]$. This result can be obtained by noting first that $a_{1 + (k^{n-1}-1)/(k-1)}=b_{n+1}-1$. Because $\{a_n\}_{n=1}^{\infty}$ is increasing by~\cref{prop:strict_min_increasing} and~\cref{thm:strictly_min_equals_ast}, each tree with rank in $[b_n, b_{n+1})$ has at most $1 + (k^{n-1}-1)/(k-1)$ leaves. Each tree with rank in $[b_n, b_{n+1})$ has at least $n$ leaves because $\{b_n\}_{n=1}^{\infty}$ is increasing by~\cref{prop:strict_monotonic} and~\cref{thm:strict_max_equals_ast}. This result generalizes Proposition 12 of~\cite{Rosenberg2021} to general $k \geq 2$.

We next study the asymptotic growth of $b_n$, which we show grows as a doubly exponential function. The result explains the fast growth exhibited for small $n$ (Table~\ref{table: beta-alpha}). The base of the exponential depends on $k$.

\begin{theorem} 
\label{thm:asymptotic_b_n}
$b_n \sim (k!)^{\frac{1}{k-1}} \beta_k^{(k^n)}$ for a constant $\beta_k$ depending only on $k \geq 2$.
\end{theorem}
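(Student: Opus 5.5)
We use the standard Aho--Sloane technique for doubly exponential recurrences of the form $x_{n} = x_{n-1}^k(1+\text{small})$. The recurrence \cref{eq:recurrence_bn} is $b_n = 2 + \binom{b_{n-1}+k-2}{k}$, and for large $b_{n-1}$ it behaves like $b_{n-1}^k/k!$. To turn this into a pure power recurrence, set $c_n = (k!)^{-\frac{1}{k-1}} b_n$. Then $b_{n-1}^k/k! = (k!)^{\frac{k}{k-1}-1} c_{n-1}^k = (k!)^{\frac{1}{k-1}} c_{n-1}^k$, so
\begin{align*}
c_n = c_{n-1}^k (1+\varepsilon_{n-1}), \qquad 1+\varepsilon_{n-1} := \frac{2 + \binom{b_{n-1}+k-2}{k}}{b_{n-1}^k/k!}.
\end{align*}
Since $\binom{b+k-2}{k} = \frac{1}{k!}\prod_{j=-1}^{k-2}(b+j)$, we get $1+\varepsilon_{n-1} = \prod_{j=-1}^{k-2}(1+j/b_{n-1}) + 2k!/b_{n-1}^k = 1 + O(1/b_{n-1})$. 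The leading correction is $\frac{(k-2)(k-1)/2 - 1}{b_{n-1}}$, which for $k=2$ is $-1/b_{n-1}$ plus the $2\cdot 2!/b^2$ term.

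Take logarithms: $\log c_n = k\log c_{n-1} + \log(1+\varepsilon_{n-1})$. Iterating from a fixed index $n_0$ gives
\begin{align*}
\frac{\log c_n}{k^n} = \frac{\log c_{n_0}}{k^{n_0}} + \sum_{m=n_0}^{n-1} \frac{\log(1+\varepsilon_m)}{k^{m+1}}.
\end{align*}
Define $\log\beta_k = \frac{\log c_{n_0}}{k^{n_0}} + \sum_{m\ge n_0} k^{-(m+1)}\log(1+\varepsilon_m)$. This series converges absolutely, because $\varepsilon_m\to 0$: Proposition~\ref{prop:strict_monotonic} and Theorem~\ref{thm:strict_max_equals_ast} give $b_m\to\infty$, indeed doubly exponentially. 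Then
\begin{align*}
\log c_n = k^n\log\beta_k - r_n, \qquad r_n = \sum_{m\ge n} k^{n-m-1}\log(1+\varepsilon_m).
\end{align*}
Hence $b_n = (k!)^{\frac{1}{k-1}}\beta_k^{(k^n)} e^{-r_n}$, and it suffices to show $r_n \to 0$.

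For this, use $|\log(1+\varepsilon_m)| \le C/b_m$ for $m$ large, together with the monotonicity of $b_m$. This gives $|r_n| \le \frac{C}{b_n}\sum_{j\ge 0} k^{-j-1} = \frac{C}{(k-1)b_n} \to 0$, which yields the asymptotic.

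We also need $\beta_k>1$, so that the statement is nontrivial and $\beta_k$ depends only on $k$. Because $r_n\to0$ and $b_n\to\infty$, we have $\beta_k^{(k^n)}\to\infty$, which forces $\beta_k>1$. The constant depends only on $k$, since it is defined from the $k$-dependent recurrence with $b_1=1$.

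The main obstacle is bookkeeping near the start of the recurrence. The terms $1+\varepsilon_m$ must be positive, so that their logarithms are defined. This holds for all $m\ge1$, since $b_m\ge1$ and both the binomial and the $+2$ are positive. Similarly, the bound $|\log(1+\varepsilon_m)|\le C/b_m$ must hold uniformly. This is handled by choosing $n_0$ large enough that $b_{n_0}\ge 2k$, so that each factor $1+j/b$ lies in $[1/2,2]$; the finitely many earlier terms are absorbed into the definition of $\beta_k$. For $k=2$, the construction reduces to $b_n\sim 2\beta^{(2^n)}$ from \cite{Rosenberg2021}, as a sanity check.
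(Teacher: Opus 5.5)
Your proof is correct and follows essentially the same Aho--Sloane argument as the paper. The differences are cosmetic. The paper first shifts to $d_n = b_n - 2$, so that $d_n = \binom{d_{n-1}+k}{k}$ and the correction $\rho_n = \log[1 + P_k(d_n)/d_n^k]$ is positive and decreasing, which gives the tail bound $\rho_n/(k-1)$; you instead work with $b_n$ directly and control the possibly negative corrections through $|\log(1+\varepsilon_m)| \le C/b_m$ together with the monotonicity of $b_m$.
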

\begin{proof} 
Rewrite the recurrence $b_n = 2 + \binom{b_{n-1}+k-2}{k}$ in \cref{eq:recurrence_bn} as $b_n - 2 = \binom{(b_{n-1}-2) + k}{k}$, or $d_n = \binom{d_{n-1} + k}{k}$, where $d_n = b_n - 2$ for all $n \geq 3$. Because $b_3 = 3$, we have that $d_3 = 1$. Clearly $d_n \sim b_n$, so it suffices to show that $d_n \sim (k!)^{\frac{1}{k-1}} \beta_k^{(k^n)}$.

We simplify the recurrence further: 
\begin{align*}
d_n &= \binom{d_{n-1} + k}{k} = \frac{\big(d_{n-1}+k\big)\big(d_{n-1}+k-1\big) \cdots \big(d_{n-1}+1\big)}{k!} \\
&= \frac{1}{k!}d_{n-1}^{k} + \frac{1}{k!}P_k(d_{n-1}),
\end{align*}
where $P_k(x) = \big(x+k \big)\big(x+k-1\big) \cdots \big(x+1\big) - x^k$ is a polynomial with positive integer coefficients and degree $k-1$. For example, if $k = 4$, then $P_4(x) = (x+4)(x+3)(x+2)(x+1)-x^4 = 10x^3 + 35x^2 + 50x + 24$. 

Dividing both sides of the recurrence by $d_{n-1}^k$, we deduce that $k! \,{d_n}/{d_{n-1}^k} = 1 + {P_k(d_{n-1})}/{d_{n-1}^k}$. Taking the logarithm of both sides of this equation and letting $y_n = \log d_n$, we find that for $n\geq 3$,
$$y_n = ky_{n-1} - \log(k!) + \rho_{n-1}, $$
with $\rho_{n-1} = \log \big[1 + P_k(d_{n-1}) / d_{n-1}^k \big]$. 

Note that $\rho_{n-1} \to 0$ as $n \to \infty$, because the polynomial $P_k$ has degree less than $k$ and $d_n$ grows arbitrarily large. We can eliminate the $\log(k!)$ term via the substitution $z_n = y_n - \log(k!) / (k-1)$, yielding for $n\geq 3$,
\begin{align*}
z_n = kz_{n-1} + \rho_{n-1}.
\end{align*}

Now, following the method of Aho \& Sloane~\cite{AhoSloane}, for $n \geq 4$,
\begin{align} 
\label{eq:aho_sloane_recurrence_strictly_k}
& z_n = k^{n-3}z_3 + \sum_{i=3}^{n-1} k^{n - 1 - i}\rho_i = k^n \bigg[z_3k^{-3} + \sum_{i=3}^{\infty} k^{-(i+1)} \rho_i \bigg] - \sum_{i=n}^{\infty} k^{n-1-i} \rho_i. 
\end{align}
Exponentiating both sides,
\begin{align*}
d_n \exp\bigg[-\frac{\log(k!)}{k-1} \bigg] & = e^{z_n} = \bigg[\exp(z_3k^{-3}) \, \exp \bigg(\sum_{i=3}^{\infty} k^{-(i+1)}\rho_i \bigg) \bigg]^{(k^n)} \exp \bigg( - \sum_{i=n}^{\infty} k^{n-1-i} \rho_i \bigg) \\ 
&= \beta_k^{(k^{n})} \exp \bigg( - \sum_{i=n}^{\infty} k^{n-1-i} \rho_i \bigg),
\end{align*}
where $\beta_k$ is the constant defined by $\beta_k = \exp(z_3k^{-3}) \, \exp (\sum_{i=3}^{\infty} k^{-(i+1)}\rho_i )$. Hence, 
\begin{align*} 
\label{eq:ratio_d_n_beta}
\frac{d_n}{\beta_k^{(k^n)}} = \exp \bigg[\frac{ \log(k!)}{k-1} \bigg] \exp \bigg( - \sum_{i=n}^{\infty} k^{n-1-i} \rho_i \bigg) = (k!)^{\frac{1}{k-1}} \exp \bigg( - \sum_{i=n}^{\infty} k^{n-1-i} \rho_i \bigg).
\end{align*}

Next, note that $\rho_n$ strictly decreases as $n$ increases. To verify this claim, recall that $\rho_n = \log [1 + P_k(d_n) / d_n^k ]$. Writing $P_k(x) = a_{k-1}x^{k-1} + a_{k-2}x^{k-2} + \ldots + a_1x + a_0$ for positive constants $a_0,a_1,\ldots,a_{k-1}$,
\begin{align*}
\frac{P_k(d_n)}{d_n^k} 
= \frac{a_{k-1}}{d_n} + \ldots + \frac{a_1}{d_n^{k-1}} + \frac{a_0}{d_n^k}.
\end{align*} 
Because $d_n$ strictly increases without bound as $n$ increases and $a_0,a_1,\ldots,a_{k-1}$ are positive constants, $P_k(d_n) / d_n^k$ strictly decreases to zero as $n$ increases, so that $\rho_n$ is strictly decreasing as well.

Because $\rho_n \to 0$, we have $\sum_{i=n}^{\infty} k^{n-1-i} \rho_i \leq \rho_n \sum_{i=n}^{\infty} k^{n-1-i} = \rho_n / (k-1) \to 0$. We conclude ${d_n}/{\beta_k^{(k^n)}} \sim (k!)^\frac{1}{k-1}$, and the result follows.
\end{proof}

\medskip
With $b_n \sim d_n \sim (k!)^{\frac{1}{k-1}} \beta_k^{(k^n)}$, where $\beta_k = \exp(z_3k^{-3}) \, \exp (\sum_{i=3}^{\infty} k^{-(i+1)}\rho_i )$, we compute the constants $\beta_k$ for small $k$ from the first 12 terms of the sequence $\{b_n\}$. These constants appear in Table~\ref{table: beta-alpha}. For $k=2$, the numerical value $\beta_2$ agrees with~\cite{Rosenberg2021}. The decrease with $k$ of the values of $\beta_k$ in Table~\ref{table: beta-alpha} suggests a result that is proven in Appendix~\ref{sec:proof_beta_decreasing}.

\begin{table}[tb]
\begin{center}
\begin{tabular}{ |c|c|c| } 
\hline
$k$ & $\beta_k$ & $\alpha_k$ \\
\hline
$2$ & $1.056528765669$ & $1.246020832983$ \\
$3$ & $1.011234962848$ & $1.105779896530$ \\
$4$ & $1.003714439923$ & $1.061115735103$ \\
$5$ & $1.001583703886$ & $1.040354248550$ \\
$6$ & $1.000789140695$ & $1.028804924924$ \\
$7$ & $1.000437369864$ & $1.021657632449$ \\
\hline
\end{tabular}
\vspace{-.5cm}
\captionof{table}{Values of $\beta_k$ and $\alpha_k$, calculated using $\beta_k = \exp(z_3k^{-3}) \, \exp (\sum_{i=3}^{\infty} k^{-(i+1)}\rho_i )$ and $\alpha_k = \beta_k^{(k^2)}$. To calculate $\rho_i$ and $z_3$, we evaluate the first 12 terms $\{b_n\}$ via the recurrence~\cref{eq:recurrence_bn}. The value $k$ denotes the number of immediate descendants of each internal node; $\beta_k$ is  the base of the exponent in the growth $b_n \sim (k!)^{\frac{1}{k-1}} \beta_k^{(k^n)}$ and $\alpha_k$ is the base of the exponent in the growth $a_{1 + (k^n-1)/(k-1)} \sim (k!)^{\frac{1}{k-1}} \alpha_k^{(k^n)}$. Note that the calculation is accurate beyond 12 decimal places; we show the first 12 decimal places.}
\label{table: beta-alpha}
\end{center}
\end{table}

\begin{proposition} 
\label{lemma:beta_decreasing}
For $k\geq 2$, the constant $\beta_k$ satisfies $\beta_{k+1} < \beta_k$.
\end{proposition}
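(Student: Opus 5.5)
The approach is to trap each $\beta_k$ between two explicit quantities computed from a single term $d_n$, and then compare the bound for $k+1$ with the bound for $k$. We use the notation of the proof of \cref{thm:asymptotic_b_n}: $d_n=b_n-2$, $d_{n+1}=\binom{d_n+k}{k}$, $d_3=1$, and
$$\log\beta_k=\lim_{n\to\infty}k^{-n}\log\frac{d_n}{(k!)^{1/(k-1)}},$$
which is immediate from $d_n/\beta_k^{(k^n)}\to(k!)^{1/(k-1)}$. Write $c_k=(k!)^{1/(k-1)}$.

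\emph{Lower bound.} Since $\binom{x+k}{k}\geq x^k/k!$, we have $d_{n+1}/c_k\geq (d_n/c_k)^k$. Hence $L_n(k)=k^{-n}\log(d_n/c_k)$ is nondecreasing in $n$, and $\log\beta_k\geq L_n(k)$ for every $n\geq 3$.

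\emph{Upper bound.} By AM--GM, $\binom{x+k}{k}\leq (x+\tfrac{k+1}{2})^k/k!$. Choose a shift $s=s_k$, for instance $s=k$, such that $\binom{x+k}{k}+s\leq (x+s)^k/k!$ for all $x\geq x_0(k)$. Once $d_n\geq x_0$, the quantity $U_n(k)=k^{-n}\log((d_n+s)/c_k)$ is nonincreasing in $n$, and therefore $\log\beta_k\leq U_n(k)$. Checking the required polynomial inequality $(x+s)^k-k!\,s\geq (x+1)\cdots(x+k)$ is routine: expand both sides and compare coefficients, using $s\geq (k+1)/2$, which dominates the average factor.

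\emph{Comparison.} It then suffices to show $U_n(k+1)<L_{n'}(k)$ for suitable indices $n,n'$.
\begin{itemize}[label=$\cdot$]
\item For small $k$, say $2\leq k\leq K$, this is a finite numerical check using the values in Table~\ref{tab:bn}. Already $n=6$ or $7$ separates the bounds far more sharply than the gaps between consecutive $\beta_k$ visible in Table~\ref{table: beta-alpha}.
\item For large $k$, use the explicit early terms $d_4=k+1$ and $d_5=\binom{2k+1}{k}$, and estimate $\binom{2k+1}{k}$ between $4^k/(2k+2)$ and $4^k$. Stirling's bounds for $c_k$ give $\log c_k=\log k-1+O(\log k/k)$.
\item These estimates yield $\log\beta_k=k^{-5}\bigl(k\log 4-\log k+O(1)\bigr)(1+o(1))$, with explicit error terms, so $\log\beta_k$ is of order $k^{-4}$. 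The ratio $\log\beta_{k+1}/\log\beta_k\approx (k/(k+1))^4$ is then bounded away from $1$, which gives strict decrease for all $k>K$.
\end{itemize}

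\emph{Main obstacle.} The hard step is making the large-$k$ comparison fully rigorous. The lower bound $L_5(k)$ and the upper bound $U_5(k+1)$ (or $U_6(k+1)$) must be separated uniformly in $k$, which requires tracking the shift $s_k$ and the threshold $x_0(k)$ carefully: we need $d_5\geq x_0(k)$ so that monotonicity of $U_n$ starts by $n=5$. If the shift $s=k$ makes $U_5$ too loose, I would instead start the upper bound at $n=6$. There $d_6\approx 4^{k^2}/(k!)$ is so large that the shift contributes only $O(s/d_6)$, and the comparison becomes $k^{-6}\log d_6$ versus $(k+1)^{-6}\log d_6^{(k+1)}$, with leading terms $k^{-4}\log 4$ against $(k+1)^{-4}\log 4$. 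The remaining task is to bound the lower-order terms explicitly, and to choose $K$ so that these bounds hold for all $k>K$ while $k\leq K$ is handled numerically.
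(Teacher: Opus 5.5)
Your sandwich framework is sound. $L_n(k)$ is nondecreasing; in the paper's notation, $\log\beta_k-L_n(k)=\sum_{i\ge n}k^{-(i+1)}\rho_i$. The shifted bound $U_n(k)$ works with $s=k$. Moreover, $U_5(k+1)$ and $U_6(k+1)$ exceed $L_5(k+1)$ only by exponentially small amounts. So everything reduces to proving $L_5(k)>L_5(k+1)$ with a little room to spare.

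\textbf{The gap is in how you propose to do this for large $k$.}

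One has $k^5\log\beta_k\approx\log\binom{2k+1}{k}-\log c_k=k\log 4-\tfrac32\log k+O(1)$. The coefficient of $\log k$ is $-\tfrac32$, not $-1$. Hence $\log\beta_k-\log\beta_{k+1}\approx 4\log 4\cdot k^{-5}$, and
$\log\beta_{k+1}/\log\beta_k=1-4/k+O(k^{-2}\log k)\to 1$.
The ratio is \emph{not} bounded away from $1$. A description of $\log\beta_k$ with a multiplicative $(1+o(1))$ error, or with an unspecified $O(1)$ inside $k^5\log\beta_k$, cannot decide the sign of the difference.

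The $-\tfrac32 k^{-5}\log k$ term alone exceeds the gap once $k\gtrsim 40$. So the lower-order terms cannot be bounded separately at $k$ and at $k+1$. They must cancel in the difference, and the combined error in $k^5\log\beta_k$ and $(k+1)^5\log\beta_{k+1}$ must stay below roughly $4\log 4\approx 5.5$, uniformly in $k$.

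Your estimate $4^k/(2k+2)\le\binom{2k+1}{k}\le 4^k$ leaves a window of width $\log(2k+2)$ in exactly this quantity. With it, your lower bound for $\log\beta_k$ minus your upper bound for $\log\beta_{k+1}$ is less than $k^{-5}\bigl(4\log 4-\log(2k+2)\bigr)$. That is negative once $2k+2\ge 256$. So the large-$k$ step fails precisely in the range it is meant to cover. The shift $s_k$ and threshold $x_0(k)$, which you flag as the obstacle, only contribute exponentially small terms and are not the real issue.

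\textbf{How to close it.} You would need to prove $L_5(k)-L_5(k+1)\ge c\,k^{-5}$ for an explicit $c>0$ and all $k>K$. Expand both sides with the same sharp formula. For example, use $4^k/\sqrt{4k}\le\binom{2k}{k}\le 4^k/\sqrt{3k+1}$ together with Robbins' two-sided Stirling bounds for $\log k!$ inside $\log c_k$. Then handle $k\le K$ numerically.

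\textbf{How the paper avoids this.} The paper writes $\log\beta_k=-k^{-3}\log c_k+\sum_{i\ge 3}k^{-(i+1)}\rho_i$ and beats each piece by its $(k+1)$-counterpart:
\begin{itemize}
\item The $i=3$ piece together with the constant is exactly the statement $L_4(k)>L_4(k+1)$, which is Lemma~\ref{lemma: log-ineq-1}.
\item Each $i\ge 4$ term is handled by the monotonicity of $G(x,k)$ in $k$ and in $x$, together with $d_i<d_i'$.
\end{itemize}
Lower-order terms are thereby matched term by term. No upper bound on $\beta_{k+1}$ and no asymptotic expansion of $\binom{2k+1}{k}$ are needed.
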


\cref{thm:asymptotic_b_n} has the following immediate corollary.
\begin{corollary}
For $k \geq 2$, $a_{1 + \frac{k^n-1}{k-1}} \sim (k!)^{\frac{1}{k-1}} \alpha_k^{(k^n)}$, where $\alpha_k = \beta_k^{(k^2)}$.
\end{corollary}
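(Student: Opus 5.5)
The plan is to combine \cref{prop:minimal_rank_maximal_rank_related} with \cref{thm:asymptotic_b_n} and shift the index. By \cref{prop:minimal_rank_maximal_rank_related}, for every $n \geq 0$ we have
\begin{equation*}
a_{1 + \frac{k^n-1}{k-1}} = b_{n+2} - 1.
\end{equation*}
The right-hand side grows without bound, because $\{b_n\}$ is strictly increasing by \cref{prop:strict_monotonic} and \cref{thm:strict_max_equals_ast}. Hence $b_{n+2}-1 \sim b_{n+2}$, and it suffices to find the asymptotics of $b_{n+2}$.

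Next I apply \cref{thm:asymptotic_b_n} with index $n+2$ in place of $n$. This gives $b_{n+2} \sim (k!)^{\frac{1}{k-1}} \beta_k^{(k^{n+2})}$. I then rewrite the exponent as $\beta_k^{(k^{n+2})} = \beta_k^{(k^2 \cdot k^n)} = \big(\beta_k^{(k^2)}\big)^{(k^n)} = \alpha_k^{(k^n)}$, with $\alpha_k = \beta_k^{(k^2)}$. Chaining the asymptotic equivalences (relation $\sim$ is transitive) yields $a_{1 + \frac{k^n-1}{k-1}} \sim (k!)^{\frac{1}{k-1}} \alpha_k^{(k^n)}$, as claimed.

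I do not expect a real obstacle. The only points to check are that \cref{thm:asymptotic_b_n} is a statement about $n \to \infty$, so reindexing by a fixed shift of $2$ is harmless, and that subtracting the constant $1$ does not affect $\sim$ since $b_{n+2} \to \infty$.
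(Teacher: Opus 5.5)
Your proposal is correct and follows the paper's proof essentially line for line: you combine $a_{1+\frac{k^n-1}{k-1}}+1=b_{n+2}$ with the asymptotic formula for $b_{n+2}$, then rewrite $\beta_k^{(k^{n+2})}=\bigl(\beta_k^{(k^2)}\bigr)^{(k^n)}$. Your remark that $b_{n+2}\to\infty$ (so subtracting $1$ does not affect $\sim$) makes explicit the step $a_{1+\frac{k^n-1}{k-1}}\sim a_{1+\frac{k^n-1}{k-1}}+1$, which the paper leaves implicit.
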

\begin{proof}
Because $a_{1 + \frac{k^n - 1}{k-1}} + 1 = b_{n+2}$ 
(\cref{prop:minimal_rank_maximal_rank_related}), we have
\begin{align*}
a_{1 + \frac{k^n - 1}{k-1}} \sim a_{1 + \frac{k^n - 1}{k-1}} + 1 = b_{n+2} \sim (k!)^{\frac{1}{k - 1}} \beta_k^{(k^{n+2})} = (k!)^{\frac{1}{k - 1}} \alpha_k^{(k^n)}.
\end{align*}
The result then follows. 
\end{proof}

\medskip
Values of $\alpha_k$ for small $k$ appear in Table~\ref{table: beta-alpha}. 

\section{At-most-$k$-furcating trees} 
\label{sec:at_most_k_furcating}

In this section, we perform on at-most-$k$-furcating trees a similar analysis to that we conducted with strictly $k$-furcating trees. One difference between the at-most-$k$-furcating and strictly $k$-furcating cases is that in contrast with the strictly $k$-furcating case, where we require that $n \equiv 1 \pmod {k-1}$, for each $n \geq 1$, an at-most-$k$-furcating tree with $n$ leaves can be constructed. For example, the caterpillar tree with $n$ leaves is an at-most-$k$-furcating tree for $k \geq 2$, whereas it is not a strictly $k$-furcating tree for $k > 2$.

Denote by $A_n$ and $B_n$ the minimal and maximal rank among at-most-$k$-furcating trees with $n$ leaves. Let $y_n$ and $Y_n$ be the at-most-$k$-furcating trees with ranks $A_n$ and $B_n$, respectively. 

\subsection{Maximal rank} 
\label{subsec:at_most_k_maximal_rank}

The strategy for finding the trees $Y_n$ is similar to that used in~\cref{sec:strictly_k}. We construct a collection of trees $\{Y_n^{\ast}\}$ and show via induction that $Y_n = Y_n^{\ast}$. Set $\{Y_n^{\ast}\}_{n \geq 1}$ to be the collection of bifurcating caterpillar trees: (i) $Y_1^{\ast}$ is the tree with exactly one leaf; (ii) $Y_n^{\ast}$ is the tree with subtrees $Y_{n-1}^{\ast}$ and $Y_1^{\ast}$. We show that the rank $f(Y_n^{\ast})$ is strictly increasing with $n$.

\begin{proposition}
\label{prop:up_to_k_maximal_increasing} 
$f(Y_n^{\ast}) > f(Y_{n-1}^{\ast})$ for all $n \geq 2$.
\end{proposition}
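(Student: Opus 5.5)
We compute the rank of $Y_n^{\ast}$ explicitly and then compare with $f(Y_{n-1}^{\ast})$, mirroring the proof of \cref{prop:strict_monotonic}. By construction, for $n \geq 2$ the tree $Y_n^{\ast}$ has only two subtrees at the root, $Y_{n-1}^{\ast}$ and $Y_1^{\ast}$. In the at-most-$k$-furcating encoding its tuple is therefore $K(Y_n^{\ast}) = \big(f(Y_{n-1}^{\ast}), 1, 0, \ldots, 0\big)$, with $k-2$ trailing zeros; this is valid because $f(Y_{n-1}^{\ast}) \geq 1$. Write $x = f(Y_{n-1}^{\ast})$.

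Next we substitute into \cref{eq:rank_at_most_k}. The entry $x_1 = x$ sits at index $i = k$, and $x_2 = 1$ sits at index $i = k-1$. Every zero entry sits at some index $i \leq k-2$ and contributes $\binom{i-1}{i} = 0$. Hence
\begin{align*}
f(Y_n^{\ast}) = -x + 1 + \binom{1 + k - 2}{k-1} + \binom{x + k - 1}{k} = 2 - x + \binom{x+k-1}{k}.
\end{align*}
As a sanity check, $n=2$ gives $x=1$ and $f(Y_2^{\ast}) = 2 - 1 + 1 = 2$, which is the rank of $(1,1,0,\ldots,0)$ and is consistent with the dictionary listing.

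It then remains to show that $2 - x + \binom{x+k-1}{k} > x$ for every integer $x \geq 1$, that is, $\binom{x+k-1}{k} > 2x - 2$. This elementary inequality is the only real step. For $x = 1$ the left side is $1$ and the right side is $0$. For $x \geq 2$, note that $\binom{x+k-1}{k}$ is nondecreasing in $k$ for $x \geq 1$, because $\binom{x+k}{k+1} = \binom{x+k-1}{k}\cdot\frac{x+k}{k+1}$ and $x+k \geq k+1$. So it suffices to check $k = 2$, where we need $\binom{x+1}{2} = x(x+1)/2 > 2x-2$. This is equivalent to $x^2 - 3x + 4 > 0$, which holds for all real $x$ since the discriminant is negative.

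An alternative route avoids the explicit formula altogether. By \cref{lemma:up_to_k_tree_order}, one can compare $K(Y_n^{\ast}) = (x,1,0,\ldots,0)$ with $K(Y_{n-1}^{\ast})$ by induction on $n$, since by the inductive hypothesis the first entries satisfy $f(Y_{n-2}^{\ast}) < f(Y_{n-1}^{\ast})$. The base case compares with the single-leaf tree, whose tuple we take to be $(1,0,\ldots,0)$ as in the tables. Either route works. I expect the main (minor) obstacle to be correctly handling the index reversal $x_{k-i+1}$ in \cref{eq:rank_at_most_k} and checking that the zero entries contribute nothing.
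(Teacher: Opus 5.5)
Your main argument is correct, and it takes a different route from the paper's.

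\textbf{The paper's proof.} The paper inducts on $n$. It computes $f(Y_2^{\ast})=2$ directly. For the inductive step it notes that $K(Y_n^{\ast})=(f(Y_{n-1}^{\ast}),1,0,\ldots,0)$ and $K(Y_{n-1}^{\ast})=(f(Y_{n-2}^{\ast}),1,0,\ldots,0)$ differ only in the first entry. The inductive hypothesis together with \cref{lemma:up_to_k_tree_order} then gives the result. This is exactly your ``alternative route.''

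\textbf{Your primary route.} You evaluate \cref{eq:rank_at_most_k} to get $f(Y_n^{\ast})=g(f(Y_{n-1}^{\ast}))$ with $g(x)=2-x+\binom{x+k-1}{k}$. You then show $g(x)>x$ for every integer $x\geq 1$, which finishes the proof in one step with no induction. Your index bookkeeping is right: each zero entry contributes $\binom{i-1}{i}=0$, and the entry $1$ contributes $\binom{k-1}{k-1}=1$. Your reduction to $k=2$ via $\binom{x+k}{k+1}=\binom{x+k-1}{k}\cdot\frac{x+k}{k+1}$ is also sound.

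\textbf{What each approach buys.} Your version avoids induction and any reliance on the lexicographic-order lemma, whose proof in Appendix~\ref{pf:up_to_k_tree_order} is the heaviest part of the preliminaries. It mirrors the paper's own proof of \cref{prop:strict_monotonic}, and it yields the recurrence \cref{eq:recurrence_Bn} as a byproduct. The paper's version needs no inequality beyond the base case. Monotonicity of $g$ comes for free from the order lemma, which the paper needs anyway for \cref{thm:at_most_k_equals_ast}.

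\textbf{A small caution about your alternative sketch.} \cref{lemma:up_to_k_tree_order} concerns tuples $K(t)$ built from root subtrees. The tuple $(1,0,\ldots,0)$ for the single-leaf tree is only a convention used in the tables. So handle the base case by your direct computation $f(Y_2^{\ast})=2$, as the paper does, rather than by invoking the lemma on the single-leaf tree.
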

\begin{proof}
We proceed via induction. For $n=1$, the caterpillar tree $Y_1^{\ast}$ has $f(Y_1^{\ast})=1$. In the base case $n=2$, 
$$K(Y_2^{\ast}) = \Big(f(Y_1^{\ast}), f(Y_1^{\ast}), \overbrace{0, 0, \ldots, 0}^{k-2 \text{ times }}\Big) = (1,1, \overbrace{0, 0, \ldots, 0}^{k-2 \text{ times }}).$$ 
By~\cref{eq:rank_at_most_k},
\begin{align*}
f(Y_2^{\ast}) = -1 + 1 + \binom{1 + (k-1) - 1}{k-1} + \binom{1 + k - 1}{k} = 2.
\end{align*}
Then $f(Y_2^{\ast}) = 2 > 1 = f(Y_1^{\ast})$.

For the inductive hypothesis, assume that for some positive integer $n$, $f(Y_\ell^{\ast}) > f(Y_{\ell-1}^{\ast})$ for all $\ell$, $2 \leq \ell \leq n-1$. By our recursive definition of $Y_n^{\ast}$, $K(Y_n^{\ast}) = \big(f(Y_{n-1}^{\ast}), f(Y_1^{\ast}), 0, 0, \ldots, 0 \big)$ and $K(Y_{n-1}^{\ast}) = \big(f(Y_{n-2}^{\ast}), f(Y_1^{\ast}), 0, 0, \ldots, 0 \big)$. By the inductive hypothesis, $f(Y_{n-1}^{\ast}) > f(Y_{n-2}^{\ast})$, so that $K(Y_{n}^{\ast}) >_D K(Y_{n-1}^{\ast})$. Hence, $f(Y_{n}^{\ast}) > f(Y_{n-1}^{\ast})$ by~\cref{lemma:up_to_k_tree_order}. 
\end{proof}

\medskip
We use~\cref{prop:up_to_k_maximal_increasing} to prove that the at-most-$k$-furcating tree of maximal rank on $n$ leaves is $Y_n=Y_n^{\ast}$, demonstrating that the at-most-$k$-furcating tree attaining the maximal rank on $n$ leaves is the (bifurcating) caterpillar tree. 

\begin{theorem}
For $n\geq 1$, the at-most-$k$-furcating tree $Y_n$ with maximal rank is $Y_n = Y_n^{\ast}$. 
\label{thm:at_most_k_equals_ast}
\end{theorem}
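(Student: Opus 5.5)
We argue by strong induction on $n$, following the proof of \cref{thm:strict_max_equals_ast} and using \cref{lemma:up_to_k_tree_order} in place of \cref{lemma:strict_tree_order}. For $n=1$ only one tree exists, so $Y_1=Y_1^{\ast}$. For $n=2$, the only at-most-$k$-furcating tree with two leaves is the cherry, which is $Y_2^{\ast}$. For the inductive step with $n \geq 3$, we assume $Y_\ell = Y_\ell^{\ast}$ for all $\ell \leq n-1$. We write the root subtrees of $Y_n$ in canonical order as $j_1, j_2, \ldots, j_k$, where $f(j_1)\geq f(j_2) \geq 1$ and possibly some trailing entries of $K(Y_n)$ equal $0$.

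The key observation is that $j_1$ has at most $n-1$ leaves, because $j_2$ is nonempty. Let $m(j_1)$ denote its number of leaves. By the inductive hypothesis and \cref{prop:up_to_k_maximal_increasing},
\[
f(j_1) \leq f(Y_{m(j_1)}) = f(Y^{\ast}_{m(j_1)}) \leq f(Y^{\ast}_{n-1}).
\]
Two cases follow. If $f(j_1) < f(Y^{\ast}_{n-1})$, then the first coordinate gives $K(Y_n) <_D K(Y_n^{\ast}) = (f(Y^{\ast}_{n-1}),1,0,\ldots,0)$. If $f(j_1) = f(Y^{\ast}_{n-1})$, then $j_1 = Y^{\ast}_{n-1}$ by bijectivity of $f$. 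In that case $j_1$ uses $n-1$ leaves, so exactly one leaf remains. Hence $j_2$ is the single-leaf tree, with $f(j_2)=1$, and all remaining subtrees are empty, with rank $0$. This gives $K(Y_n) = K(Y_n^{\ast})$.

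Thus in both cases $K(Y_n) \leq_D K(Y_n^{\ast})$, and by \cref{lemma:up_to_k_tree_order} $f(Y_n) \leq f(Y_n^{\ast})$. Since $Y_n$ is maximal by definition and $Y_n^{\ast}$ also has $n$ leaves, we also have $f(Y_n)\geq f(Y_n^{\ast})$. Equality of ranks and the bijectivity of $f$ then give $Y_n=Y_n^{\ast}$.

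The main subtlety is that a plain substitution argument, replacing $j_1$ by $Y^{\ast}_{n-1}$, must keep the remaining coordinates at most $(1,0,\ldots,0)$ lexicographically. This is not automatic when $j_1$ is smaller and the later coordinates are large. The case split on whether the first coordinate is strictly smaller avoids this: a strict inequality in the first coordinate decides the comparison by itself, and equality forces the leaf count to settle the rest. We must also check that rank $0$ (an empty subtree) is handled consistently by \cref{lemma:up_to_k_tree_order}; we take this from that lemma as stated.
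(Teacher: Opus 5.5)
Your proof is correct and follows essentially the same route as the paper. Both bound $f(j_1)\le f(Y^{\ast}_{m(j_1)})\le f(Y^{\ast}_{n-1})$ using the inductive hypothesis and \cref{prop:up_to_k_maximal_increasing}, combine this with maximality via \cref{lemma:up_to_k_tree_order} to force $j_1=Y^{\ast}_{n-1}$, and then use the leaf count to pin down the remaining subtrees. The only difference is in how the steps are arranged: the paper reads $f(j_1)\geq f(Y^{\ast}_{n-1})$ directly off $K(Y_n)\geq_D K(Y_n^{\ast})$, whereas you split into cases on the first coordinate and invoke maximality at the end.
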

\begin{proof}
We proceed via induction. The base case of $n=1$ is trivial: only one tree has one leaf, and $Y_1 = Y_1^{\ast}$. 

For the inductive hypothesis, assume that for some positive integer $n$, $Y_\ell = Y_\ell^{\ast}$ for all $\ell$, $1 \leq \ell \leq n-1$. Suppose that $Y_n$ has subtrees $a_1,a_2, \ldots, a_k$ in canonical ordering. Note that because $Y_k$ is an at-most-$k$-furcating tree, some of $a_3, a_4, \ldots, a_k$ might be empty. We have that $K(Y_n) = \big( f(a_1), f(a_2), \ldots, f(a_k) \big)$. Because $Y_n$ is the tree of maximal rank on $n$ leaves, we must have that $f(Y_n) \geq f(Y_n^{\ast})$, so that $K(Y_n) \geq_D K(Y_n^{\ast})$. In particular, $\big( f(a_1), f(a_2), \ldots, f(a_k) \big) \geq_D \big( f(Y_{n-1}^{\ast}), 1, 0, \ldots, 0) \big)$, and $f(a_1) \geq f(Y_{n-1}^{\ast})$.

\clearpage
\begin{table}[tb]
\centering
\begin{tabular}[H]{|c|c|c|T{2cm}|c|c|T{2cm}|} \hline
\multicolumn{1}{|c|}{} & \multicolumn{3}{|c|}{Trees of maximal rank ($Y_n)$} & \multicolumn{3}{|c|}{Trees of minimal rank $(y_n)$} \\
\hline 
$n$ & $f(Y_n)$& $K(Y_n) $& $Y_n$ & $f(y_n)$ & $K(y_n)$ & $y_n$ \\ \hline
1 & 1 & $(1,0,0)$ & \begin{forest}[[]]\end{forest} & 1 & $(1,0,0)$ & \begin{forest}[[]]\end{forest}\\
2 & 2 & $(1,1,0)$ & \begin{forest}[[][]]\end{forest} & 2 & $(1,1,0)$ & \begin{forest}[[][]]\end{forest}\\
3 & 4 & $(2,1,0)$ & \begin{forest}[[[][]][]]\end{forest} & 3 & $(1,1,1)$ & \begin{forest}[[][][]]\end{forest}\\
4 & 18 & $(4,1,0)$ & \begin{forest}[[[[][]][]][]]\end{forest} & 5 & $(2,1,1)$ & \begin{forest}[[[][]][][]] \end{forest}
\\
5 & 1124 & $(18,1,0)$ & \begin{forest}[[[[[][]][]][]][]]\end{forest} & 7 & $(2,2,1)$ & \begin{forest}[[[][]][[][]][]]\end{forest}\\
6 & 237303378 & $(1124,1,0)$ & \begin{forest}[[[[[[][]][]][]][]][]]\end{forest} & 8 & $(2,2, 2)$ & \begin{forest} [[[][]][[][]][[][]]]\end{forest}\\
7 & $2.227 \times 10^{24}$ & $(237303378,1,0)$ & \begin{forest}[[[[[[[][]][]][]][]][]][]]\end{forest} & 13 & $(3,2, 2)$ & \begin{forest} [[[][][]][[][]][[][]]]\end{forest} \\
\hline 
\end{tabular}
\vskip -.2cm
\caption{Maximal-rank and minimal-rank at-most-trifurcating trees with $n$ leaves. For maximal-rank at-most-trifurcating tree $Y_n$, the rank $f(Y_n)$ and the list $K(Y_n)$ of ranks of the subtrees are shown. The trees $Y_n$ are bifurcating caterpillars. The minimal-rank at-most-trifurcating tree $y_n$ and its associated rank $f(y_n)$ and list $K(y_n)$ of ranks of its subtrees are also shown. The trees $y_n$ are highly balanced. The value of $f(Y_7)$ is an approximation.}
\label{tab:Yn-yn-trees}
\end{table}

We also have that 
\begin{align*}
f(a_1) \leq f(Y_{m(a_1)}^{\ast}) \leq f(Y_{n-1}^{\ast}),
\end{align*}
where the first inequality follows from the inductive hypothesis applied to $a_1$, as $a_1$ has strictly fewer than $n$ leaves, and the second inequality follows from~\cref{prop:up_to_k_maximal_increasing}. Therefore, $f(a_1) = f(Y_{n-1}^{\ast})$, so that $a_1$ and $Y_{n-1}^{\ast}$ are the same tree. Because $a_1$ has $n-1$ leaves and $Y_n$ has a total of $n$ leaves, it follows that $a_2$ must have one leaf and $a_3, a_4, \ldots, a_k$ are empty. Hence, $Y_n$ is the tree that has $Y_{n-1}^{\ast}$ and $Y_1^{\ast}$ as its two non-empty subtrees, which means that $Y_n = Y_n^{\ast}$.
\end{proof}
\medskip
\cref{thm:at_most_k_equals_ast} characterizes the maximal rank among at-most-$k$-furcating trees on $n$ leaves. Because $Y_n = Y_n^{\ast}$ and $f(Y_n^{\ast}) = 2-f(Y_{n-1}^{\ast}) + \binom{f(Y_{n-1}^{\ast})+k-1}{k}$, we conclude that 
\begin{align}
\label{eq:recurrence_Bn}
B_n &= 2 - B_{n-1} + \binom{B_{n-1} + k -1}{k},
\end{align}
with $B_1=1$. For $k=2$, \cref{eq:recurrence_Bn} is identical to \cref{eq:recurrence_bn}, as an at-most-2-furcating tree is equivalent to a strictly 2-furcating tree. For $k=3$, the trees $Y_n$ appear in Table~\ref{tab:Yn-yn-trees} for small values of $n$. Values of the rapidly growing $B_n$ for small $k$ appear in~\cref{tab:Bn}.

\begin{table}[tb]
\centering
\small
\begin{tabular}{|c | c | c | c | c|}
\hline
    & \multicolumn{4}{c|}{$k$} \\ \cline{2-5}
$n$ &    2 &  3  &  4 &   5 \\ 
\hline
1   &    1 &  1  &  1 &   1 \\ 
2   &    2 &  2  &  2 &   2 \\ 
3   &    3 &  4  &  5 &   6 \\ 
4   &    5 &  18 & 67 & 248 \\ 
5   &   12 &  1124 & 916830 & 8137369554 \\ 
6   &   68 &  237303378 & 29440613974007230765292 & 297330152006749281113411833884920659485794167214 \\ 
\hline
\end{tabular}
\vspace{-.2cm}
\caption{The maximal rank $B_n$ of at-most-$k$-furcating trees with $n$ leaves, as obtained by~\cref{eq:recurrence_Bn}. For $k=2$, the values of $B_n$ follow sequence A108225 in the On-Line Encyclopedia of Integer Sequences.}
\label{tab:Bn}
\end{table}

\subsection{Minimal rank} 
\label{subsec:at_most_k_minimal_rank}

We find the at-most-$k$-furcating tree that attains the minimal rank among trees with $n$ leaves. As in the strictly $k$-furcating case (\cref{subsec:strictly_k_minimal_rank}), the minimal-rank at-most-$k$-furcating tree is in a sense the most balanced tree possible. The strategy is to construct $\{y_n^{\ast}\}$ and to show that $y_n = y_n^{\ast}$ for each $n \geq 1$. 

Define $\{y_n^{\ast}\}$ recursively: (i) $y_1^{\ast}$ is the tree with one leaf; (ii) for $n > 1$, $y_n^{\ast}$ is the tree whose leftmost $n-k\lfloor n/k \rfloor$ subtrees are $y^{\ast}_{\lceil n/k \rceil}$ and whose rightmost $k-n +k \lfloor n/k \rfloor$ subtrees are $y^{\ast}_{\lfloor n/k \rfloor}$. As in the analysis of $z_n^{\ast}$ in~\cref{subsec:strictly_k_minimal_rank}, it is not clear from the recursive definition that $y_n^{\ast}$ has $n$ leaves. We verify this claim.

\begin{lemma}
$m(y_n^{\ast})=n$ for $n \geq 1$.
\end{lemma}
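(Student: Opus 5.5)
We argue by strong induction on $n$, in the same way as the leaf count for $z_n^{\ast}$ was verified in \cref{subsec:strictly_k_minimal_rank}. The base case $m(y_1^{\ast})=1$ holds by definition. For $n>1$, suppose that $m(y_j^{\ast})=j$ for all $1\leq j<n$. The subtrees of $y_n^{\ast}$ are copies of $y^{\ast}_{\lceil n/k\rceil}$ and $y^{\ast}_{\lfloor n/k\rfloor}$. Since $n\geq 2$ and $k\geq 2$, both indices satisfy $\lfloor n/k\rfloor\leq\lceil n/k\rceil<n$, so the inductive hypothesis applies to them.

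Write $n=kq+r$ with $q=\lfloor n/k\rfloor$ and $0\leq r=n-k\lfloor n/k\rfloor<k$. The definition uses $r$ copies of $y^{\ast}_{\lceil n/k\rceil}$ and $k-r$ copies of $y^{\ast}_{q}$, so the leaf count is
\begin{align*}
m(y_n^{\ast}) = r\, m(y^{\ast}_{\lceil n/k\rceil}) + (k-r)\, m(y^{\ast}_{q}).
\end{align*}
We split on $r$.

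\emph{Case $r=0$.} Then $\lceil n/k\rceil=q$, no copies of the larger subtree appear, and $m(y_n^{\ast})=k\,m(y^{\ast}_q)=kq=n$.

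\emph{Case $r>0$.} Then $\lceil n/k\rceil=q+1$, and the inductive hypothesis gives
\begin{align*}
m(y_n^{\ast}) = r(q+1)+(k-r)q = kq+r = n.
\end{align*}

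One point needs separate care: some subtrees may be empty, namely when $q=0$, i.e.\ $n<k$. In that case the $k-r$ rightmost subtrees are "$y_0^{\ast}$", which is not defined in the recursion. We interpret these as empty subtrees with $0$ leaves, matching the convention that $f(t_k)=0$ for an empty subtree. With this reading the count still gives $r\cdot 1+0=n$, and the root has $r=n\geq 2$ nonempty children, so $y_n^{\ast}$ is a valid at-most-$k$-furcating tree.

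The only real obstacle is this degenerate case $n<k$: the convention $m(y_0^{\ast})=0$ must be stated explicitly, and one must check that the root keeps at least two nonempty subtrees. Everything else is routine arithmetic.
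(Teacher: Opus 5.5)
Your proof is correct and follows essentially the same route as the paper: strong induction, writing the leaf count as $r\lceil n/k\rceil + (k-r)\lfloor n/k\rfloor$ with $r = n-k\lfloor n/k\rfloor$, and splitting on whether $k \mid n$. Your explicit handling of the case $n<k$ is a worthwhile addition that the paper leaves implicit: there the copies of $y_0^{\ast}$ must be read as empty subtrees with $0$ leaves, and the root still has $r=n\geq 2$ nonempty children.
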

\begin{proof}
We induct on $n$. For the base case of $n=1$, $y_1^{\ast}$ has one leaf. For the inductive hypothesis, suppose that $y_{\ell}^{\ast}$ has $\ell$ leaves for all $\ell$, $1 \leq \ell \leq n-1$. By the recursive definition of $y_n^{\ast}$, 
\begin{align*}
m(y_n^{\ast}) & = \bigg(n - k \bigg \lfloor \frac{n}{k} \bigg \rfloor \bigg) m(y^{\ast}_{\lceil \frac{n}{k} \rceil}) + \bigg(k-n+k \bigg\lfloor \frac{n}{k} \bigg\rfloor\bigg) m(y^{\ast}_{\lfloor \frac{n}{k} \rfloor}) \\ 
&= \bigg(n - k \bigg\lfloor \frac{n}{k} \bigg\rfloor\bigg) \bigg\lceil \frac{n}{k} \bigg\rceil + \bigg(k-n+k \bigg\lfloor \frac{n}{k} \bigg\rfloor\bigg)\bigg\lfloor \frac{n}{k} \bigg\rfloor \\ 
&= n \bigg\lceil\frac{n}{k} \bigg\rceil - k \bigg\lfloor \frac{n}{k} \bigg\rfloor \bigg\lceil \frac{n}{k} \bigg\rceil + k \bigg\lfloor \frac{n}{k} \bigg\rfloor - n \bigg\lfloor \frac{n}{k} \bigg\rfloor + k \bigg\lfloor \frac{n}{k} \bigg\rfloor^2.
\end{align*}
If $k | n$, then this expression becomes 
$n ( \frac{n}{k} ) - k ( \frac{n}{k} )^2 + k ( \frac{n}{k} ) - n ( \frac{n}{k} ) + k ( \frac{n}{k} ) ( \frac{n}{k} ) = n$.
If $k \nmid n$, then $\lceil\frac{n}{k} \rceil = \lfloor \frac{n}{k} \rfloor + 1$, and instead it is 
$n ( \lfloor \frac{n}{k} \rfloor + 1 ) - k \lfloor \frac{n}{k} \rfloor ( \lfloor \frac{n}{k} \rfloor + 1 ) + k \lfloor \frac{n}{k} \rfloor - n \lfloor \frac{n}{k} \rfloor + k \lfloor \frac{n}{k} \rfloor^2 = n$.
\end{proof}

\medskip
We next show that $\{f(y^{\ast}_n)\}_{n=1}^{\infty}$ increases with $n$.
\begin{proposition} 
\label{prop:up_to_k_minimal_increasing}
For $n \geq 2$, $f(y^{\ast}_n) > f(y^{\ast}_{n-1})$.
\end{proposition}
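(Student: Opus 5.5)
We argue by strong induction on $n$, in the same way as \cref{prop:strict_min_increasing}, comparing $K(y_n^{\ast})$ and $K(y_{n-1}^{\ast})$ lexicographically and then applying \cref{lemma:up_to_k_tree_order}. The inductive hypothesis is that $f(y_\ell^{\ast}) > f(y_{\ell-1}^{\ast})$ for $2 \leq \ell < n$, with the convention that $y_0^{\ast}$ is the empty tree with $f=0$. From the recursive definition, $K(y_n^{\ast})$ consists of $r_n = n-k\lfloor n/k\rfloor$ copies of $f(y^{\ast}_{\lceil n/k\rceil})$ followed by $k-r_n$ copies of $f(y^{\ast}_{\lfloor n/k\rfloor})$. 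When $n<k$ we have $\lfloor n/k\rfloor=0$, so $K(y_n^{\ast})=(1,\ldots,1,0,\ldots,0)$ with exactly $n$ ones. This is consistent with the at-most-$k$ encoding, because the root then has $n$ leaf children.

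\emph{Small cases, $2\le n\le k$.} For $n<k$, $K(y_n^{\ast})$ has $n$ ones and $K(y_{n-1}^{\ast})$ has $n-1$ ones (for $n=2$, $K(y_1^{\ast})$ is read as $(1,0,\ldots,0)$, matching the table conventions). Hence $K(y_n^{\ast})>_D K(y_{n-1}^{\ast})$. For $n=k$, $K(y_k^{\ast})=(1,\ldots,1)$ is also lexicographically greater than $(1,\ldots,1,0)$.

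\emph{General step, $n>k$.} Set $q=\lfloor (n-1)/k\rfloor$. There are two cases.
\begin{itemize}
\item[(i)] $k\nmid n$. Then $\lfloor n/k\rfloor=q$, $r_n=r_{n-1}+1\geq 1$, and $\lceil n/k\rceil=q+1$. If $k\nmid(n-1)$, the two tuples share the same two values $f(y^{\ast}_{q+1})$ and $f(y^{\ast}_q)$, and $K(y_n^{\ast})$ has one more copy of the larger one. Since $q+1<n$, the inductive hypothesis gives $f(y^{\ast}_{q+1})>f(y^{\ast}_q)$, so the first position where the tuples differ favors $y_n^{\ast}$. If $k\mid(n-1)$, then $K(y_{n-1}^{\ast})$ is constant, equal to $f(y^{\ast}_q)$, while $K(y_n^{\ast})$ begins with $f(y^{\ast}_{q+1})$. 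This is larger by induction.
\item[(ii)] $k\mid n$. Then $K(y_n^{\ast})$ is constant, equal to $f(y^{\ast}_{n/k})$. Also $\lceil (n-1)/k\rceil=n/k$ and $r_{n-1}=k-1$, so $K(y_{n-1}^{\ast})$ is $k-1$ copies of $f(y^{\ast}_{n/k})$ followed by one copy of $f(y^{\ast}_{n/k-1})$. The tuples agree in the first $k-1$ positions, and the last entry favors $y_n^{\ast}$ by induction. When $n/k-1=0$, the last entry of $K(y_{n-1}^{\ast})$ is $0$, which is smaller than $1$, so the conclusion still holds.
\end{itemize}

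In every case $K(y_n^{\ast})>_D K(y_{n-1}^{\ast})$, so \cref{lemma:up_to_k_tree_order} gives $f(y_n^{\ast})>f(y_{n-1}^{\ast})$. The only real obstacle is bookkeeping. We must handle the boundary where $\lfloor n/k\rfloor=0$, that is, the empty subtrees that make up the trailing zeros of the tuple. We must also check that every index used in the inductive hypothesis, such as $\lceil n/k\rceil$, is strictly less than $n$ and at least $2$. Both checks are immediate for $k\ge2$ and $n\ge 2$. This is true even though for $k=2$, $n=2$ the root's subtrees are $y_1^{\ast}$, which is handled directly.
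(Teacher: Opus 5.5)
Your proof is correct and follows the paper's route. Both arguments use strong induction, write out $K(y_n^{\ast})$ and $K(y_{n-1}^{\ast})$ explicitly, compare them lexicographically with a case split on the residue of $n$ modulo $k$, and conclude with \cref{lemma:up_to_k_tree_order}. Your case split is, if anything, more careful than the paper's: you treat $k \mid n$ separately, where the paper's case (ii) names the differing entry as $n-k\lfloor n/k\rfloor = 0$, and you make explicit the convention $f(y_0^{\ast})=0$ for empty subtrees when $n<k$, which the paper uses only implicitly.
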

\begin{proof}
We induct on $n$. In the base case of $n=2$, $y_2^{\ast}$ is the tree that has two copies of $y_1^{\ast}$ as its non-empty subtrees. In particular, $K(y_2^{\ast}) = \big(f(y_1^{\ast}), f(y_1^{\ast}), 0,\ldots,0\big) = (1,1,0,\ldots,0)$. Using~\cref{eq:rank_at_most_k}, we conclude that $f(y_2^{\ast}) = 2$, so that $f(y_2^{\ast}) = 2 > 1 = f(y_1^{\ast}).$

For the inductive hypothesis, suppose $f(y^{\ast}_\ell) > f(y^{\ast}_{\ell-1})$ for $2 \leq \ell \leq n-1$, with $n \geq 2$. It follows that 
\begin{align*}
K(y_n^{\ast}) &= \Big(\overbrace{f(y^{\ast}_{\lceil n/k \rceil}), \ldots,f(y^{\ast}_{\lceil n/k \rceil})}^{n - k \lfloor n/k \rfloor \text{ times }}, \overbrace{f(y^{\ast}_{\lfloor n/k \rfloor}), \ldots,f(y^{\ast}_{\lfloor n/k \rfloor})}^{k-n+k \lfloor n/k \rfloor \text{ times }}\Big). \\
K(y_{n-1}^{\ast}) &= \Big(\overbrace{f(y^{\ast}_{\lceil (n-1)/k \rceil}), \ldots,f(y^{\ast}_{\lceil (n-1)/k \rceil})}^{n-1 - k \lfloor (n-1)/k \rfloor \text{ times }}, \overbrace{f(y^{\ast}_{\lfloor(n-1)/k \rfloor}), \ldots,f(y^{\ast}_{\lfloor (n-1)/k \rfloor})}^{k-n+1+k \lfloor (n-1)/k \rfloor \text{ times }}\Big).
\end{align*}
We have two cases: (i) $\lceil \frac{n}{k} \rceil > \lceil \frac{n-1}{k} \rceil$, or $n \equiv 1 \pmod k$, and (ii) $\lceil \frac{n}{k} \rceil = \lceil \frac{n-1}{k} \rceil$, or $n \not\equiv 1 \pmod k$. 

\textit{Case (i): $\lceil \frac{n}{k} \rceil > \lceil \frac{n-1}{k} \rceil$}. This case requires $n \geq 3$, from which $\lceil \frac{n}{k} \rceil \leq \frac{n}{k} + 1 < n$. We apply the inductive hypothesis $f(y^{\ast}_{\lceil n/k \rceil}) > f(y^{\ast}_{\lceil (n-1)/k \rceil})$ to deduce that $K(y_{n-1}^{\ast}) <_D K(y_n^{\ast})$, so $f(y_{n-1}^{\ast}) < f(y_n^{\ast})$ by~\cref{lemma:up_to_k_tree_order}. 

\textit{Case (ii): $\lceil \frac{n}{k} \rceil = \lceil \frac{n-1}{k} \rceil$}. We must have $\lfloor \frac{n-1}{k} \rfloor \neq \lceil \frac{n-1}{k} \rceil$, as otherwise $k$ must divide $n-1$, or equivalently, $n \equiv 1 \pmod k$, and we are in case (i) rather than case (ii). Therefore, $\lfloor \frac{n-1}{k} \rfloor < \lceil \frac{n-1}{k} \rceil = \lceil \frac{n}{k} \rceil < n$. Hence, although $K(y_{n-1}^{\ast})$ and $K(y_n^{\ast})$ agree in their first $n - 1 - k \lfloor \frac{n-1}{k} \rfloor$ terms, at entry $n - k \lfloor \frac{n}{k} \rfloor$, $K(y_n^{\ast})$ has $f(y^{\ast}_{\lceil n/k \rceil})$, whereas $K(y_{n-1}^{\ast})$ has $f(y^{\ast}_{\lfloor (n-1)/k \rfloor})$. Because $\lfloor \frac{n-1}{k} \rfloor < \lceil \frac{n}{k} \rceil < n$, the inductive hypothesis yields $f(y^{\ast}_{\lceil n/k \rceil}) > f(y^{\ast}_{\lfloor (n-1)/k \rfloor})$. Hence, $K(y_n^{\ast}) >_D K(y_{n-1}^{\ast})$, so $ f(y_n^{\ast}) > f(y_{n-1}^{\ast})$ by~\cref{lemma:up_to_k_tree_order}.
\end{proof}

\medskip
Using~\cref{prop:up_to_k_minimal_increasing}, we can show that $y_n = y_n^{\ast}$: the at-most-$k$-furcating tree with minimal rank on $n$ leaves is $y_n^{\ast}$. The proof is in Appendix~\ref{pf:at_most_min_equals_ast}.

\begin{theorem}
For $n \geq 1$, the at-most-$k$-furcating tree $y_n$ with minimal rank is $y_n = y_n^{\ast}$. 
\label{thm:at_most_min_equals_ast}
\end{theorem}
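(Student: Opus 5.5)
We argue by strong induction on $n$. The cases $n=1$ (only one tree) and $n=2$ (the only at-most-$k$-furcating tree with two leaves is the cherry) are immediate. Assume $y_\ell = y_\ell^{\ast}$ for all $\ell < n$, and let the subtrees of $y_n$ in canonical order be $a_1,\ldots,a_k$, some possibly empty, with leaf counts $m_1,\ldots,m_k$ summing to $n$. By \cref{lemma:up_to_k_tree_order} it suffices to show $K(y_n^{\ast}) \leq_D K(t)$ for every tree $t$ on $n$ leaves, with equality only for $t=y_n^{\ast}$. Since $y_n$ minimizes rank, we have $K(y_n)\leq_D K(y_n^{\ast})$, and we aim to derive the reverse inequality.

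\textbf{First reduction.} Replace each nonempty $a_i$ by $y^{\ast}_{m_i}$. By the inductive hypothesis, $f(y^{\ast}_{m_i}) \leq f(a_i)$. Re-sorting the resulting tuple into nonincreasing order yields a tuple that is componentwise at most the original (entrywise domination is preserved by sorting), hence is $\leq_D$ it. By \cref{prop:up_to_k_minimal_increasing}, the entries $f(y^{\ast}_{m})$ are strictly increasing in $m$, so sorting by rank is the same as sorting by leaf count. The problem therefore reduces to a purely combinatorial one: over compositions $m_1\geq m_2\geq\cdots\geq m_k\geq 0$ with $\sum m_i=n$ and $m_2\geq 1$, minimize lexicographically the tuple $(g(m_1),\ldots,g(m_k))$, where $g(m)=f(y^{\ast}_m)$ for $m\geq1$ and $g(0)=0$. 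Since $g$ is strictly increasing, this is equivalent to lexicographically minimizing $(m_1,\ldots,m_k)$ itself.

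\textbf{Balanced partition.} The lexicographically smallest nonincreasing $k$-part partition of $n$ first minimizes $m_1$, and the minimum is $\lceil n/k\rceil$. Given that, the number of parts equal to $\lceil n/k\rceil$ is minimized, which forces the remaining parts to equal $\lfloor n/k\rfloor$. Concretely, any part strictly below $\lfloor n/k\rfloor$ would force either a larger first part or more parts equal to $\lceil n/k\rceil$. This yields exactly $n-k\lfloor n/k\rfloor$ parts of size $\lceil n/k\rceil$ and the rest of size $\lfloor n/k\rfloor$ (when $k\mid n$, all parts equal $n/k$), which matches $y_n^{\ast}$. The constraint $m_2\geq1$ holds automatically for $n\geq2$. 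When $n<k$, the parts are $1$'s and $0$'s and the same description applies, with $y^{\ast}_0$ read as an empty subtree of rank $0$.

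\textbf{Uniqueness and the main obstacle.} For $n \geq 2$, $\lceil n/k\rceil < n$, so the inductive hypothesis applies to the subtrees. Combining the steps gives $K(y_n^{\ast})\leq_D K(y_n)$, and together with the reverse inequality we get equality, hence $y_n=y_n^{\ast}$ by \cref{lemma:up_to_k_tree_order} and bijectivity of $f$. The main obstacle is the exchange argument in the balanced-partition step: it must cleanly justify that the lexicographically minimal partition is the balanced one, including the zero-part cases. I would write it as a short lemma: if the parts are nonincreasing and some part is at most $\lceil n/k\rceil-2$, or some part is strictly between $0$ and $\lfloor n/k\rfloor$ while a larger part exists, then moving one leaf from the last maximal part to the smallest part produces a lexicographically smaller tuple.
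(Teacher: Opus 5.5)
Your proof is correct and follows essentially the paper's route. You first reduce to subtrees that are the minimal trees $y^{\ast}_{m_i}$; you replace all subtrees at once, using that entrywise domination survives re-sorting, where the paper applies its replacement lemma one subtree at a time inside a proof by contradiction. You then use that $f(y^{\ast}_m)$ is strictly increasing in $m$, so the balanced leaf-count vector is lexicographically minimal; the paper argues this by showing that any vector lexicographically below the balanced one has leaf total less than $n$.

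Your greedy argument for the balanced partition is already complete. If you also use the optional exchange lemma, its hypotheses need fixing: as stated it misses unbalanced cases such as $(3,1,1)$ for $n=5$, $k=3$, and it never applies when $n<k$. Its hypothesis should instead be that the largest and smallest parts differ by at least $2$.
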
 

We finish this section by deriving a recurrence relation for $A_n$. In particular, using \cref{eq:rank_at_most_k} and multiple applications of the hockey-stick identity $\sum_{i=0}^r \binom{x+i}{i} = \binom{x+r+1}{r}$~\cite[eq.~5.9]{GrahamEtAl94}, we have
\begin{align}
A_n & = f(y_n) = f(y_n^{\ast}) = -A_{\lceil \frac{n}{k} \rceil} + 1 + \sum_{i=1}^{k + k \lfloor \frac{n}{k} \rfloor - n} \binom{A_{\lfloor \frac{n}{k} \rfloor} + i - 1}{i} + \sum_{i=k + k \lfloor \frac{n}{k} \rfloor - n+1}^{k} \binom{A_{\lceil \frac{n}{k} \rceil} + i - 1}{i} \nonumber \\
&= -A_{\lceil \frac{n}{k} \rceil} + 1 + \bigg[\binom{A_{\lfloor \frac{n}{k} \rfloor} + k + k \lfloor \frac{n}{k} \rfloor - n}{k + k \lfloor \frac{n}{k} \rfloor - n} - 1 \bigg] + \bigg[\sum_{i=0}^{k} \binom{A_{\lceil \frac{n}{k} \rceil} + i - 1}{i} - \sum_{i=0}^{k + k \lfloor \frac{n}{k} \rfloor - n} \binom{A_{\lceil \frac{n}{k} \rceil} + i - 1}{i} \bigg] \nonumber \\ 
\label{eq:recurrence_An}
&= -A_{\lceil \frac{n}{k} \rceil} + \binom{A_{\lfloor \frac{n}{k} \rfloor} + k + k \lfloor \frac{n}{k} \rfloor - n}{k + k \lfloor \frac{n}{k} \rfloor - n} + \binom{A_{\lceil \frac{n}{k} \rceil} + k}{k} - \binom{A_{\lceil \frac{n}{k} \rceil} + k + k\lfloor\frac{n}{k} \rfloor - n}{k + k \lfloor\frac{n}{k} \rfloor - n}, 
\end{align}
where $A_1 = 1$. Note that if $k | n$, then the second and third binomial coefficients cancel to give the simpler relation 
\begin{equation}
A_n = -A_{\lceil n/k \rceil} + \binom{A_{\lfloor n/k \rfloor} + k + k \lfloor n/k \rfloor - n}{k + k \lfloor n/k \rfloor - n}.
\label{eq:simpler}
\end{equation}
If $k=2$, then the recurrence~\cref{eq:recurrence_An} is the same as setting $k=2$ in recurrence~\cref{eq:recurrence_an}, owing to the fact that an at-most-2-furcating tree is strictly bifurcating. 

For $k=3$, the trees $y_n$ appear in Table~\ref{tab:Yn-yn-trees} for small values of $n$. The first terms of $\{A_n\}$ for small values of $k$ appear in~\cref{tab:An}. The growth of $\{A_n\}$ is considerably slower than the growth of $\{B_n\}$.

\begin{table}[tb]
\centering
\small
\begin{tabular}{|c | c | c | c | c|}
\hline
    & \multicolumn{4}{c|}{$k$} \\ \cline{2-5}
$n$ &    2 &    3  &  4 &   5 \\ 
\hline
1  &   1 &   1 &   1 &   1 \\
2  &   2 &   2 &   2 &   2 \\
3  &   3 &   3 &   3 &   3 \\
4  &   4 &   5 &   4 &   4 \\
5  &   6 &   7 &   7 &   5 \\
6  &   7 &   8 &  10 &   9 \\
7  &  10 &  13 &  12 &  13 \\
8  &  11 &  16 &  13 &  16 \\
9  &  20 &  17 &  22 &  18 \\
10 &  22 &  40 &  28 &  19 \\
11 &  28 &  49 &  31 &  33 \\
12 &  29 &  51 &  32 &  43 \\
13 &  53 &  98 &  51 &  49 \\
14 &  56 & 111 &  61 &  52 \\
15 &  66 & 113 &  65 &  53 \\
16 &  67 & 148 &  66 &  87 \\
17 & 202 & 156 & 238 & 107 \\
18 & 211 & 157 & 302 & 117 \\
19 & 252 & 487 & 320 & 121 \\
20 & 254 & 542 & 323 & 122 \\
\hline
\end{tabular}
\vspace{-.2cm}
\caption{The minimal rank $A_n$ of at-most-$k$-furcating trees with $n$ leaves, as obtained by~\cref{eq:recurrence_An}. For $k=2$, the values of $A_n$ follow sequence A354970 in the On-Line Encyclopedia of Integer Sequence.}
\label{tab:An}
\end{table}

\subsection{Asymptotics} 
\label{subsec:at_most_k_asymptotics}

We now analyze the asymptotic growth of the minimal rank $A_n$ and maximal rank $B_n$. Analogously to~\cref{prop:minimal_rank_maximal_rank_related}, we establish a link between the minimal rank $A_{k^n}$ among at-most-$k$-furcating trees with $k^n$ leaves and the maximal rank $B_n$ among at-most-$k$-furcating trees with $n$ leaves.

\begin{proposition}
\label{prop:minimal_rank_maximal_rank_related46}
$A_{k^n} + 1= B_{n+2}$ for $n \geq 0$.
\end{proposition}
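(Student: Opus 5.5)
The plan is to mirror the proof of \cref{prop:minimal_rank_maximal_rank_related}. I would first specialize the recurrence for $A_n$ to indices that are powers of $k$, and then induct on $n$ against the recurrence \cref{eq:recurrence_Bn} for $B_n$.

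\textbf{Step 1: specialize the recurrence.} For $n \geq 1$, the index $k^n$ is divisible by $k$. Then $\lfloor k^n/k \rfloor = \lceil k^n/k \rceil = k^{n-1}$, and the quantity $k + k\lfloor k^n/k\rfloor - k^n$ equals $k$. So the simplified relation \cref{eq:simpler} becomes
\begin{equation*}
A_{k^n} = -A_{k^{n-1}} + \binom{A_{k^{n-1}} + k}{k}, \qquad n \geq 1.
\end{equation*}
The one point to check carefully is that the general formula \cref{eq:recurrence_An} really collapses this way when $k \mid n$. Here the second and third binomial coefficients are identical because $\lfloor n/k\rfloor = \lceil n/k \rceil$, so they cancel. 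The surviving coefficient has upper entry $A_{n/k} + k$ and lower entry $k$.

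\textbf{Step 2: base case $n=0$.} We have $A_{k^0} + 1 = A_1 + 1 = 2$. Also $B_2 = 2$, by \cref{eq:recurrence_Bn} with $B_1 = 1$: $2 - 1 + \binom{k}{k} = 2$. Alternatively, $B_2 = 2$ can be read off directly from the computation $f(Y_2^{\ast}) = 2$ in the proof of \cref{prop:up_to_k_maximal_increasing}.

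\textbf{Step 3: inductive step.} Assume $A_{k^{n-1}} + 1 = B_{n+1}$ for some $n \geq 1$, and substitute $A_{k^{n-1}} = B_{n+1} - 1$ into Step 1:
\begin{align*}
A_{k^n} + 1 &= 1 - (B_{n+1} - 1) + \binom{B_{n+1} - 1 + k}{k} \\
&= 2 - B_{n+1} + \binom{B_{n+1} + k - 1}{k} = B_{n+2},
\end{align*}
where the last equality is \cref{eq:recurrence_Bn}. This completes the induction.

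I expect no real obstacle, since the argument is purely algebraic. The only delicate point is the bookkeeping in Step 1, namely confirming that the surviving binomial term has the form $\binom{A_{k^{n-1}}+k}{k}$. As a sanity check, for $k=3$ the tables give $A_3 + 1 = 4 = B_3$ and $A_9 + 1 = 18 = B_4$.
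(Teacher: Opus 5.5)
Your proposal is correct and is essentially the paper's proof: specialize \cref{eq:recurrence_An} to the index $k^n$ to get $A_{k^n} = -A_{k^{n-1}} + \binom{A_{k^{n-1}}+k}{k}$, verify $A_1+1 = 2 = B_2$, and substitute the inductive hypothesis $A_{k^{n-1}} = B_{n+1}-1$ into \cref{eq:recurrence_Bn}. One small bookkeeping correction, which does not affect your conclusion: the second and third binomials in \cref{eq:recurrence_An} both already carry the index $\lceil n/k\rceil$, so they coincide because $k + k\lfloor n/k\rfloor - n = k$, not because the floor equals the ceiling; when $k \mid n$ all three binomials equal $\binom{A_{n/k}+k}{k}$ anyway.
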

\begin{proof}
We induct on $n$. For the base case of $n=0$, $A_1+1=B_2$, as $A_1=1$ and $B_2=2$.

For the inductive hypothesis, given $n \geq 1$, suppose $A_{k^{n-1}} + 1 = B_{n+1}$. By \cref{eq:recurrence_Bn}, \cref{eq:recurrence_An}, and the inductive hypothesis, 
\begin{align*}
A_{k^n} + 1 &= -A_{k^{n-1}} + \binom{A_{k^{n-1}} + k}{k} + 1 = -(B_{n + 1} - 1) + \binom{B_{n + 1} - 1 + k}{k} + 1 \\ 
&= 2-B_{n+1} + \binom{B_{n + 1} + k - 1}{k} = B_{n+2}.
\end{align*}
The induction is complete.
\end{proof}

\medskip
We now obtain the asymptotic growth of $B_n$. The growth of $\{B_n\}$ has the same form as the growth of $\{b_n\}$ except with a different base. The base $\gamma_k$ depends on $k$.

\begin{theorem} 
\label{thm:asymptotic_B_n}
$B_n \sim (k!)^{\frac{1}{k-1}} \gamma_k^{(k^n)}$ for a constant $\gamma_k$ depending only on $k \geq 2$.
\end{theorem}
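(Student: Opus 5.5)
The plan is to follow the Aho--Sloane argument used for \cref{thm:asymptotic_b_n}, applied to the recurrence \cref{eq:recurrence_Bn}, $B_n = 2 - B_{n-1} + \binom{B_{n-1}+k-1}{k}$ with $B_1 = 1$. The one structural difference from the strictly $k$-furcating case is the linear term $-B_{n-1}$, which cannot be absorbed exactly by a shift the way $b_n - 2$ was. So rather than looking for an exact change of variable, I will work directly with $B_n$ and treat everything below the leading term as a perturbation.

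First expand
\[
\binom{B_{n-1}+k-1}{k} = \frac{1}{k!}\bigl(B_{n-1}^k + Q_k(B_{n-1})\bigr),
\]
where $Q_k(x) = (x+k-1)(x+k-2)\cdots x - x^k$ is a polynomial of degree $k-1$ with nonnegative coefficients. This gives
\[
k!\,\frac{B_n}{B_{n-1}^k} = 1 + \frac{Q_k(B_{n-1}) + k!\,(2 - B_{n-1})}{B_{n-1}^k}.
\]
Call the right side $1+\epsilon_{n-1}$. Since $Q_k$ has degree $k-1$ and $2 - B_{n-1}$ is linear, $\epsilon_{n-1} = O(1/B_{n-1})$ when $k \geq 2$.

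We need $B_n \to \infty$ with $B_n \geq 2$. This holds by \cref{prop:up_to_k_maximal_increasing} together with \cref{thm:at_most_k_equals_ast}, since the $B_n$ form a strictly increasing integer sequence. We also need $1+\epsilon_{n-1} > 0$ so that the logarithm is defined; this is immediate because the left side is positive.

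Set $\rho_{n-1} = \log(1+\epsilon_{n-1})$ and $w_n = \log B_n - \log(k!)/(k-1)$. Then $w_n = k w_{n-1} + \rho_{n-1}$. Unrolling from a fixed starting index $n_0$ (for instance $n_0 = 2$, where $B_2 = 2$) gives, as in \cref{eq:aho_sloane_recurrence_strictly_k},
\[
w_n = k^n\Bigl[w_{n_0}k^{-n_0} + \sum_{i\geq n_0} k^{-(i+1)}\rho_i\Bigr] - \sum_{i\geq n}k^{n-1-i}\rho_i.
\]
Define $\gamma_k = \exp\bigl(w_{n_0}k^{-n_0} + \sum_{i\ge n_0}k^{-(i+1)}\rho_i\bigr)$. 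The series converges absolutely because the $\rho_i$ are bounded: $|\rho_i| = O(1/B_i)$, and the sequence is eventually tiny.

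The last step is to show that the tail $\sum_{i\ge n}k^{n-1-i}\rho_i \to 0$. Here I will not rely on monotonicity of $\rho_i$, which may fail because $\epsilon_i$ can have either sign. Instead I bound
\[
\Bigl|\sum_{i\ge n}k^{n-1-i}\rho_i\Bigr| \le \frac{\sup_{i\ge n}|\rho_i|}{k-1} \to 0.
\]
Exponentiating then yields $B_n \sim (k!)^{1/(k-1)}\gamma_k^{(k^n)}$.

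The main obstacle is the sign issue. Because $\epsilon_i$ may be negative for small $i$, one must check that $1+\epsilon_i > 0$, which follows from positivity of $B_n$, and that $\rho_i \to 0$, which follows from $B_i \to \infty$. A secondary check is that $\gamma_k > 1$, as claimed in the abstract; this follows because $w_n \to \infty$, which forces the bracketed constant to be positive.
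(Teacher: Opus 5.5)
Your proof is correct and uses the same Aho--Sloane linearization as the paper. The one difference is that the paper first shifts to $D_n = B_n - 1$. This gives $D_n = \frac{1}{k!}[D_{n-1}^k + Q_k(D_{n-1})]$ with $Q_k(x) = (x+1)\cdots(x+k) - x^k - k!\,x$, which has positive coefficients. As a result the paper's error terms $\kappa_n$ are positive and decreasing, so the tail is bounded by $\kappa_n/(k-1)$. You instead keep $B_n$ and allow a signed error $\epsilon_n$, which really can be negative (e.g.\ for $k=2$ once $B_{n-1}>4$). You then bound the tail by $\sup_{i\ge n}|\rho_i|/(k-1)$, which works just as well and also cleanly yields $\gamma_k>1$.
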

\begin{proof}
Rewrite \cref{eq:recurrence_Bn} as $D_n = -D_{n-1} + \binom{D_{n-1}+k}{k}$, where $D_n = B_n-1$ for $n \geq 2$. Because $B_n \sim D_n$, it suffices to show that $D_n \sim (k!)^{\frac{1}{k-1}} \gamma_k^{(k^n)}$. A similar analysis as in the proof of~\cref{thm:asymptotic_b_n} can be performed. In particular, the recurrence can be written: 
\begin{align*}
D_n &= -D_{n-1} + \frac{1}{k!} \big[\big(D_{n-1} + k\big)\big(D_{n-1} + (k-1)\big) \ldots \big(D_{n-1} + 1\big)\big] = \frac{1}{k!}[D_{n-1}^{k} + Q_k(D_{n-1})],
\end{align*}
where $Q_k(x) = (x+1)(x+2)\cdots (x+k) - x^k - (k!)x$ is a polynomial with positive integer coefficients and degree $k-1$. 
Taking logarithms and setting $Y_n = \log D_n$, for $n \geq 3$:
\begin{align*}
Y_n - k Y_{n-1} = - \log(k!) + \kappa_{n-1},
\end{align*}
where $\kappa_{n-1} = \log [1 + Q_k(D_{n-1}) / D_{n-1}^k ]$. The quantity $\kappa_{n}$ is strictly decreasing and $\kappa_n \to 0$, as the polynomial $Q_k$ has degree less than $k$ and $D_n$ grows without bound.

The substitution
$Z_n = Y_n - \log(k!) / (k-1)$ removes the $\log(k!)$: 
\begin{align*}
Z_n = kZ_{n-1} + \kappa_{n-1}.
\end{align*} 
By the method of Aho \& Sloane~\cite{AhoSloane}, we obtain the same formula as \cref{eq:aho_sloane_recurrence_strictly_k}, replacing $z_3$ by $Z_3$ and $\rho_i$ by $\kappa_i$:
\begin{align*} 
\label{eq:aho_sloane_recurrence_at_most_k}
& Z_n = k^{n-3}Z_3 + \sum_{i=3}^{n-1} k^{n - i - 1} \kappa_{i} = k^n \bigg[Z_3k^{-3} + \sum_{i=3}^{\infty} k^{-(i+1)} \kappa_{i} \bigg] - \sum_{i=n}^{\infty} k^{n-1-i} \kappa_{i}.
\end{align*}
Taking exponents of both sides yields 
\begin{align*}
D_n \exp \bigg[ -\frac{\log(k!)}{k-1}\bigg] = e^{Z_n} & = \bigg[\exp(Z_3k^{-3}) \exp \bigg(\sum_{i=3}^{\infty} k^{-(i+1)} \kappa_i \bigg) \bigg]^{(k^n)} \exp \bigg(-\sum_{i=n}^{\infty} k^{n-1-i} \kappa_i \bigg) \\ 
&= \gamma_k^{(k^n)} \exp \bigg(-\sum_{i=n}^{\infty} k^{n-1-i} \kappa_i \bigg),
\end{align*} 
where $\gamma_k = \exp(Z_3k^{-3}) \, \exp (\sum_{i=3}^{\infty} k^{-(i+1)} \kappa_i )$. Notice that $\sum_{i=n}^{\infty} k^{n-1-i} \kappa_i \leq \kappa_n \sum_{i=n}^{\infty} k^{n-1-i} = \kappa_n / (k-1) \to 0$ because $\kappa_n$ is decreasing and has limit 0. Therefore, $D_n \sim (k!)^{\frac{1}{k-1}} \gamma_k^{(k^n)}$. 
\end{proof}

\begin{table}[tb]
\centering
\begin{tabular}{ |c|c|c| }
     \hline
     $k$ & $\gamma_k$ & $\lambda_k$\\
     \hline
     2 & $1.056528765669$ & $1.246020832983$\\
     3 & $1.025545765326$ & $1.254860390515$\\
     4 & $1.012449244990$ & $1.218911497608$\\
     5 & $1.006943308930$ & $1.188845750716$\\
     6 & $1.004260710708$ & $1.165394603276$\\
     7 & $1.002802404861$ & $1.146972413490$ \\
     \hline
    \end{tabular}
    \vspace{-.2cm}
    \captionof{table}{Values of $\gamma_k$ and $\lambda_k$, calculated using $\gamma_k = \exp(Z_3k^{-3}) \exp(\sum_{i=3}^{\infty} k^{-(i+1)} \kappa_i)$ and $\lambda_k = \gamma_k^{(k^2)}$. To calculate $\kappa_i$ and $Z_3$, we evaluate the first 12 terms $\{B_n\}$ via the recurrence~\cref{eq:recurrence_Bn}. The value $k$ denotes the maximal number of immediate descendants of an internal node; $\gamma_k$ is the base of the exponent in the growth $B_n \sim (k!)^{\frac{1}{k-1}} \gamma_k^{(k^n)}$ and $\lambda_k$ is the base of the exponent in the growth $A_{k^n} \sim (k!)^{\frac{1}{k-1}}\lambda_k^{(k^n)}$. Note that the calculation is accurate beyond 12 decimal places; we show the first 12 decimal places.}
    \label{table:gamma-lambda}
\end{table}

\medskip
The constant $\gamma_k$ can be approximated numerically for small $k$ by using the first 12 terms of the sequence $\{B_n\}$. The constants $\gamma_k$ for small $k$ appear in Table~\ref{table:gamma-lambda}. Observe that $\gamma_2 = \beta_2$ because an at-most-2-furcating tree is the same as a strictly 2-furcating tree. Also observe in the table that for $k \geq 2$, $\gamma_k > \gamma_{k+1}$; we conjecture that this result holds generally.

\begin{conjecture} 
\label{conj:decreasing-gamma}
For $k \geq 2$, the constant $\gamma_k$ satisfies $\gamma_{k+1} < \gamma_k$.
\end{conjecture}

The claim in \cref{conj:decreasing-gamma} is analogous to~\cref{lemma:beta_decreasing} for the strictly $k$-furcating case. We next state a corollary of \cref{thm:asymptotic_B_n} on the growth of $A_{k^n}$.
\begin{corollary}
For $k \geq 2$, $A_{k^n} \sim (k!)^{\frac{1}{k-1}} \lambda_k^{(k^n)}$, where $\lambda_k = \gamma_k^{(k^2)}$.
\end{corollary}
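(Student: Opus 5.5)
The corollary follows directly from \cref{prop:minimal_rank_maximal_rank_related46} and \cref{thm:asymptotic_B_n}, in the same way that the corresponding corollary for $a_{1+(k^n-1)/(k-1)}$ followed from \cref{prop:minimal_rank_maximal_rank_related} and \cref{thm:asymptotic_b_n}. By \cref{prop:minimal_rank_maximal_rank_related46}, $A_{k^n} = B_{n+2} - 1$ for all $n \geq 0$. Since $B_n \to \infty$ (it is strictly increasing by \cref{prop:up_to_k_maximal_increasing} and \cref{thm:at_most_k_equals_ast}), we get $A_{k^n} \sim A_{k^n}+1 = B_{n+2}$, because the ratio $(B_{n+2}-1)/B_{n+2}$ tends to $1$.

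Next apply \cref{thm:asymptotic_B_n} with index $n+2$. This gives $B_{n+2} \sim (k!)^{\frac{1}{k-1}} \gamma_k^{(k^{n+2})}$. The only algebra needed is the rewriting
\begin{align*}
\gamma_k^{(k^{n+2})} = \gamma_k^{(k^2 \cdot k^n)} = \big(\gamma_k^{(k^2)}\big)^{(k^n)} = \lambda_k^{(k^n)}.
\end{align*}
Chaining the two asymptotic equivalences, using transitivity of $\sim$, yields $A_{k^n} \sim (k!)^{\frac{1}{k-1}} \lambda_k^{(k^n)}$ with $\lambda_k = \gamma_k^{(k^2)}$.

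There is no substantive obstacle. The one point to state explicitly is that $B_n \to \infty$, which justifies dropping the additive $1$; the shift of index from $n$ to $n+2$ in \cref{thm:asymptotic_B_n} is harmless because that statement holds along the whole sequence.
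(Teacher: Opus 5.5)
Your proposal is correct and follows the paper's own proof: combine $A_{k^n}+1=B_{n+2}$ with the asymptotic for $B_n$ at index $n+2$, then rewrite $\gamma_k^{(k^{n+2})}=\bigl(\gamma_k^{(k^2)}\bigr)^{(k^n)}=\lambda_k^{(k^n)}$. Your remark that $B_n\to\infty$, which is what justifies $A_{k^n}\sim A_{k^n}+1$, is a detail the paper leaves implicit.
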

\begin{proof}
Because $A_{k^n} + 1 = B_{n+2}$ for $n \geq 0$, we have that:
\begin{align*}
A_{k^n} \sim A_{k^n} + 1 = B_{n+2} \sim (k!)^{\frac{1}{k-1}} \gamma_k^{(k^{n+2})} = (k!)^{\frac{1}{k-1}} (\gamma_k^{(k^2)})^{(k^n)} = (k!)^{\frac{1}{k-1}} \lambda^{(k^n)}.
\end{align*}
The result follows.
\end{proof}

\medskip
Values of $\lambda_k$ for small $k$ appear in Table~\ref{table:gamma-lambda}; $\lambda_2 = \alpha_2$ because an at-most-2-furcating tree is the same as a strictly 2-furcating tree. We can also observe in Table~\ref{table:gamma-lambda}, comparing to Table~\ref{table: beta-alpha}, that $\gamma_k > \beta_k$ for the values of $k \geq 3$ shown. We verify this inequality for all $k \geq 3$, showing that the maximal rank among at-most-$k$-furcating trees on $n$ leaves grows faster than the maximum among strictly $k$-furcating trees on $(n-1)(k-1)+1$ leaves. The proof is in Appendix~\ref{pf:gamma-beta-related}.
\begin{theorem} 
\label{thm:gamma-beta-comparison}
$\gamma_k \geq \beta_k$, with equality if and only if $k=2$.
\end{theorem}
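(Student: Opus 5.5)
The plan is to compare the two sequences directly through their normalized logarithms. From the proofs of \cref{thm:asymptotic_b_n} and \cref{thm:asymptotic_B_n} we have
\[
\log \beta_k = \lim_{n\to\infty} k^{-n}\Big[\log d_n - \tfrac{\log(k!)}{k-1}\Big], \qquad \log \gamma_k = \lim_{n\to\infty} k^{-n}\Big[\log D_n - \tfrac{\log(k!)}{k-1}\Big],
\]
with $d_n = b_n - 2$ and $D_n = B_n - 1$. So it suffices to compare $\log d_n$ and $\log D_n$ for every $n$, tracking the gap carefully enough to see that it survives division by $k^n$. The case $k=2$ is immediate: \cref{eq:recurrence_Bn} and \cref{eq:recurrence_bn} coincide, so $B_n=b_n$ and $\gamma_2=\beta_2$. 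For the rest, take $k\geq 3$.

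\textbf{Step 1 (termwise domination).} Both sequences satisfy recurrences driven by increasing maps:
\[
d_n = g(d_{n-1}), \quad g(x)=\binom{x+k}{k}; \qquad D_n = G(D_{n-1}), \quad G(x)=\binom{x+k}{k}-x.
\]
Since $g(x)\geq G(x)$, monotonicity alone cannot give $D_n\geq d_n$. What helps is that the $B$ sequence starts further ahead: $B_3=k+1>3=b_3$. I will therefore work with the starting values $d_3=1$ and $D_3=k$. I will show by induction that $D_n \geq g(d_n)=d_{n+1}$ for $n\geq 3$, i.e.\ the $D$ sequence is a full step ahead. The base case is $D_3=k\geq 1=d_4\cdot$? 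Here $d_4=\binom{k+1}{k}=k+1$, so I adjust the base: $D_4=\binom{2k}{k}-k$ versus $d_5=\binom{2k+1}{k}$. If the one-step lead fails there, I will fall back to the weaker claim $D_n\geq c\, d_n$ with a suitable ratio. The inductive step would need $G(x)\geq g(y)$ whenever $x\geq g(y)$ and $y\geq 1$, which holds if $G(g(y))\geq g(g(y))$ fails only by lower-order terms. This shows the one-step lead is probably too strong, and the right formulation is likely a multiplicative lead.

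\textbf{Step 2 (the gap survives normalization).} This is the main obstacle. Domination $D_n\geq d_n$ gives only $\gamma_k\geq\beta_k$. For strict inequality I need a lead of size $\log D_{n_0}-\log d_{n_0}=\delta>0$ that propagates multiplicatively.

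I will use the representation from \cref{eq:aho_sloane_recurrence_strictly_k} directly:
\[
\log\gamma_k-\log\beta_k = k^{-3}(Z_3-z_3)+\sum_{i\geq3}k^{-(i+1)}(\kappa_i-\rho_i).
\]
Here $Z_3-z_3=\log k>0$. Each term $\kappa_i-\rho_i = \log\frac{1+Q_k(D_i)/D_i^k}{1+P_k(d_i)/d_i^k}$ can be negative, but only by a small amount. Since $Q_k(x)=P_k(x)-k!\,x$ and the coefficients of $P_k$ are fixed, $Q_k(x)/x^k$ is decreasing for large $x$, and $D_i\geq d_i$.

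The key bound is therefore
\[
\rho_i-\kappa_i\leq \log\big(1+P_k(d_i)/d_i^k\big) \leq \rho_i.
\]
From this,
\[
\sum_{i\geq 3} k^{-(i+1)}(\rho_i-\kappa_i) \leq \sum_{i\geq3}k^{-(i+1)}\rho_i,
\]
and it remains to verify that this sum is $<k^{-3}\log k$. For $i=3$ we have $\rho_3=\log\binom{1+k}{k}=\log(k+1)$, contributing $k^{-4}\log(k+1)$. For $i=4$, $d_4=k+1$, so $\rho_4=\log\big(\binom{2k+1}{k}/(k+1)^k\big)$. The remaining terms decay geometrically. So the needed inequality reduces to an explicit estimate that is uniform in $k\geq3$. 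This estimate, including the case of small $k$ such as $k=3$ which can be checked numerically, is where I expect the real work to be.
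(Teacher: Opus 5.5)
\textbf{There is a genuine gap in Step 2.} Your Step 2 reduces everything to the sufficient condition $\sum_{i\geq 3}k^{-(i+1)}\rho_i<k^{-3}\log k$, which you get by discarding every $\kappa_i$. (Your ``key bound'' is just $\kappa_i\geq 0$, because its middle term equals $\rho_i$ by definition.) This condition is false at $k=3$.

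The root error is in your values of $\rho_i$. From $d_{i+1}=\binom{d_i+k}{k}=\frac{1}{k!}(d_i+1)\cdots(d_i+k)$ one gets $\rho_3=\log\bigl[(k+1)!\bigr]$, not $\log(k+1)$, and $\rho_4=\log\bigl[(k+2)(k+3)\cdots(2k+1)/(k+1)^k\bigr]$. You dropped the factor $k!$, which is also why your $\rho_4$ comes out negative ($\log(35/64)$ at $k=3$).

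With the correct values at $k=3$ ($d_3=1$, $d_4=4$, $d_5=35$), one finds $\sum_{i\geq 3}3^{-(i+1)}\rho_i\approx 0.0444>0.0407\approx 3^{-3}\log 3$. Equivalently, since $\sum_{i\geq 3}k^{-(i+1)}\rho_i=\log\beta_k+\log(k!)/[k^3(k-1)]$, your condition says $\log\beta_k<\log(k^{k-1}/k!)/[k^3(k-1)]$, which at $k=3$ reads $0.0112<0.0075$. So the numerical check you planned for $k=3$ would refute your estimate rather than confirm it.

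The theorem itself holds there ($\log\gamma_3-\log\beta_3\approx 0.014$), but you must keep at least $\kappa_3=\log\bigl[k!\,\bigl(\binom{2k}{k}-k\bigr)/k^k\bigr]$. At $k=3$, $3^{-4}\kappa_3=\log(102/27)/81\approx 0.0164$, which more than covers the deficit of about $0.0037$.

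Two further gaps remain. For $k\geq 4$ your crude inequality does seem to hold (about $0.0203<0.0217$ at $k=4$), but you give no uniform proof. Step 1 is also left unresolved: the one-step lead $D_n\geq d_{n+1}$ already fails at $n=3$, where $D_3=k<k+1=d_4$.

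\textbf{Comparison with the paper's route.} The multiplicative lead you mention at the end of Step 1 is exactly the paper's approach, and it makes the series estimates unnecessary. The paper proves $2b_n<B_n<b_{n+1}$ for all $n\geq 4$ and $k\geq 3$ by induction on the two recurrences, using the upper bound $B_{n-1}<b_n$ to absorb the $-B_{n-1}$ term in the recurrence for $B_n$. Then, because $B_n/b_n\sim(\gamma_k/\beta_k)^{k^n}$ by the two asymptotic theorems, $\gamma_k=\beta_k$ would force $B_n/b_n\to 1$ and $\gamma_k<\beta_k$ would force $B_n/b_n\to 0$. Both are incompatible with $B_n/b_n\geq 2$.

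Your series-comparison framework, once repaired, would give an explicit lower bound on $\log\gamma_k-\log\beta_k$. The cost is hand estimates that are tightest exactly at $k=3$, the case your current bound gets wrong.
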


\section{Discussion} 
\label{sec:discussion}

We have obtained the minimal and maximal ranks of unlabeled multifurcating rooted trees in the bijective schemes of Maranca \& Rosenberg~\cite{Maranca2024} that encode such trees with the positive integers. The results generalize corresponding results of~\cite{Rosenberg2021} on minimal and maximal ranks in the bifurcating case. 

For strictly $k$-furcating trees, $k \geq 2$, we obtained a recursive equation for the maximal (\cref{eq:recurrence_bn}) and minimal ranks among trees with $(n-1)(k-1)+1$ leaves, $n \geq 1$ (\cref{eq:recurrence_an}). We showed that asymptotically, the maximal rank grows superexponentially with $n$; the growth follows $b_n \sim (k!)^{\frac{1}{k-1}} \beta_k^{(k^n)}$ (\cref{thm:asymptotic_b_n}), where $\beta_k$ is a constant that depends on $k$~(\cref{table: beta-alpha}). The minimal rank among trees with $k^n$ leaves is one less than the maximal rank among trees with $(n+1)(k-1)+1$ leaves (\cref{prop:minimal_rank_maximal_rank_related}).

For at-most-$k$-furcating trees, we have obtained analogous results for the maximal (\cref{eq:recurrence_Bn}) and minimal ranks among trees with $n$ leaves (\cref{eq:recurrence_An}). Asymptotically, the maximal rank grows superexponentially with $n$; the growth follows $B_n \sim (k!)^{\frac{1}{k-1}} \gamma_k^{(k^n)}$ (\cref{thm:asymptotic_B_n}), where $\gamma_k$ depends on $k$~(\cref{table:gamma-lambda}). The minimal rank among trees with $k^n$ leaves is one less than the maximal rank among trees with $n+2$ leaves (\cref{prop:minimal_rank_maximal_rank_related46}).

The at-most-$k$-furcating trees include the strictly $k$-furcating trees, and we have observed numerically that for small $k \geq 3$, the growth constants satisfy $\gamma_k > \beta_k$ (\cref{table: beta-alpha}, \cref{table:gamma-lambda}), so that the growth of the number of at-most-$k$-furcating trees with $n$ leaves empirically exceeds the growth of the number of strictly $k$-furcating trees with $(n-1)(k-1)+1$ leaves for $k \geq 3$. Indeed, Theorem~\ref{thm:gamma-beta-comparison} demonstrates that $\gamma_k > \beta_k$ for $k \geq 3$. The constants $\beta_k$ decrease with $k$ (\cref{lemma:beta_decreasing}), and we conjecture that the constants $\gamma_k$ decrease with $k$ as well (\cref{conj:decreasing-gamma}).

In both types of multifurcating trees, tree ranks can serve as measures of tree balance. In particular, with a fixed number of leaves, a highly balanced tree has minimal rank, and a highly imbalanced tree has maximal rank. This phenomenon was observed by~\cite{Rosenberg2021} in the bifurcating case, so that a measure $[\log f(t) - \log a_{m(t)}] / [\log b_{m(t)} - \log a_{m(t)}]$ can serve as an index that measures increasing imbalance of a tree $t$. An update of this proposal based on Devroye et al.~\cite{Devroye2025} instead suggests the use of $[\log_2 \log f(t) - \log_2 \log a_{m(t)}] /[\log_2 \log b_{m(t)} - \log_2 \log a_{m(t)}]$, as the numerical values of the ranks of different trees are comparable after the double logarithm is taken. Modifying this suggestion for $k$-furcation, our results here suggests that a comparably sensible index in the setting of multifurcating trees is 
\begin{equation}
\label{eq:balance}
\frac{\log_k \log f(t) - \log_k \log a_{m(t)}}{\log_k \log b_{m(t)} - \log_k \log a_{m(t)}},
\end{equation}
According to this scheme, the minimally balanced tree has value 1 for the index and the maximally balanced tree has value 0.

In mathematical evolutionary biology, multifurcating trees have the potential to describe biological phenomena with rapid diversification and large numbers of offspring~\cite{Eldon20}, phenomena that that are not as easily modeled with bifurcating trees. The study adds to recent interest in the mathematical understanding of multifurcating trees in evolutionary biology~\cite{DickeyAndRosenberg25, Maranca2024, MirEtAl18, Wirtz24, ZhangAndPalacios25}, providing further information about encoding schemes for two multifurcating tree classes, strictly $k$-furcating and at-most-$k$-furcating, with potential for application in diverse types of evolutionary studies.

\vskip .3cm
\noindent {\bf Acknowledgements.} We acknowledge support from National Science Foundation grant DMS-2450005.
\vskip .3cm
\noindent {\bf Data Availability.} This article has no associated data.
\vskip .3cm
\noindent {\bf Conflicts of Interest.} The authors have no conflicts of interest to declare.


\appendix
\renewcommand{\thesubsection}{Appendix \Alph{subsection}}
\label{sec:appendix}
\counterwithin{theorem}{section}

\section{Proof of~\cref{lemma:strict_tree_order}} 
\label{pf:strict_tree_order}

To prove \cref{lemma:strict_tree_order}, we first state an elementary result that we call the ``sum-peeling lemma.'' This lemma is used repeatedly in Appendix~\ref{pf:strict_tree_order} and Appendix~\ref{pf:up_to_k_tree_order} to remove the outermost term of a nested sum. Such sums appear frequently in the proofs of~\cref{lemma:strict_tree_order} and~\cref{lemma:up_to_k_tree_order}.

\begin{lemma}[Sum-peeling lemma] 
\label{lemma: sum-peeling lemma}
If $a_k, a_{k+1}, \ldots, a_x, b_x$ are all non-negative integers, and $a_k \leq a_{k-1} \leq \ldots \leq a_{x} \leq b_x-1$, then 
\begin{align} 
\label{ineq: sum-peeling-lemma}
\sum_{y_x=a_x}^{b_{x}-1} \sum_{y_{x+1}=a_{x+1}}^{y_x} \ldots \sum_{y_k=a_k}^{y_{k-1}} 1 \geq \sum_{y_{x+1}=a_{x+1}}^{b_x-1} \sum_{y_{x+2}=a_{x+2}}^{y_{x+1}} \ldots \sum_{y_k=a_k}^{y_{k-1}} 1.
\end{align}
\end{lemma}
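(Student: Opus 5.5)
The plan is to observe that the left-hand side of \cref{ineq: sum-peeling-lemma} is a sum of nonnegative terms, one of which is exactly the right-hand side. Define, for an integer $u$, the inner count
\begin{align*}
F(u) = \sum_{y_{x+1}=a_{x+1}}^{u} \sum_{y_{x+2}=a_{x+2}}^{y_{x+1}} \ldots \sum_{y_k=a_k}^{y_{k-1}} 1 .
\end{align*}
The left-hand side then becomes $\sum_{y_x=a_x}^{b_x-1} F(y_x)$, and the right-hand side is $F(b_x-1)$.

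The key steps are as follows.
\begin{enumerate}[label=(\arabic*)]
\item Adopt the convention that a sum whose upper limit is below its lower limit is empty and equals $0$. Then, by induction from the innermost sum outward, every nested sum of $1$'s is a nonnegative integer. Concretely, $F(u)$ counts the chains of integers $(y_{x+1},\ldots,y_k)$ satisfying $u \geq y_{x+1} \geq \ldots \geq y_k$ and $y_j \geq a_j$, so $F(u) \geq 0$ for every $u$.
\item Use the hypothesis $a_x \leq b_x - 1$. It guarantees that the index set $\{a_x, a_x+1, \ldots, b_x-1\}$ of the outer sum is nonempty and contains $y_x = b_x-1$.
\item Split off the term $y_x = b_x-1$ and bound the rest below by $0$:
\begin{align*}
\sum_{y_x=a_x}^{b_x-1} F(y_x) = F(b_x-1) + \sum_{y_x=a_x}^{b_x-2} F(y_x) \geq F(b_x-1).
\end{align*}
This is exactly the claimed inequality.
\end{enumerate}

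The only point needing care is step (1), specifically the convention for empty sums and the monotonic ordering of the $a_j$. If the reader prefers not to rely on empty sums, the ordering hypothesis on the $a_j$, read as ensuring each lower limit is no larger than the relevant upper limit along the chain ending at $b_x-1$, shows that all ranges appearing in $F(b_x-1)$ are genuine. Either way, each summand is a count of integer chains and hence nonnegative, so no real obstacle arises beyond stating this cleanly.
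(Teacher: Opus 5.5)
Your proposal is correct and matches the paper's proof: both keep only the $y_x = b_x-1$ term of the outer sum, which exists because $a_x \leq b_x-1$, and discard the remaining terms as nonnegative. Your argument that each inner nested sum is a nonnegative count of integer chains just makes explicit a step the paper leaves implicit.
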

\begin{proof}
Because $a_x \leq b_x-1$, we have 
\begin{align*}
\sum_{y_{x} = a_x}^{b_x-1} \sum_{y_{x+1}=a_{x+1}}^{y_{x}} \ldots \sum_{y_k=a_k}^{y_{k-1}} 1 \geq \sum_{y_{x} = b_x-1}^{b_x-1} \sum_{y_{x+1}=a_{x+1}}^{y_{x}} \sum_{y_{x+2}=a_{x+2}}^{y_{x+1}} \ldots \sum_{y_k=a_k}^{y_{k-1}} 1 = \sum_{y_{x+1}=a_{x+1}}^{b_x-1} \sum_{y_{x+2}=a_{x+2}}^{y_{x+1}} \ldots \sum_{y_k=a_k}^{y_{k-1}} 1. 
\end{align*}
This completes the proof.
\end{proof}

\medskip
We are now ready for the proof of \cref{lemma:strict_tree_order}.
\medskip

\begin{proof}
For the equality condition, if $f(t_1)=f(t_2)$, then because $f$ is bijective, $t_1=t_2$, and hence $K(t_1)=K(t_2)$. If $K(t_1) = K(t_2)$, then $t_1$ and $t_2$ have the same subtrees in canonical order. Hence, $t_1=t_2$ and $f(t_1) = f(t_2)$. 
 
Next, suppose $K(t_1) <_D K(t_2)$. Let $K(t_1) = (a_1, a_2, \ldots, a_k)$ and $K(t_2) = (b_1, b_2, \ldots, b_k)$, and let $x$ be the smallest index with $a_x \neq b_x$. Because $K(t_1) <_D K(t_2)$, we must have $a_x < b_x$. By definition (\cref{eq:rank_strictly_k}),
\begin{align*}
f(t_2) &= 2 + \binom{b_1 + k-2}{k} + \ldots + \binom{b_{x-1} + k - x}{k- x+2} + \binom{b_x + k-x-1}{k-x+1} +\ldots + \binom{b_k+1-2}{1} \\ 
&= 2 + \binom{a_1 + k-2}{k} + \ldots + \binom{a_{x-1} + k - x}{k- x+2} + \binom{b_x + k-x-1}{k-x+1} + \ldots + \binom{b_k+1-2}{1} \\ 
& \geq 2 + \binom{a_1 + k-2}{k} + \ldots + \binom{a_{x-1} + k - x}{k- x+2} + \binom{b_x + k -x - 1}{k - x + 1}.
\end{align*}

For positive integers $y_n, y_{n+1}, \ldots, y_k$ bounded above by positive integer $y_{n-1}$, Lemma 4.1 of~\cite{Maranca2024} proves 
\begin{align} 
\label{eq:strictly_k_binomial_to_sum}
\binom{y_{n-1} + k - n}{k - n + 1} = \sum_{y_n=1}^{y_{n-1}} \sum_{y_{n+1}=1}^{y_n} \ldots \sum_{y_k=1}^{y_{k-1}} 1.
\end{align}
Applying~\cref{eq:strictly_k_binomial_to_sum} with the choice of $n=x$ and $y_{n-1} = b_x - 1$, 
\begin{align}
\binom{b_x + k -x - 1}{k - x + 1} &= 
\bigg( \sum_{y_{x} = 1}^{a_x-1} \sum_{y_{x+1}=1}^{y_{x}} \ldots \sum_{y_k=1}^{y_{k-1}} 1 \bigg) + \bigg( \sum_{y_{x} = a_x}^{b_x-1} \sum_{y_{x+1}=1}^{y_{x}} \ldots \sum_{y_k=1}^{y_{k-1}} 1 \bigg) \nonumber \\ 
&=\binom{a_x+k-x-1}{k-x+1} + \sum_{y_{x} = a_x}^{b_x-1} \sum_{y_{x+1}=1}^{y_{x}} \ldots \sum_{y_k=1}^{y_{k-1}} 1. 
\label{eq:binomial-sum-strictly-k} 
\end{align}

Applying the sum-peeling inequality in \cref{ineq: sum-peeling-lemma} to the sum in~\cref{eq:binomial-sum-strictly-k}, we have
\begin{align} 
\sum_{y_{x} = a_x}^{b_x-1} \sum_{y_{x+1}=1}^{y_{x}} \ldots \sum_{y_k=1}^{y_{k-1}} 1 
& \geq \sum_{y_{x+1} = 1}^{b_x-1} \sum_{y_{x+2}=1}^{y_{x+1}} \ldots \sum_{y_k=1}^{y_{k-1}} 1 \nonumber \\ 
&= \bigg( \sum_{y_{x+1}= 1}^{a_{x+1} - 1} \sum_{y_{x+2}=1}^{y_{x+1}} \ldots \sum_{y_k=1}^{y_{k-1}} 1 \bigg) + \bigg( \sum_{y_{x+1}= a_{x+1}}^{b_x-1} \sum_{y_{x+2}=1}^{y_{x+1}} \ldots \sum_{y_k=1}^{y_{k-1}} 1 \bigg). \nonumber
\end{align}
We then apply~\cref{eq:strictly_k_binomial_to_sum} to the first sum and \cref{ineq: sum-peeling-lemma} to the second, obtaining
\begin{align} 
 \sum_{y_{x} = a_x}^{b_x-1} \sum_{y_{x+1}=1}^{y_{x}} \ldots \sum_{y_k=1}^{y_{k-1}} 1 &= \binom{a_{x+1} + k - x - 2}{k - x } + \sum_{y_{x+1}= a_{x+1}}^{b_x-1} \sum_{y_{x+2}=1}^{y_{x+1}} \ldots \sum_{y_k=1}^{y_{k-1}} 1 \nonumber \\ 
& \geq \binom{a_{x+1} + k - x - 2}{k - x } + \sum_{y_{x+2}=1}^{b_{x} - 1} \sum_{y_{x+3}=1}^{y_{x+2}} \ldots \sum_{y_k=1}^{y_{k-1}} 1. \nonumber
\end{align}
Plugging back into~\cref{eq:binomial-sum-strictly-k},
\begin{align} 
\label{eq: split-peel-argument-two-times}
\binom{b_x+k-x-1}{k-x+1} \geq \binom{a_x+k-x-1}{k-x+1} + \binom{a_{x+1}+k-x-2}{k-x} + \sum_{y_{x+2}=1}^{b_{x} - 1} \sum_{y_{x+3}=1}^{y_{x+2}} \ldots \sum_{y_k=1}^{y_{k-1}} 1.
\end{align}
Splitting the left hand side of~\cref{eq:binomial-sum-strictly-k} twice and applying the sum-peeling-lemma twice has given us~\cref{eq: split-peel-argument-two-times}. We apply this ``split-and-peel'' argument $k-x-2$ more times, $k-x$ in total, leaving the summation
\begin{align}
\binom{b_x + k -x - 1}{k - x + 1} & \geq \binom{a_x+k-x-1}{k-x+1} + \binom{a_{x+1} + k - x - 2}{k - x } + \ldots + \binom{a_{k-1}}{2} + \sum_{y_k=1}^{b_{x} - 1} 1 \nonumber \\ 
& > \binom{a_x+k-x-1}{k-x+1} + \binom{a_{x+1} + k - x - 2}{k - x } + \ldots + \binom{a_{k-1}}{2} + \sum_{y_k=1}^{a_{k} - 1} 1 \nonumber \\ 
& = \binom{a_x+k-x-1}{k-x+1} + \binom{a_{x+1} + k - x - 2}{k - x } + \ldots + \binom{a_{k-1}}{2} + \binom{a_k - 1}{1}. 
\label{ineq:last-inequality-strictly-k}
\end{align}
In the second-to-last step, we have used the fact that $a_x < b_x$ and $a_k \leq a_x$. Therefore, by inequality (\ref{ineq:last-inequality-strictly-k}), 
\begin{align*}
f(t_2) &= 2 + \binom{a_1 + k-2}{k} + \ldots + \binom{a_{x-1} + k - x}{k- x+2} + \binom{b_x + k -x - 1}{k - x + 1} +\ldots + \binom{b_k+1-2}{1} \\ 
& \geq 2 + \binom{a_1 + k-2}{k} + \ldots + \binom{a_{x-1} + k - x}{k- x+2} + \binom{b_x + k -x - 1}{k - x + 1} \\ 
& > 2 + \binom{a_1 + k-2}{k} + \ldots + \binom{a_{x-1} + k - x}{k- x+2} \\ 
& \quad + \left[\binom{a_x+k-x-1}{k-x+1} + \binom{a_{x+1} + k - x - 2}{k - x } + \ldots + \binom{a_{k-1}}{2} + \binom{a_k - 1}{1} \right] \\ 
&= f(t_1).
\end{align*}

For the converse, suppose that $f(t_1) < f(t_2)$. Assume for the sake of contradiction that $K(t_1) \geq_D K(t_2)$. The assumption $K(t_1) \leq_D K(t_2)$ in the argument above can be replaced by $K(t_2) \leq_D K(t_1)$, in which case the conclusion will be $f(t_2) \leq f(t_1)$. But this statement contradicts the assumption $f(t_1) < f(t_2)$, meaning that in fact, $K(t_1) <_D K(t_2)$. 
\end{proof}

\section{Proof of~\cref{lemma:up_to_k_tree_order}}
\label{pf:up_to_k_tree_order}

We prove~\cref{lemma:up_to_k_tree_order}, the at-most-$k$-furcating counterpart of~\cref{lemma:strict_tree_order} demonstrated in Appendix~\ref{pf:strict_tree_order}. The proof is another application of~\cref{ineq: sum-peeling-lemma}. Some binomial identities are needed for the proof. One property that quickly follows from Lemma 5.1 of~\cite{Maranca2024} is that 
\begin{align} 
\label{eq:binom_coeff_at_most_k}
& \sum_{y_x = 1}^{b_x-1} \sum_{y_{x+1} = 1}^{y_{x}} \sum_{y_{x+2} = 0}^{y_{x+1}} \ldots \sum_{y_k=0}^{y_{k-1}} 1 = \binom{b_x + k -x}{k - x + 1} - b_x.
\end{align} 
Noting that $b_x$ terms $(y_x, y_{x+1}, \ldots, y_k)$ have the form $(\ast, 0, \ldots, 0),$ where $\ast$ is an integer in $[0, b_x-1]$,~\cref{eq:binom_coeff_at_most_k} follows from the equation in Lemma 5.1 of~\cite{Maranca2024}, 
\begin{align} 
\label{eq:sum_to_binom_at_most_k}
& \sum_{y_x = 0}^{b_x-1} \sum_{y_{x+1} = 0}^{y_{x}} \sum_{y_{x+2} = 0}^{y_{x+1}} \ldots \sum_{y_k=0}^{y_{k-1}} 1 = \binom{b_x + k -x}{k - x + 1},
\end{align}
noting that 
\begin{align} 
\label{eq:second_sum_to_binom_at_most_k}
& \sum_{y_x = 1}^{b_x-1} \sum_{y_{x+1} = 0}^{y_{x}} \sum_{y_{x+2} = 0}^{y_{x+1}} \ldots \sum_{y_k=0}^{y_{k-1}} 1 = \binom{b_x + k -x}{k - x + 1} - 1.
\end{align}

The following inequality is the workhorse of the proof of Lemma~\ref{lemma:up_to_k_tree_order}.
\begin{lemma} 
\label{lemma: appendix-b-helper-lemma}
If $a_k, a_{k+1}, \ldots, a_x, b_x$ are all positive integers, and $a_k \leq a_{k-1} \leq \ldots \leq a_x \leq b_x-1$, then 
\begin{align} 
\label{ineq: binom-to-sum-comparison}
\binom{b_x + k - x}{k - x + 1} > \sum_{i=1}^{k-x+1} \binom{a_{k-i+1}+i-1}{i}.
\end{align}
\end{lemma}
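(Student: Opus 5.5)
The plan is to run the same ``split-and-peel'' argument used in Appendix~\ref{pf:strict_tree_order}, but now with lower summation limits $0$ in place of $1$ and with \cref{eq:sum_to_binom_at_most_k} in place of \cref{eq:strictly_k_binomial_to_sum}. Concretely, \cref{eq:sum_to_binom_at_most_k} gives, for each index $j$ and integer $c \geq 1$,
\begin{align*}
\binom{c + k - j}{k-j+1} = \sum_{y_j=0}^{c-1}\sum_{y_{j+1}=0}^{y_j}\cdots\sum_{y_k=0}^{y_{k-1}} 1 ,
\end{align*}
and the term $\binom{a_{k-i+1}+i-1}{i}$ on the right side of \cref{ineq: binom-to-sum-comparison}, with $j=k-i+1$, is exactly this nested count with $c=a_j$. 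So I will aim to show that the nested count with upper limit $b_x-1$ strictly exceeds $\sum_{j=x}^{k}(\text{count with } c=a_j)$.

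\textbf{First step.} Split the outermost sum for $\binom{b_x+k-x}{k-x+1}$ at $y_x=a_x$. The range $0\le y_x\le a_x-1$ contributes exactly $\binom{a_x+k-x}{k-x+1}$, the $i=k-x+1$ term. The remaining range $a_x\le y_x\le b_x-1$ is nonempty because $a_x\le b_x-1$. By the sum-peeling lemma (\cref{lemma: sum-peeling lemma}, whose hypotheses hold with all lower limits $0$ and which only drops nonnegative terms), this remainder is at least the sum with $y_x$ fixed to $b_x-1$, namely
\begin{align*}
\sum_{y_{x+1}=0}^{b_x-1}\sum_{y_{x+2}=0}^{y_{x+1}}\cdots\sum_{y_k=0}^{y_{k-1}}1 .
\end{align*}
Next split this at $y_{x+1}=a_{x+1}$, which is legitimate since $a_{x+1}\le a_x\le b_x-1$. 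The lower part equals $\binom{a_{x+1}+k-x-1}{k-x}$, which is the next term, and the upper part is peeled again.

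\textbf{Iteration.} Repeat $k-x$ times in total. We reach
\begin{align*}
\binom{b_x+k-x}{k-x+1}\ \ge\ \sum_{j=x}^{k-1}\binom{a_j+k-j}{k-j+1} + \sum_{y_k=0}^{b_x-1}1,
\end{align*}
and the last sum is $b_x$. Writing $\binom{a_k+0}{1}=a_k$ for the $i=1$ term, strictness follows from $b_x > a_x \ge a_k$. Re-indexing $j=k-i+1$ matches the right side of \cref{ineq: binom-to-sum-comparison}.

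\textbf{Main obstacle.} The real work is bookkeeping: checking that each split point $a_j$ lies within the current range, which uses monotonicity $a_j\le b_x-1$, and that the index offsets in the binomial coefficients line up with the $i$-indexing. I expect the last step, where the innermost single sum gives $b_x$ against $a_k$, to be where the strict inequality comes from. The degenerate case $x=k$ should be checked separately: it reduces to $b_k > a_k$.
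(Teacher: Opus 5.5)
Your proposal is correct and is essentially the paper's own proof. In both, you split the nested count from \cref{eq:sum_to_binom_at_most_k} at $a_j$, identify the lower piece as $\binom{a_j+k-j}{k-j+1}$, peel the upper piece with the sum-peeling lemma, iterate $k-x$ times, and obtain strictness from $\sum_{y_k=0}^{b_x-1}1=b_x>a_k$. Your indexing is, if anything, cleaner than the paper's, whose second split is written at $a_{x-1}$ where $a_{x+1}$ is meant.
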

\begin{proof}
We appeal to a similar strategy as in the proof of~\cref{lemma:strict_tree_order} in Appendix~\ref{pf:strict_tree_order}. In particular, we split the summation of~\cref{eq:sum_to_binom_at_most_k} into two terms. The sum-peeling lemma (\cref{lemma: sum-peeling lemma}) eliminates the outer sum of one of the terms, and~\cref{eq:sum_to_binom_at_most_k} simplifies the other. This ``split-and-peel'' argument gives 
\begin{align}
\binom{b_x + k - x}{k - x + 1}
&= \bigg( \sum_{y_x=a_x}^{b_x - 1} \sum_{y_{x+1}=0}^{y_x} \ldots \sum_{y_k=0}^{y_{k-1}}1 \bigg) + \bigg( \sum_{y_x=0}^{a_x - 1} \sum_{y_{x+1}=0}^{y_x} \ldots \sum_{y_k=0}^{y_{k-1}}1 \bigg) \nonumber \\
& \stackrel{(\ref{eq:sum_to_binom_at_most_k})}{=} \bigg( \sum_{y_x=a_x}^{b_x - 1} \sum_{y_{x+1}=0}^{y_x} \ldots \sum_{y_k=0}^{y_{k-1}}1 \bigg) + \binom{a_x+k-x}{k-x+1} \nonumber \\
& \stackrel{(\ref{ineq: sum-peeling-lemma})}{\geq} \bigg( \sum_{y_{x+1}=0}^{b_x-1} \sum_{y_{x+2}=0}^{y_{x+1}} \ldots \sum_{y_k=0}^{y_{k-1}} 1 \bigg) + \binom{a_x+k-x}{k-x+1}. 
\label{eqn: appendix-b-split-and-peel}
\end{align} 
Applying the split-and-peel argument to~\cref{eqn: appendix-b-split-and-peel}, we obtain 
\begin{align}
& \bigg( \sum_{y_{x+1}=0}^{b_x-1} \sum_{y_{x+2}=0}^{y_{x+1}} \ldots \sum_{y_k=0}^{y_{k-1}} 1 \bigg) + \binom{a_x+k-x}{k-x+1} \nonumber \\
&= \bigg( \sum_{y_{x+1}=a_{x-1}}^{b_x - 1} \sum_{y_{x+2}=0}^{y_{x+1}} \ldots \sum_{y_k=0}^{y_{k-1}} 1 \bigg) + \bigg( \sum_{y_{x+1}=0}^{a_{x-1}-1} \sum_{y_{x+2}=0}^{y_{x+1}} \ldots \sum_{y_k=0}^{y_{k-1}} 1 \bigg) + \binom{a_x+k-x}{k-x+1} \nonumber \\ 
& \stackrel{(\ref{eq:sum_to_binom_at_most_k})}{=} \bigg( \sum_{y_{x+1}=a_{x-1}}^{b_x - 1} \sum_{y_{x+2}=0}^{y_{x+1}} \ldots \sum_{y_k=0}^{y_{k-1}} 1 \bigg) + \binom{a_{x+1}+k-x-1}{k-x} + \binom{a_x+k-x}{k-x+1} \nonumber \\
& \stackrel{(\ref{ineq: sum-peeling-lemma})}{\geq} \bigg( \sum_{y_{x+2}=0}^{b_x-1} \sum_{y_{x+3}=0}^{y_{x+2}} \ldots \sum_{y_k=0}^{y_{k-1}} 1 \bigg) + \binom{a_{x+1}+k-x-1}{k-x} + \binom{a_x+k-x}{k-x+1}. 
\label{eqn: appendix-b-second-sum-and-peel}
\end{align}

Each application of the split-and-peel argument removes one layer of the summation and adds a new binomial term. Because $k-x-1$ summation layers remain in~\cref{eqn: appendix-b-second-sum-and-peel}, applying the split-and-peel method $k-x-2$ more times, $k-x$ times in total, gives 
\begin{align*}
& \bigg( \sum_{y_{x+2}=0}^{b_x-1} \sum_{y_{x+3}=0}^{y_{x+2}} \ldots \sum_{y_k=0}^{y_{k-1}} 1 \bigg) + \binom{a_{x+1}+k-x-1}{k-x} + \binom{a_x+k-x}{k-x+1} \\
& \geq \ldots \geq \bigg( \sum_{y_k=0}^{b_x-1} 1 \bigg) + \sum_{i=2}^{k-x+1} \binom{a_{k-i+1} + i - 1}{i} \\
&> \Big( \sum_{y_k=0}^{a_k-1} 1\Big) + \sum_{i=2}^{k-x+1} \binom{a_{k-i+1} + i - 1}{i} \\
&= a_k + \sum_{i=2}^{k-x+1} \binom{a_{k-i+1} + i - 1}{i} \\
&= \sum_{i=1}^{k-x+1} \binom{a_{k-i+1} + i - 1}{i},
\end{align*}
where the last inequality follows from $a_k \leq a_x < b_x$.
\end{proof}

\medskip

We are now ready for the proof of Lemma~\ref{lemma:up_to_k_tree_order}.

\medskip

\begin{proof}
The equality case is clear: if $f(t_1)=f(t_2)$, then by the bijectivity of $f$, $t_1=t_2$, so that $K(t_1)=K(t_2)$. If $K(t_1) = K(t_2)$, then $t_1$ and $t_2$ have the same trees in canonical order and are the same. Hence $f(t_1)=f(t_2)$. 

Suppose that $K(t_1) <_D K(t_2)$. Write $K(t_1) = (a_1, a_2, \ldots, a_k)$ and $K(t_2) = (b_1, b_2, \ldots, b_k)$, and let $x$ be the smallest index at which $a_x \neq b_x$. Because $K(t_1) <_D K(t_2)$, it follows that $a_x < b_x$. We divide into two cases based on the value of $x$.

\textit{Case (i): $x \geq 2$}. Because $x \geq 2$, it follows that $a_1=b_1$. Beginning from \cref{eq:rank_at_most_k}, 
\begin{align*}
f(t_2) &= -b_1 + 1 + \sum_{i=1}^{k} \binom{b_{k-i+1} + i -1}{i} \\
&= -a_1 + 1 + \sum_{i=1}^{k-x+1} \binom{b_{k-i+1} + i-1}{i} + \sum_{i=k-x+2}^{k} \binom{a_{k-i+1} + i-1}{i} \\ 
& \geq -a_1 + 1 + \binom{b_x + k - x}{k - x + 1} + \sum_{i=k-x+2}^{k} \binom{a_{k-i+1} + i-1}{i},
\end{align*} 
where in the last step we have extracted a single term from the summation, corresponding to $i=k-x+1$.

Now, by \cref{ineq: binom-to-sum-comparison}, because $a_k \leq a_{k-1} \leq \ldots \leq a_{x+1} \leq a_x < b_x$ by assumption,
\begin{align*}
& -a_1 + 1 + \binom{b_x+k-x}{k-x+1} + \sum_{i=k-x+2}^{k} \binom{a_{k-i+1}+i-1}{i} \\
& > -a_1 + 1 + \sum_{i=1}^{k-x+1} \binom{a_{k-i+1}+i-1}{i} + \sum_{i=k-x+2}^{k} \binom{a_{k-i+1}+i-1}{i} \\
&= -a_1 + 1 + \sum_{i=1}^{k} \binom{a_{k-i+1}+i-1}{i} = f(t_1).
\end{align*}

\textit{Case (ii): $x=1$}. Noting $b_2 \geq 1$ and applying \cref{eq:binom_coeff_at_most_k},
\begin{align}
f(t_2) &= -b_1 + 1 + \sum_{i=1}^{k} \binom{b_{k-i+1} + i -1}{i} \nonumber \\
&= \bigg[ -b_1 + \binom{b_1 + k -1}{k} + 2\bigg] + \bigg[ \binom{b_2 + k -2}{k - 1} - 1 \bigg] + \sum_{i=1}^{k-2} \binom{b_{k-i+1} + i - 1}{i} \nonumber \\
& \geq -b_1 + \binom{b_1 + k -1}{k} + 2 \nonumber \\ 
& = \bigg( \sum_{y_1=1}^{b_1-1} \sum_{y_2=1}^{y_1} \sum_{y_3=0}^{y_2} \ldots \sum_{y_k=0}^{y_{k-1}} 1 \bigg) + 2 \nonumber \\
&= \bigg( \sum_{y_1=1}^{a_1-1} \sum_{y_2=1}^{y_1} \sum_{y_3=0}^{y_2} \ldots \sum_{y_k=0}^{y_{k-1}} 1 \bigg) + \bigg( \sum_{y_1=a_1}^{b_1-1} \sum_{y_2=1}^{y_1} \sum_{y_3=0}^{y_2} \ldots \sum_{y_k=0}^{y_{k-1}} 1 \bigg) + 2.
\label{eq: appendix-b-simplify-terms}
\end{align}
We simplify the leftmost summation of~\cref{eq: appendix-b-simplify-terms} using~\cref{eq:binom_coeff_at_most_k}. We simplify the rightmost summation by removing its outermost layer using the sum-peeling lemma (\cref{lemma: sum-peeling lemma}). Because $a_2 \leq a_1 < b_1$ in this case,
\begin{align}
f(t_2) & \stackrel{(\ref{ineq: sum-peeling-lemma}),(\ref{eq:binom_coeff_at_most_k})}{\geq} -a_1 + \binom{a_1 + k - 1}{k} + \bigg( \sum_{y_2=1}^{b_1-1} \sum_{y_3=0}^{y_2} \ldots \sum_{y_k=0}^{y_{k-1}} 1 \bigg) + 2 \nonumber \\
&= -a_1 + \binom{a_1 + k - 1}{k} + \bigg( \sum_{y_2=1}^{a_2-1} \sum_{y_3=0}^{y_2} \ldots \sum_{y_k=0}^{y_{k-1}} 1 \bigg) + \bigg( \sum_{y_2=a_2}^{b_1-1} \sum_{y_3=0}^{y_2} \ldots \sum_{y_k=0}^{y_{k-1}} 1 \bigg) + 2. 
\label{eqn: appendix-b-simplify-terms-part-ii} 
\end{align}

We next simplify the leftmost summation of~\cref{eqn: appendix-b-simplify-terms-part-ii} using~\cref{eq:second_sum_to_binom_at_most_k}. We simplify the rightmost summation by removing its outermost layer using the sum-peeling lemma. The peeling step produces a form for which~\cref{eq:sum_to_binom_at_most_k} applies, permitting application of~\cref{lemma: appendix-b-helper-lemma}, which applies because $a_k \leq \ldots a_4 \leq a_3 \leq b_1-1$.
\begin{align*}
f(t_2) & \stackrel{(\ref{ineq: sum-peeling-lemma}),(\ref{eq:second_sum_to_binom_at_most_k})}{\geq} -a_1 + \binom{a_1 + k - 1}{k} + \bigg[ \binom{a_2 + k - 2}{k - 1} - 1 + \bigg( \sum_{y_3=0}^{b_1 - 1} \sum_{y_4=0}^{y_3} \ldots \sum_{y_k=0}^{y_{k-1}} 1 \bigg) \bigg] + 2 \\ 
& \stackrel{(\ref{eq:sum_to_binom_at_most_k})}{=} -a_1 + \binom{a_1 + k - 1}{k} + \binom{a_2 + k - 2}{k - 1} - 1 + \binom{b_1+k-3}{k-2} + 2 \\
& \stackrel{(\ref{ineq: binom-to-sum-comparison})}{>} -a_1 + \binom{a_1 + k - 1}{k} + \binom{a_2 + k - 2}{k - 1} - 1 + \bigg[ \sum_{i=1}^{k-2} \binom{a_{k-i+1} + i -1}{i} \bigg] + 2 \\ 
&= -a_1 + 1 + \sum_{i=1}^{k} \binom{a_{k-i+1} + i - 1}{i} = f(t_1).
\end{align*}

With both cases established, to prove the converse --- namely that if $f(t_1) < f(t_2)$, then $K(t_1) <_D K(t_2)$ --- we show the contrapositive. Suppose that $K(t_1) \geq_D K(t_2)$. Replacing the assumption $K(t_1) \leq_D K(t_2)$ in the argument above by $K(t_2) \leq_D K(t_1)$, we conclude $f(t_2) \leq f(t_1)$, verifying the contrapositive. 
\end{proof}

\section{{Proof of $2 + \binom{x+k-2}{k} > x$ for all positive integers $x \geq 0$ and $k \geq 2$}}
\label{sec:proof_binomial_ineq}

\begin{proof}
The inequality is easily verified for $x = 0$, 1, and 2. Suppose $x \geq 3$. Then
\begin{align*}
    \binom{x-2+k}{k} &= \frac{(x-2+k)(x-3+k) \cdots x(x-1)}{k!} = \prod_{i=1}^{k} \left( \frac{x-2+i}{i} \right) = (x-1) \prod_{i=2}^{k} \left( \frac{x-2+i}{i} \right) \\ & \geq (x-1) \prod_{i=2}^{k} \left( \frac{3-2+i}{i} \right) = (x-1) \prod_{i=2}^{k} \left( \frac{i+1}{i} \right) > x-1 > x-2.
    \end{align*}
    Adding 2 to both sides yields the desired inequality.
\end{proof}

\section{Proof of~\cref{thm:strictly_min_equals_ast}}
\label{pf:strictly_min_equals_ast}

We prove that $z_n = z_n^{\ast}$, establishing that the constructed tree $z_n^{\ast}$ is the minimal-rank strictly $k$-furcating tree with $(n-1)(k-1)+1$ leaves. We first establish a simple statement: if an element of a list sorted in descending order is replaced by a smaller element, then lexicographically, the reordered list is less than the initial list.
\begin{lemma} 
\label{lemma: replacement-lemma}
For real numbers $a_1, a_2, \ldots, a_k$ with $a_1 \geq a_2 \geq \ldots \geq a_k$, Suppose the $i$th entry of $A = (a_1, a_2, \ldots, a_k)$ is replaced by $b < a_i$. If $B$ is the resulting $k$-tuple sorted in descending order, then $B <_D A$.
\end{lemma}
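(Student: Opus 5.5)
We prove \cref{lemma: replacement-lemma} by locating the first position at which $B$ and $A$ differ. Let $A' = (a_1, \ldots, a_{i-1}, b, a_{i+1}, \ldots, a_k)$ be the tuple after replacement, before sorting, and let $B = (c_1, c_2, \ldots, c_k)$ be its descending rearrangement. Let $j \geq 1$ be the smallest index with $a_j = a_i$. Since the entries of $A$ are in descending order, $a_1 \geq \ldots \geq a_{j-1} > a_i$.

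\textbf{The entries before position $j$ are unchanged.} The multiset of entries of $A'$ is that of $A$ with one copy of $a_i$ removed and $b$ inserted. All the entries $a_1, \ldots, a_{j-1}$ exceed $a_i > b$, and they remain present in $A'$. Every other entry of $A'$ is at most $a_i$: the remaining original entries $a_\ell$ with $\ell \geq j$, $\ell \neq i$, satisfy $a_\ell \leq a_i$, and the new entry satisfies $b < a_i$. So the $j-1$ largest entries of $A'$ are exactly $a_1, \ldots, a_{j-1}$, in the same order, and $c_\ell = a_\ell$ for $\ell < j$.

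\textbf{Position $j$ strictly decreases.} The entry $c_j$ is the largest entry of $A'$ other than $a_1, \ldots, a_{j-1}$, that is, the maximum of $b$ and the $a_\ell$ with $\ell \geq j$, $\ell \neq i$. The entries equal to $a_i$ in $A$ occupy positions $j, \ldots, j'$ for some $j' \geq i$. We split into two cases.
\begin{itemize}
\item If $j' > i$, there is another copy of $a_i$ among the remaining entries, so $c_j = a_i = a_j$. In that case we continue to the next differing index; this is handled by the general formulation below.
\item If $j' = i$, meaning $a_i$ is the last copy of its value, then $c_j = \max(b, a_{j'+1}, \ldots)$ restricted to positions $\ell \geq j$, $\ell \neq i$. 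This is less than $a_i$.
\end{itemize}

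To avoid this case split, we instead take $j$ to be the \emph{largest} index with $a_j = a_i$. Then $a_1, \ldots, a_j$ are each at least $a_i$, and among them all but one copy of $a_i$ survive in $A'$. Hence $c_\ell = a_\ell$ for $\ell < j$: the surviving entries $\geq a_i$ are $a_1, \ldots, a_{j-1}$ with multiplicities intact, because removing one copy of $a_i$ from a block of copies just shortens the block by one. The remaining entries of $A'$ are $b$ and $a_{j+1}, \ldots, a_k$, all strictly less than $a_i$. Therefore $c_j = \max(b, a_{j+1}) < a_i = a_j$ (with $c_j = b$ if $j = k$). So $B$ and $A$ first differ at index $j$ with $c_j < a_j$, and $B <_D A$ by the definition of $\leq_D$.

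The only delicate point is the bookkeeping with repeated values equal to $a_i$. Choosing $j$ as the last occurrence of the value $a_i$ resolves it cleanly. The argument works for arbitrary real entries, since the definition of $<_D$ uses only order comparisons.
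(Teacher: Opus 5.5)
Your final argument, which takes $j$ to be the \emph{last} index with $a_j = a_i$, is correct. It uses the same underlying idea as the paper (find the first coordinate where the sorted tuple $B$ departs from $A$, and show $B$ is smaller there), but organizes it differently.

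The paper only notes that $A$ and $B$ agree in the first $i-1$ coordinates. It then writes $B$ out explicitly, with $a_i$ deleted and $b$ inserted either at the end (when $b<a_k$) or just before the first entry $a_j \le b$. Next it compares the shifted block $a_{i+1},a_{i+2},\ldots$ of $B$ against $a_i,a_{i+1},\ldots$ of $A$, splitting into subcases. If some inequality in that block is strict, that position decides. If all are equalities, the decisive comparison is between $b$ and the entry of $A$ at the position where $b$ lands.

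You instead pin down the first differing index exactly, using a multiset argument. The entries of the modified tuple that are at least $a_i$ are precisely $a_1,\ldots,a_{j-1}$, and every other entry is strictly below $a_i$. Hence $c_\ell=a_\ell$ for $\ell<j$ and $c_j<a_j$. This removes both of the paper's case splits and never requires locating where $b$ lands. The paper's version has the modest advantage of displaying $B$ explicitly.

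You should delete the preliminary paragraph that takes $j$ to be the \emph{first} occurrence of the value $a_i$. Its second bullet is false as stated: if $j' = i$ but the value $a_i$ also occurs before position $i$ (so $j<i$), then $c_j=a_i$, not something smaller. Your final argument does not rely on that paragraph.
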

\begin{proof}
Because $a_1 \geq a_2 \geq \ldots \geq a_i > b$, $A$ and $B$ agree in the first $i-1$ coordinates. Therefore, the first coordinate at which $A$ and $B$ differ---where the lexicographic order is determined---is at least $i$. 

Suppose $b < a_k$. Then $B = (a_1, a_2, \ldots, a_{i-1}, a_{i+1}, a_{i+2}, \ldots, a_{k}, b)$. By definition, $a_i \geq a_{i+1} \geq a_{i+2} \geq \ldots \geq a_k$ is in descending order, and if one of the inequalities is strict, then $A >_D B$. Otherwise, if $a_i=a_{i+1}=a_{i+2}=\ldots=a_k$, then in the $k$th coordinate, $a_k > b$ implies $A >_D B$.

Otherwise, let $j \leq k$ be the smallest index such that $a_{j-1} > b \geq a_j$. In this case, 
\begin{align*}
B = (a_1, a_2, \ldots, a_{i-1}, a_{i+1}, a_{i+2}, \ldots, a_{j-1}, b, a_{j}, a_{j+1}, \ldots, a_k).
\end{align*}
Once again, the sequence $a_i \geq a_{i+1} \geq \ldots \geq a_{j-1}$ is in descending order, and if one of those inequalities is strict,  then $A >_D B$. Otherwise, if $a_i = a_{i+1}=\ldots=a_{j-1}$, then the $(j-1)$-th coordinate satisfying $a_{j-1} > b$ implies $A >_D B.$
\end{proof}

\medskip
We now provide the proof of \cref{thm:strictly_min_equals_ast}.
\medskip

\begin{proof} 
We induct on $n$. The base case $n=1$ is trivial, as only one tree has a single leaf: $z_1 = z_1^{\ast}$. 

For the inductive hypothesis, suppose that $z_k = z_k^{\ast}$ for each $k$ with $1 \leq k \leq n-1$. Consider the tree of minimal rank $z_n$ with subtrees $j_1,j_2,\ldots,j_k$ in canonical order, so that $f(j_1) \geq f(j_2) \geq \ldots \geq f(j_k)$ and $K(z_n) = \big(f(j_1), f(j_2), \ldots, f(j_k)\big)$.

We first argue that for each $i$, $j_i$ is the tree of minimal rank for its number of leaves, $m(j_i)$, or $j_i=z^{\ast}_{(m(j_i) + k - 2) / (k-1)}$ for all $i$, $1 \leq i \leq k$. Assume, for the sake of contradiction, that for some $i$, $j_i$ and $z^{\ast}_{(m(j_i) + k -2)/(k-1)}$ are distinct trees; noting that the number of leaves of $z^{\ast}_{(m(j_i) + k -2)/(k-1)}$ is $(k-1) \big(\big(m(j_i) + k -2 \big)/(k-1) - 1 \big) + 1 = m(j_i)$, they have the same number of leaves. The tree $t_i'$ defined as tree $z_n$ with subtree $j_i$ replaced by $z^{\ast}_{(m(j_i)+k-2)/(k-1)}$ is then a distinct tree from $t$. Because $f(z^{\ast}_{(m(j_i)+k-2)/(k-1)}) < f(j_i)$ by the inductive hypothesis, Lemma~\ref{lemma: replacement-lemma} implies that the lexicographic orderings satisfy $K(t_i') <_D K(z_n)$. But then $f(t_i') < f(z_n)$ by~\cref{lemma:strict_tree_order}, contradicting the rank-minimality of $z_n$ among trees with $(n-1)(k-1)+1$ leaves. We conclude that $j_i$ and $z^{\ast}_{(m(j_i)+k-2)/(k-1)}$ are the same tree for all $i$, $1 \leq i \leq k$.

Thus far, we have deduced that $K(z_n) = \big(f(z_{(m(j_1)+k-2) / (k-1)}^{\ast}), f(z_{(m(j_2)+k-2) / (k-1)}^{\ast}), \ldots, f(z_{(m(j_k)+k-2) / (k-1)}^{\ast}) \big)$, with the restriction that $\sum_{i=1}^{k} m(j_i) = (n-1)(k-1)+1$. Furthermore, by~\cref{prop:strict_min_increasing}, for $j_1,j_2,\ldots,j_k$ to be a canonical ordering, we must have that $m(j_1) \geq m(j_2) \geq \ldots \geq m(j_k)$. We claim that $K(z_n) \geq_D K(z_n^{\ast})$, which will imply that $K(z_n) = K(z_n^{\ast}),$ or equivalently, $f(z_n) = f(z_n^{\ast})$. 

Assume for contradiction that $K(z_n) <_D K(z_n^{\ast})$. We will show that $z_n$ then has strictly fewer than $m(z_n) = (n-1)(k-1)+1$ leaves. Let $x$ be the smallest index such that $K(z_n)$ differs from $K(z_n^{\ast})$. There are two possibilities for $x$.

\medskip
\noindent
\textit{Case (i): $x > n - 2 + k - k \lceil \frac{n-2}{k} \rceil $}. In this case, $j_1 = j_2 = \ldots = j_{n - 2 + k - k \lceil (n-2)/k \rceil} = z^{\ast}_{\lceil (n-2)/k \rceil + 1}$, and $j_{n - 1 + k - k \lceil (n-2)/k \rceil} = \ldots = j_{x-1} = z^{\ast}_{\lceil (n-2) / k \rceil}$. Because $j_x$ differs from $z^{\ast}_{\lceil (n-2) / k \rceil}$ and satisfies $f(j_x) < f(z^{\ast}_{\lceil (n-2) / k \rceil})$, it follows that $m(j_x) < m(z^{\ast}_{\lceil (n-2) / k \rceil})$ by~\cref{prop:strict_min_increasing}. Hence, 
\begin{align*}
m(z_n) &= \sum_{i=1}^{k} m(j_i) \\
& = \sum_{i=1}^{n-2+k-k\lceil (n-2)/k \rceil} m(z^{\ast}_{\lceil (n-2)/k \rceil+ 1} ) + \sum_{i=n-1+k-k\lceil (n-2)/k \rceil}^{x-1} m(z^{\ast}_{\lceil (n-2)/k \rceil}) + \sum_{i=x}^{k} m(j_i) \\ & \leq \sum_{i=1}^{n-2+k-k\lceil (n-2)/k \rceil} m(z^{\ast}_{\lceil (n-2)/k \rceil + 1} ) + \sum_{i=n-1+k-k\lceil (n-2)/k \rceil}^{x-1} m(z^{\ast}_{\lceil (n-2)/k \rceil}) + \sum_{i=x}^{k} m(j_x) \\ 
& < \sum_{i=1}^{n-2+k-k\lceil (n-2)/k \rceil} m(z^{\ast}_{\lceil (n-2)/k \rceil+ 1} ) + \sum_{i=n-1+k-k\lceil (n-2)/k \rceil}^{x-1} m(z^{\ast}_{\lceil (n-2)/k \rceil}) + \sum_{i=x}^{k} m(z^{\ast}_{\lceil (n-2)/k \rceil}) \\ 
&= \sum_{i=1}^{n-2+k-k\lceil (n-2) / k \rceil} m(z^{\ast}_{\lceil (n-2)/ k \rceil + 1} ) + \sum_{i=n-1+k-k\lceil (n-2) / k \rceil}^{k} m(z^{\ast}_{\lceil (n-2)/k \rceil}) \\
& = m(z_n^{\ast}) = (n-1)(k-1)+1,
\end{align*}
contradicting the requirement that $m(z_n) = (n-1)(k-1)+1$.
\hfill \break

\noindent
\textit{Case (ii): $x \leq n - 2 + k - k \lceil \frac{n-2}{k} \rceil$}. In this case, $j_1 = j_2 = \ldots = j_{x-1} = z^{\ast}_{\lceil (n-2)/k \rceil + 1}$. Because $j_x$ differs from $z^{\ast}_{\lceil (n-2)/k \rceil + 1}$ and satisfies $f(j_x) < f(z^{\ast}_{\lceil (n-2)/k \rceil + 1})$, by~\cref{prop:strict_min_increasing} it follows that $m(j_x) < m(z^{\ast}_{\lceil (n-2)/ k \rceil + 1})$.
Therefore, 
\begin{align*}
m(z_n) &= \sum_{i=1}^{k} m(j_i) \\
& = \sum_{i=1}^{x-1} m(z^{\ast}_{\lceil (n-2)/k \rceil + 1}) + \sum_{i=x}^{k} m(j_i) \\
& \leq \sum_{i=1}^{x-1} m(z^{\ast}_{\lceil (n-2)/k \rceil + 1}) + \sum_{i=x}^{k} m(j_x) \\ 
& \leq \sum_{i=1}^{x-1} m(z^{\ast}_{\lceil (n-2)/k \rceil + 1}) + \sum_{i=x}^{k} m(z^{\ast}_{\lceil (n-2)/k \rceil}) \\ 
&< \sum_{i=1}^{x-1} m(z^{\ast}_{\lceil (n-2)/k \rceil + 1}) + \sum_{i=x}^{n-2+k-k\lceil (n-2)/k \rceil} m(z^{\ast}_{\lceil (n-2)/k \rceil + 1}) + \sum_{i=n-1+k-k\lceil (n-2)/k \rceil}^{k} m(z^{\ast}_{\lceil (n-2)/k \rceil}) \\ 
&= \sum_{i=1}^{n-2+k-k\lceil (n-2)/k \rceil} m(z^{\ast}_{\lceil (n-2)/k \rceil + 1}) + \sum_{i=n-1+k-k\lceil (n-2)/k \rceil}^{k} m(z^{\ast}_{\lceil (n-2)/k \rceil}) \\ 
&= m(z_n^{\ast}) = (n-1)(k-1)+1.
\end{align*}
We have reached a contradiction of the requirement that $\sum_{i=1}^{k} m(j_i) = (n-1)(k-1) + 1$. 

With contradictions in both cases, we conclude that $K(z_n) \geq_D K(z_n^{\ast})$, so that $z_n^{\ast}$ is the strictly $k$-furcating tree of minimal rank.
\end{proof}

\section{Proof of~\cref{lemma:beta_decreasing}}
\label{sec:proof_beta_decreasing}

In this appendix, we prove that the value of $\beta_k$ defined in~\cref{subsec:strictly_k_asymptotics}, representing the base of a growth constant associated with the maximal rank for strictly $k$-furcating trees, strictly decreases with $k$ for $k \geq 2$. 

We make use of several logarithmic inequalities, stated as lemmas.
\begin{lemma} 
\label{lemma: log-ineq-1} 
For integers $k \geq 2$, 
\begin{equation}
\frac{\log [(k+1)!]}{k^4} - \frac{\log[(k+2)!]}{(k+1)^4} > \frac{\log(k!)}{k^3 (k-1)} - \frac{\log[(k+1)!]}{(k+1)^3 k}.
\label{eq:E1}
\end{equation}
\end{lemma}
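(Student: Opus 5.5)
The plan is to reduce \cref{eq:E1} to the monotonicity of a single auxiliary sequence, and then prove that monotonicity with elementary bounds on $\log k!$. Write $L(j)=\log(j!)$, and set $g(k)=L(k+1)/k^4$ and $h(k)=L(k)/[k^3(k-1)]$. Then \cref{eq:E1} reads $g(k)-g(k+1) > h(k)-h(k+1)$, which is the same as $\varphi(k) > \varphi(k+1)$ for $\varphi=g-h$.

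A naive asymptotic check shows this reformulation is needed. Both sides of \cref{eq:E1} behave like $3\log k/k^4$, so their leading terms cancel. Comparing the sides term by term would therefore require second-order Stirling estimates, which we avoid.

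Using $L(k+1)=L(k)+\log(k+1)$, I simplify
\begin{equation*}
\varphi(k)=\frac{N(k)}{k^4(k-1)}, \qquad N(k)=(k-1)\log(k+1)-L(k).
\end{equation*}
So the goal becomes $N(k+1)\,k^4(k-1) < N(k)\,(k+1)^4 k$ for all $k\geq 2$.

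The key steps, in order, are as follows.

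\begin{enumerate}[label=(\arabic*)]
\item Bound the increment of $N$. A direct computation gives
\begin{equation*}
N(k+1)-N(k)=k\log\frac{k+2}{k+1}\leq \frac{k}{k+1}<1,
\end{equation*}
so $N(k+1)<N(k)+1$.
\item Reduce to a lower bound on $N$. Given step (1), it suffices to show $(N(k)+1)k^3(k-1)\leq N(k)(k+1)^4$. This is equivalent to
\begin{equation*}
N(k)\big[(k+1)^4-k^4+k^3\big]\geq k^3(k-1).
\end{equation*}
Since $(k+1)^4-k^4+k^3>5k^3$, it is enough to have $N(k)\geq (k-1)/5$.
\item Prove $N(k)\geq (k-1)/5$. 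Use the integral bound $L(k)\leq (k+1)\log(k+1)-k$, which follows from $\sum_{j=1}^{k}\log j\leq\int_1^{k+1}\log t\,dt$. This gives $N(k)\geq k-2\log(k+1)$, and $k-2\log(k+1)>(k-1)/5$ holds for all $k\geq 5$ because the left side minus the right side is increasing and already positive at $k=5$ (about $1.42$ versus $0.8$).
\item Check the small cases directly. For $k=2,3,4$ we have $N(2)=\log(3/2)\approx 0.405>0.2$, $N(3)=\log(16/6)\approx 0.98>0.4$, and $N(4)=\log(125/24)\approx 1.65>0.6$.
\end{enumerate}

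The main obstacle is recognizing that a direct asymptotic comparison of the two sides fails because of the leading-order cancellation. The telescoping reformulation through $\varphi=g-h$ removes that cancellation. After it, the only delicate point is making sure the crude bound in step (3) covers the small $k$, and step (4) handles these by direct evaluation.
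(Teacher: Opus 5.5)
Your proof is correct, and it is organized differently from the paper's. The paper clears denominators directly and obtains
\[
(-5k^3-6k^2-4k-1)\log(k!) + (k^4+5k^3+6k^2+4k+1)(k-1)\log(k+1) > k^4(k-1)\log(k+2).
\]
It then substitutes the Stirling-type bound $k!\le ek(k/e)^k$ and Napier's inequality $\log(k+2)\le\log(k+1)+1/k$. The resulting expression $H(k)$ is controlled by crude polynomial estimates that reduce to $3k\ge 14\log k$. Those estimates work only for $k\ge 12$, so the cases $k=2,\dots,11$ must be checked numerically.

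Your inequality $N(k+1)\,k^3(k-1) < N(k)\,(k+1)^4$ is algebraically identical to that rearranged form; the coefficient $5k^3+6k^2+4k+1$ is exactly your $(k+1)^4-k^4+k^3$. Your two analytic inputs also play the same roles as the paper's. Your increment bound $k\log\frac{k+2}{k+1}<1$, multiplied by $k^3(k-1)$, is precisely the paper's Napier step. Your integral bound $\log(k!)\le (k+1)\log(k+1)-k$ replaces the Stirling bound.

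What your organization gains is that working with $\varphi=g-h$ and $N(k)=(k-1)\log(k+1)-\log(k!)$ makes the cancellation of the $k^5\log k$ and $k^4\log k$ terms automatic. The paper has to engineer that cancellation by hand. The whole question then collapses to the single estimate $N(k)\ge (k-1)/5$. Your integral bound proves it analytically for every $k\ge 4$, not just $k\ge 5$, since $4-2\log 5\approx 0.78>0.6$. Only $k=2,3$ genuinely need direct evaluation, instead of ten numerical cases.
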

\begin{proof}
Rearranging the statement, we must prove
\begin{align*}
& \log[(k+1)!] \left[\frac{1}{k^4} + \frac{1}{(k+1)^3 k}\right] > \frac{\log(k!)}{k^3(k-1)} + \frac{\log[(k+2)!]}{(k+1)^4},
\end{align*}
or equivalently,
\begin{align*}
(-5k^3-6k^2-4k-1)\log(k!) + (k^4 + 5k^3 + 6k^2 + 4k + 1)(k-1) \log(k+1) > k^4(k-1) \log(k+2).
\end{align*}
We use the upper bound $k! \leq ek(k/e)^k$, which holds for all $k \geq 1$~\cite[p.~90, result (q)]{inequality-dictionary}. Using this upper bound, for $k \geq 12$, we prove the stronger inequality 
\begin{equation*}
(-5k^3-6k^2-4k-1) \log [ek(k/e)^k] + (k^4 + 5k^3 + 6k^2 + 4k + 1)(k-1) \log(k+1) > k^4(k-1) \log(k+2),
\end{equation*}
or equivalently,
\begin{equation*}
(-5k^3-6k^2-4k-1)[-k+1+(k+1) \log k ] + (k^4 + 5k^3 + 6k^2 + 4k + 1)(k-1) \log(k+1) > k^4(k-1) \log(k+2).
\end{equation*}
The cases of $k=2, 3, 4, 5, 6, 7, 8, 9, 10$ and 11 can all be verified individually in~\cref{eq:E1}. 

For $k \geq 12$, it suffices to show that $H(k)>0$, where
\begin{align*}
H(k) & = (5k^4+k^3-2k^2-3k-1) + (-5k^4-11k^3-10k^2-5k-1) \log k \\
    & \quad + (k^5+4k^4+k^3-2k^2-3k-1) \log(k+1) + (-k^5+k^4) \log(k+2).
\end{align*}
Informally, as $H(k)$ grows large, the terms of order $\Theta(k^5 \log k)$ and $\Theta(k^4 \log k)$ cancel, so that $H(k)$ is positive for $k$ sufficiently large that the $5k^4$ term dominates. To make this argument rigorous, we use Napier's inequality~\cite[p.~220]{inequality-dictionary}, by which $\log(k+2) \leq \log(k+1) + \frac{1}{k+1} \leq \log(k+1) + \frac{1}{k}$ for all $k \geq 1$.

With Napier's inequality, we see that $H(k)$ satisfies
\begin{align}
H(k) & \geq (5k^4+k^3-2k^2-3k-1) + (-5k^4-11k^3-10k^2-5k-1)\log k \nonumber \\
     & \quad + (k^5+4k^4+k^3-2k^2-3k-1) \log(k+1) + (-k^5+k^4) \Big[\log(k+1) + \frac{1}{k} \Big] \nonumber \\
     & = (4k^4+2k^3-2k^2-3k-1) + (-5k^4-11k^3-10k^2-5k-1)\log k + (5k^4+k^3-2k^2-3k-1) \log(k+1) \nonumber \\
& \geq (4k^4+2k^3-2k^2-3k-1) + (-11k^3-10k^2-5k-1)\log k + (k^3-2k^2-3k-1) \log(k+1),
\label{eq:mess}
\end{align}
where the last step uses the fact that for all $k \geq 1$, $5k^4 \log(k+1) - 5k^4 \log(k) \geq 0$. 

For $k \geq 1$, the right-hand-side of \cref{eq:mess} is greater than or equal to $(4k^4 - 6k^2) + (-11k^3 - 16k^2) \log k  + (k^3 - 6k^2) \log k$, so that it suffices to show that this quantity is positive. We have 
\begin{align}
(4k^4 - 6k^2) + (-11k^3 - 16k^2) \log k  + (k^3 - 6k^2) \log k 
     & \geq k^2[(4k^2-6) + (-10k - 22) \log k]  \nonumber \\
     & \geq k^2(3k^2 -14k \log k) \nonumber \\
     & = k^3(3k - 14 \log k). \label{eq:314}
\end{align}
In this chain of inequalities, we have used that $4k^2-6 > 3k^2$ for $k \geq 3$ and $-22 \geq -4k$ for $k \geq 6$. But $3k \geq 14 \log k$ for all $k \geq 12$, a fact that can be proved by noting that for $f(x)=3x-14 \log x$, $f(12) > 0$ and $f'(x) > 0$ for all $x \geq 5$. Hence, the expression in \cref{eq:314} is positive, so that $H(k) > 0$ for $k \geq 12$. 
\end{proof}

\begin{lemma} 
\label{lemma: log-ineq-2}
Suppose $k \geq 2$ and $i \geq 2$ are integers, and $x > 0$. Then
\begin{align*}
\sum_{j=1}^{k} \log(x+j) &> \frac{ (k+1)^{-(i+1)} \log(x + k + 1) + [k^{-i} - (k+1)^{-i} ] \log x}{k^{-(i+1)} - (k+1)^{-(i+1)}} .
\end{align*}
\end{lemma}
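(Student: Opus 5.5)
The plan is to reduce the inequality to a comparison of weights after subtracting $k\log x$ from both sides. Write $D = k^{-(i+1)} - (k+1)^{-(i+1)}$, which is positive. Then set
\[
w_1 = \frac{(k+1)^{-(i+1)}}{D}, \qquad w_2 = \frac{k^{-i} - (k+1)^{-i}}{D},
\]
so the right-hand side is $w_1 \log(x+k+1) + w_2 \log x$.

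The first step is the observation $w_1 + w_2 = k$. Indeed, the numerator of $w_1 + w_2$ is
\[
k^{-i} - (k+1)^{-i} + (k+1)^{-(i+1)} = k^{-i} - k(k+1)^{-(i+1)} = kD .
\]
Subtracting $k \log x$ from both sides therefore turns the claim into the equivalent statement
\[
\sum_{j=1}^{k} \log\Big(1 + \frac{j}{x}\Big) > w_1 \log\Big(1 + \frac{k+1}{x}\Big).
\]

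The second step bounds $w_1$ from above. We have
\[
w_1 = \frac{1}{\big(\frac{k+1}{k}\big)^{i+1} - 1}.
\]
Since $i \geq 2$, the binomial expansion gives $\big(1 + \tfrac1k\big)^{i+1} - 1 \geq \big(1 + \tfrac1k\big)^{3} - 1 > 3/k$. Hence $w_1 < k/3$.

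The third step bounds the left side from below using concavity. For fixed $x > 0$, the function $g(t) = \log(1 + t/x)$ is strictly concave on $[0, k+1]$ with $g(0) = 0$. It therefore lies strictly above its chord on the interior, so
\[
g(j) > \frac{j}{k+1}\, g(k+1) \quad \text{for } 1 \leq j \leq k .
\]
Summing over $j$ gives
\[
\sum_{j=1}^{k} g(j) > \frac{k(k+1)/2}{k+1}\, g(k+1) = \frac{k}{2}\, g(k+1).
\]
Since $g(k+1) > 0$ and $k/2 > k/3 > w_1$, the required strict inequality follows.

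The only delicate point is the identity $w_1 + w_2 = k$, which is what allows the $\log x$ terms to cancel exactly. That is a short algebraic check, and the remaining estimates are elementary. No earlier result of the paper is needed.
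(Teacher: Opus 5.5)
Your proof is correct, and it takes a genuinely different and much shorter route than the paper's. The paper first bounds the sum below by $\int_x^{x+k}\log t\,dt$. It then defines $F(x,k,i)$ as the difference between this integral and the right-hand side, and shows $\partial F/\partial i>0$ to reduce to $i=2$. Finally it proves $F(x,k,2)>0$ by showing that $F(x,k,2)$ decreases to $0$ in $x$, which requires computing $\partial^2F/\partial x^2$ explicitly and evaluating two limits.

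You instead isolate the identity $w_1+w_2=k$, which the paper never states; it appears only implicitly in its limit computation. With that identity the $\log x$ terms cancel exactly, and the claim becomes $\sum_{j=1}^{k}\log(1+j/x)>w_1\log\bigl(1+(k+1)/x\bigr)$. Your uniform bound $w_1<k/3$ then replaces the monotonicity-in-$i$ step. The chord inequality for the concave function $t\mapsto\log(1+t/x)$ replaces both the integral comparison and the second-derivative analysis.

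Your reduction also shows where the slack lies. The chord bound is asymptotically tight as $x\to\infty$, so the inequality holds for all $x>0$ exactly when $w_1\leq k/2$. That condition already holds for $i\geq 1$, so the hypothesis $i\geq 2$ is stronger than needed. The paper's calculus argument gives no such insight, and it loses a little when passing from the sum to the integral. Its approach is more mechanical and does not require spotting the cancellation, but it is longer and its key computations are harder to check.
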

\begin{proof}
We apply an integral lower-bound on the sum of logarithms to simplify the left-hand side. Because $\log x $ is monotonically increasing, for $x > 0$, $\log (x+1) \geq \int_{x}^{x+1} \log t \ dt$. Then
\begin{align*}
\sum_{j=1}^{k} \log(x+j) &\geq \int_{x}^{x + k} \log t\ dt = \left[t \log t - t \right] \Big|^{x+k}_{x} \\ 
&= (x+k) \log(x+k) - x\log x - k.
\end{align*}
It then suffices to demonstrate that for integers $k, i \geq 2$ and $x>0$, 
\begin{align*}
& (x+k) \log(x+k) - x\log x - k > \frac{(k+1)^{-(i+1)} \log(x + k + 1) + \big[k^{-i} - (k+1)^{-i} \big] \log x}{{k^{-(i+1)} - (k+1)^{-(i+1)}}}.
\end{align*}

Writing $F(x, k, i) = [(x+k) \log(x+k) - x\log x - k] - \big[(k+1)^{-(i+1)} \log(x + k + 1) + \big[k^{-i} - (k+1)^{-i}\big] \log x \big] /  [k^{-(i+1)} - (k+1)^{-(i+1)}]$, we must show that $F(x,k,i) > 0$. We complete the proof in two parts: (i) $\frac{\partial}{\partial i} F(x,k,i) \geq 0$, (ii) $F(x,k,2) > 0$. 

For (i), 
\begin{align*}
\frac{\partial F}{\partial i}(x,k,i) = \frac{k^{i+1} (k+1)^{i+1} [\log k - \log(k+1)] \, [\log x - \log(x+k+1)]}{[(k+1)^{i+1} - k^{i+1}]^2}. 
\end{align*}
Because $\log k - \log(k+1) < 0$ and $\log x - \log(x+k+1) < 0$, it follows that $\frac{\partial}{\partial i} F(x,k,i) > 0$ for all $i \geq 2$. 

For (ii), to show that $F(x,k,2) \geq 0$ for $x > 0$, we show that $\frac{\partial}{\partial x} F(x,k,2) < 0$ and $\lim_{x \to \infty} F(x,k,2) = 0$, so that as $x$ increases, $F$ monotonically decreases to 0. First,
\begin{align}
\frac{\partial F(x,k,2)}{\partial x} 
&= \log\left(1 + \frac{k}{x} \right) - \frac{1}{k^{-3} - (k+1)^{-3}} \left[\frac{(k+1)^{-3}}{x+k+1} + \frac{k^{-2} - (k+1)^{-2}}{x} \right].
\label{eq:partialx}
\end{align}
To prove that $\frac{\partial}{\partial x} F(x,k,2) < 0$ for all $x > 0$, we employ the same strategy and show that $\frac{\partial^2}{\partial x^2} F(x,k,2) > 0$ for all $x > 0$ and $\lim_{x \to \infty} \frac{\partial}{\partial x}F(x,k,2) = 0$. That $\lim_{x \to \infty} \frac{\partial}{\partial x}F(x,k,2) = 0$ follows quickly from \cref{eq:partialx}.

The second partial derivative of $F$ is
\begin{align*}
\frac{\partial^2 F(x,k,2)}{\partial x^2} = \frac{k^2[2k^4+k^3(3x+7)+k^2(x^2+8x+9)+k(x^2+7x+5)+(x+1)^2]}{x^2(3k^2+3k+1)(x+k)(x+k+1)^2},
\end{align*}
a positive quantity, as $k,x >0$ and all terms in the fraction are positive. 
Hence, $\frac{\partial}{\partial x} F(x,k,2) < 0$ for $x >0$. 

The last step is to show $\lim_{x \to \infty} F(x,k,2)=0$. We rewrite $F(x,k,2)$:
\begin{align*}
F(x,k,2) = \frac{k}{3k^2+3k+1} \Bigg[k^2 \log \bigg[ \frac{(x+k)^3}{x^2(x+k+1)} \bigg] + (3k+1) \log \bigg( 1 + \frac{k}{x} \bigg) \Bigg] + x \log\bigg(1+\frac{k}{x} \bigg) - k.
\end{align*}
The first term has limit 0, and the second has limit $k$. The limit of the sum of the three terms is 0.
\end{proof}

\begin{lemma} 
\label{lemma: log-ineq-3}
For integers $k \geq 2$ and real numbers $x, y > 0$ with $x > y$,
\begin{align*}
\log \left[\frac{\prod_{j=1}^{k} (x+j)}{x^k} \right] &> \log \left[\frac{\prod_{j=1}^{k} (y+j)}{y^k} \right].
\end{align*}
\end{lemma}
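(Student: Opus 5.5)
The plan is to reduce the statement to a termwise comparison of elementary one-variable functions. First I rewrite both sides using $\log\bigl[\prod_{j=1}^{k}(x+j)/x^k\bigr]=\sum_{j=1}^{k}\log\bigl(1+\tfrac{j}{x}\bigr)$, and likewise for $y$. The claim then becomes
\begin{equation*}
\sum_{j=1}^{k}\log\Bigl(1+\frac{j}{x}\Bigr) > \sum_{j=1}^{k}\log\Bigl(1+\frac{j}{y}\Bigr), \qquad x>y>0 .
\end{equation*}
This holds as soon as it holds for each $j$ separately. So I would set $g_j(t)=\log(1+j/t)$ for $t>0$ and compare $g_j(x)$ with $g_j(y)$ using monotonicity of $g_j$.

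The monotonicity of $g_j$ is the step that decides everything, and I expect it to be the main obstacle, because it fixes the \emph{direction} of the inequality. Differentiating gives
\begin{equation*}
g_j'(t)=\frac{-j/t^2}{1+j/t}=-\frac{j}{t(t+j)},
\end{equation*}
which is strictly negative on $t>0$. So each $g_j$ is strictly decreasing, and termwise comparison yields $g_j(x)<g_j(y)$ for $x>y$. Summed over $j$, this is the \emph{opposite} of the displayed inequality.

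Before committing further I would check a small case, and it confirms the conflict. With $k=2$, $x=2$, $y=1$, the left side is $\log(12/4)=\log 3$ and the right side is $\log(6/1)=\log 6$. Hence the statement cannot be proved as printed: the termwise argument proves the reversed inequality, and no other route can prove a false statement.

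What I would do is carry out the argument above in full, present the counterexample so the conflict is explicit, and then check how the lemma is used in Appendix~\ref{sec:proof_beta_decreasing}. There it will presumably feed into the monotonicity of $\rho_n=\log[1+P_k(d_n)/d_n^k]$, which was shown in the proof of \cref{thm:asymptotic_b_n} to be strictly decreasing. That usage requires exactly the decreasing direction, consistent with the computation above. So I expect the intended statement to have ``$<$'' (equivalently, hypothesis $y>x$), with everything else in the plan unchanged.
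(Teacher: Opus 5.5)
Your diagnosis is correct: the printed inequality is false (your check $\log 3<\log 6$ for $k=2$, $x=2$, $y=1$ is right), and the paper's own proof actually establishes the reversed statement. It uses the same monotonicity argument you give, except that it differentiates $f(x)=x^{-k}\prod_{j=1}^{k}(x+j)$ as a product, getting $f'(x)=x^{-k-1}\prod_{j=1}^{k}(x+j)\bigl[\sum_{j=1}^{k}\tfrac{x}{x+j}-k\bigr]<0$, rather than differentiating $\log(1+j/t)$ termwise; so the direction in the lemma's statement is a typo. The only slip is your guess about where it is used: it enters step (b) of the proof of \cref{lemma:beta_decreasing}, to deduce $G(d_i,k+1)>G(d_i',k+1)$ from $d_i<d_i'$, and that step again needs exactly the decreasing direction you proved.
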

\begin{proof}
It suffices to show that $f(x) = x^{-k} \prod_{j=1}^{k} (x+j)$ decreases as $x$ increases. The derivative satisfies: 
\begin{align*}
f'(x) 
&= \frac{x^{k-1} \left[ \prod_{j=1}^{k} (x+j) \right] \left[ \left( \sum_{j=1}^{k} \frac{x}{x+j} \right) - k\right]}{x^{2k}} \\
& < \frac{x^{k-1} \left[ \prod_{j=1}^{k} (x+j) \right] \left[ \left( \sum_{j=1}^{k} 1 \right) - k\right]}{x^{2k}} = 0. 
\end{align*}
\end{proof}

\medskip
\noindent {\bf Proof of~\cref{lemma:beta_decreasing}.}
For $k \geq 2$, $\beta_k = \exp(z_3k^{-3}) \, \exp (\sum_{i=3}^{\infty} k^{-(i+1)} \rho_i )$. In this expression, $z_3$ depends on $k$, $z_3 = y_3 - \log(k!) / (k-1) = - \log(k!)/(k-1)$ because $y_3=0$; $\rho_i = \log \big[1 + P_k(d_i) / d_i^{k} \big]$, where $P_k(x)$ is the polynomial $(x+k)(x+k-1)\cdots (x+1) - x^k$; and $d_i = {d_{i-1} + k \choose k}$ for $i \geq 3$, with $d_3=1$. 

Hence, we must show 
\begin{align}
& \exp\bigg(-\frac{\log[(k+1)!]}{(k+1)^3 k}\bigg) \exp\bigg[\sum_{i=3}^{\infty} (k+1)^{-(i+1)} \log \bigg(1 + \frac{P_{k+1}(d_i')}{d_i'^{k+1}} \bigg) \bigg] \nonumber \\
& < \exp\bigg(-\frac{\log(k!)}{k^3(k-1)}\bigg) \exp \bigg[\sum_{i=3}^{\infty} k^{-(i+1)} \log \left(1 + \frac{P_{k}(d_i)}{d_i^k} \right) \bigg],
\label{eq:BkBkplus1}
\end{align} 
where we use $d_i'$ to denote the sequence $\{d_n\}_{n=3}^{\infty}$ generated with $(k+1)$-furcation and $d_i$ denotes the sequence $\{d_n\}_{n=3}^{\infty}$ obtained with $k$-furcation. 

Taking logarithms of both sides of \cref{eq:BkBkplus1} and rearranging, we must show
\begin{align}
\sum_{i=3}^{\infty} \bigg[k^{-(i+1)} \log \bigg(1 + \frac{P_{k}(d_i)}{d_i^k} \bigg)- (k+1)^{-(i+1)} \log \bigg(1 + \frac{P_{k+1}(d_i')}{d_i'^{k+1}} \bigg) \bigg] &> \frac{\log(k!)}{k^3 (k-1)} - \frac{\log[(k+1)!]}{(k+1)^3 k}.
\label{eq:appendixE}
\end{align}
It suffices to show (i) the $i=3$ term exceeds the right-hand side, and (ii) all terms with $i \geq 4$ are positive.

(i) If $i=3$, then $d_3 = d_3' = 1$, $P_k(d_3) = P_k(1) = (1+k) \big(1+(k-1)\big) \cdots (1+1) - 1^k = (k+1)! - 1$, and $P_{k+1}(1) = \big(1+(k+1)\big) (1+k) \cdots (1 + 1) - 1^{k+1} = (k+2)! - 1$. We claim that the $i=3$ term of \cref{eq:appendixE} is larger than the right-hand side, or 
\begin{align*}
k^{-4} \log \left[1 + \frac{(k+1)!-1}{1^k} \right] - (k+1)^{-4} \log \left[1 + \frac{(k+2)! - 1}{1^{k+1}} \right] &> \frac{\log(k!)}{k^3(k-1)} - \frac{\log [(k+1)!]}{(k+1)^3 k} 
\end{align*}
This inequality is true by Lemma~\ref{lemma: log-ineq-1}.

(ii) It remains to show that for $k$ fixed, for each $i \geq 4$,
\begin{equation*}
k^{-(i+1)} \log \bigg[1 + \frac{P_{k}(d_i)}{d_i^k} \bigg]- (k+1)^{-(i+1)} \log \bigg[1 + \frac{P_{k+1}(d_i')}{d_i'^{k+1}} \bigg] > 0. 
\end{equation*} 
This inequality can be rearranged as follows: 
\begin{align} 
\bigg( \frac{k}{k+1} \bigg)^{-(i+1)} & > \frac{\log \Big[1 + \frac{P_{k+1}(d_i')}{d_i'^{k+1}} \Big]}{\log \Big[1 + \frac{P_{k}(d_i)}{d_i^{k}} \Big]} 
= \frac{\log \Big[\prod_{j=1}^{k+1} (d_i' + j) \Big] - \log(d_i'^{k+1})}{\log \left[\prod_{j=1}^{k} (d_i + j) \right] - \log(d_i^{k})}, \nonumber \\
k^{-(i+1)} \bigg[ \bigg( \sum_{j=1}^{k} \log (d_i + j) \bigg) - \log(d_i^{k}) \bigg] & > (k+1)^{-(i+1)} \bigg[ \bigg( \sum_{j=1}^{k+1} \log (d_i' + j) \bigg) - \log(d_i'^{k+1}) \bigg].
\label{eq:G}
\end{align}

Define the bivariate function $G(x,k) = k^{-(i+1)} \big[\big(\sum_{j=1}^{k} \log(x+j) \big) - \log(x^k) \big]$ for $x > 0$, $k \geq 2$, and fixed $i \geq 4$. We claim (a) $G(x,k) > G(x,k+1)$, (b) $G(x,k) > G(y,k)$ if $x < y$, and (c) $d_i < d_i'$ for $i \geq 4$. We then have $G(d_i, k) > G(d_i, k+1)$ by (a); by (c), (b) applies, so that $G(d_i, k+1) > G(d_i', k+1)$, from which $G(d_i, k) > G(d_i', k+1)$, proving the inequality in \cref{eq:G}. It remains to show (a), (b), and (c).

(a) We show $G(x,k) > G(x,k+1)$ for $x > 0$ and integers $k \geq 2$. The desired inequality is equivalent to each of the following inequalities, the last of which holds by Lemma~\ref{lemma: log-ineq-2}.
\begin{align*}
k^{-(i+1)} \bigg[\bigg(\sum_{j=1}^{k} \log(x+j) \bigg) - k \log x \bigg] &> (k+1)^{-(i+1)} \bigg[ \bigg(\sum_{j=1}^{k+1} \log(x+j)  \bigg) - (k+1) \log x \bigg] \\ 
[k^{-(i+1)} - (k+1)^{-(i+1)}] \sum_{j=1}^{k} \log(x+j) &> (k+1)^{-(i+1)} \log(x+k+1) + [k^{-i} - (k+1)^{-i}] \log x \\ 
\sum_{j=1}^{k} \log(x+j) &> \frac{ (k+1)^{-(i+1)} \log(x + k + 1) + [k^{-i} - (k+1)^{-i}] \log x }{k^{-(i+1)} - (k+1)^{-(i+1)}} .
\end{align*}

(b) For $x > y > 0$, the desired inequality is equivalent to each of the following inequalities, the second of which holds by Lemma~\ref{lemma: log-ineq-3}.
\begin{align*}
k^{-(i+1)} \bigg[ \bigg( \sum_{j=1}^{k} \log(x+j) \bigg) - \log(x^k) \bigg] &> k^{-(i+1)} \bigg[ \bigg( \sum_{j=1}^{k} \log(y+j) \bigg) - \log(y^k) \bigg] \\ 
\log \bigg[\frac{\prod_{j=1}^{k} (x+j)}{x^k} \bigg] &> \log \bigg[\frac{\prod_{j=1}^{k} (y+j)}{y^k} \bigg].
\end{align*}

(c) To show that $d_i' > d_i$ for each $i \geq 4$, we proceed by induction on $i$. We have $d_3'=d_3=1$. For the base case of $i=4$, $d_{4}' = \binom{1 + (k+1)}{k+1} = k+2 > k + 1 = \binom{1 + k}{k} = d_4$. 

Assuming that $d_i' > d_i$ for each $i$ with $4 \leq i \leq n-1$, it follows that
\begin{align*}
d_n' = \binom{d_{n-1}' + k + 1}{k + 1} &= 
\frac{d_{n-1}' + k + 1}{k + 1} \cdot \frac{(d_{n-1}' + k)(d_{n-1}'+k-1) \cdots (d_{n-1}' + 1)}{k!} \\ 
&> \frac{d_{n-1}' + k + 1}{k + 1} \cdot \frac{(d_{n-1} + k)(d_{n-1}+k-1) \cdots (d_{n-1} + 1)}{k!} \\
&=  \frac{d_{n-1}' + k + 1}{k + 1} \cdot \binom{d_{n-1} + k}{k} > \binom{d_{n-1} + k}{k} = d_n.
\end{align*}
$\square$

\section{Proof of~\cref{thm:at_most_min_equals_ast}}
\label{pf:at_most_min_equals_ast}

\begin{proof}
We induct on $n$. The base case of $n=1$ is trivial because only one tree has with one leaf: $y_1 = y_1^{\ast}$. 

For the inductive hypothesis, suppose that $y_\ell = y_\ell^{\ast}$ for each $\ell$ with $1 \leq \ell \leq n-1$. For the inductive step, let $j_1, j_2, \ldots, j_k$ be the $k$ subtrees of $y_n$ in canonical order. Because $y_n$ is an at-most-$k$-furcating tree, some of these subtrees could have no leaves. 

We first show that $j_i$ is the minimal-rank tree for its number of leaves, or $j_i = y^{\ast}_{m(j_i)}$ for all $i$, $1 \leq i \leq k$. Suppose for contradiction that $j_i$ and $y^{\ast}_{m(j_i)}$ are different trees for some $i$, $1 \leq i \leq k$. Therefore, the tree $t_i'$ defined by replacing subtree $j_i$ by $y^{\ast}_{m(j_i)}$ is distinct from tree $y_n$. Moreover, because $m(j_i) < n$, the inductive hypothesis yields that $y^{\ast}_{m(j_i)} = y_{m(j_i)}$. Hence, $f(y^{\ast}_{m(j_i)})=f(y_{m(j_i)}) < f(j_i)$ because trees $y_{m(j_i)}$ and $j_i$ are distinct and have the same number of leaves. $K(t_i')$ is the $k$-tuple formed by replacing the $i$th entry of $K(y_n)$ by $f(y^{\ast}_{m(j_i)})$ and sorting entries in decreasing order. By Lemma~\ref{lemma: replacement-lemma}, $K(t_i') <_D K(y_n)$, so $f(t_i') < f(y_n)$. We have reached a contradiction of the rank-minimality of $y_n$. We can therefore conclude that $j_i=y^{\ast}_{m(j_i)}$ for all $i$, $1 \leq i \leq k$, and thus, $$K(y_n) = \Big(f(j_1), f(j_2), \ldots, f(j_k)\Big) = \Big(f(y^{\ast}_{m(j_1)}), f(y^{\ast}_{m(j_2)}), \ldots, f(y^{\ast}_{m(j_k)})\Big).$$

Note that $f(y^{\ast}_{m(j_1)}) \geq f(y^{\ast}_{m(j_2)}) \geq \ldots \geq f(y^{\ast}_{m(j_k)})$ because $j_1, j_2, \ldots, j_k$ are in canonical order, so we must have that $m(j_1) \geq m(j_2) \geq \ldots \geq m(j_k)$ by~\cref{prop:up_to_k_minimal_increasing}. Assume for the sake of contradiction that $y_n \neq y_{n}^{\ast}$ and let $x$ be the smallest index where $K(y_n)$ differs from $K(y_n^{\ast})$. We have two cases for the value of $x$. \hfill \break

\noindent 
\textit{Case (i): $x \leq n - k \lceil \frac{n}{k} \rceil$}. Because $K(y_n) <_D K(y^{\ast}_n)$ and $x \leq n - k \lceil n/k \rceil$, we must have that $f(y^{\ast}_{m(j_x)}) < f(y^{\ast}_{\lceil n/k \rceil})$. By~\cref{prop:up_to_k_minimal_increasing}, it follows that $m(j_x) < \lceil n/k \rceil$. Therefore,
\begin{align*}
m(y_n) &= \sum_{i=1}^{k} m(j_i) \\
& = \sum_{i=1}^{x-1} m(j_i) + \sum_{i=x}^{k} m(j_i) \\
& = \sum_{i=1}^{x-1} \bigg \lceil \frac{n}{k} \bigg \rceil + \sum_{i=x}^{k} m(j_i) \\
& \leq \sum_{i=1}^{x-1} \bigg \lceil \frac{n}{k} \bigg \rceil + \sum_{i=x}^{n - k \lceil \frac{n}{k} \rceil} m(j_x) + \sum_{i=n - k \lceil \frac{n}{k} \rceil + 1}^{k} m(j_i) \\ 
&< \sum_{i=1}^{x-1} \bigg \lceil \frac{n}{k} \bigg \rceil + \sum_{i=x}^{n - k \lceil \frac{n}{k} \rceil} \bigg\lceil \frac{n}{k} \bigg\rceil + \sum_{i=n - k \lceil \frac{n}{k} \rceil + 1}^{k} m(j_i) \\
& \leq \sum_{i=1}^{n - k \lceil\frac{n}{k} \rceil} \bigg\lceil \frac{n}{k} \bigg\rceil + \sum_{i=n - k \lceil\frac{n}{k} \rceil + 1}^{k} m(j_x) \\
& \leq \sum_{i=1}^{n - k \lceil\frac{n}{k} \rceil} \bigg\lceil \frac{n}{k} \bigg\rceil + \sum_{i=n - k \lceil\frac{n}{k} \rceil + 1}^{k} \bigg\lfloor \frac{n}{k} \bigg\rfloor \\ 
& = m(y_n^{\ast}) = n.
\end{align*}
We have reached a contradiction of the fact that $m(y_n) = n$.
\hfill \break

\noindent
\textit{Case (ii): $x > n - k \lceil \frac{n}{k} \rceil$}. Because $K(y_n) <_D K(y^{\ast}_n)$ and $x > n - k \lceil n/k \rceil$, we must have that $f(y^{\ast}_{m(j_x)}) < f(y^{\ast}_{\lfloor n/k \rfloor})$. By~\cref{prop:up_to_k_minimal_increasing}, it follows that $m(j_x) < \lfloor n/k \rfloor$. Hence, 
\begin{align*}
m(y_n) &= \sum_{i=1}^{k} m(j_i) \\
&= \sum_{i=1}^{n - k \lceil \frac{n}{k} \rceil} m(j_i) + \sum_{i=n - k \lceil \frac{n}{k} \rceil + 1}^{x-1} m(j_i) + \sum_{i=x}^{k} m(j_i) \\
&= \sum_{i=1}^{n - k \lceil \frac{n}{k} \rceil} \bigg\lceil \frac{n}{k} \bigg\rceil + \sum_{i=n - k \lceil \frac{n}{k} \rceil + 1}^{x-1} \bigg\lfloor \frac{n}{k} \bigg\rfloor + \sum_{i=x}^{k} m(j_i) \\
&\leq \sum_{i=1}^{n - k \lceil \frac{n}{k} \rceil} \bigg\lceil \frac{n}{k} \bigg\rceil + \sum_{i=n - k \lceil \frac{n}{k} \rceil + 1}^{x-1} \bigg\lfloor \frac{n}{k} \bigg\rfloor + \sum_{i=x}^{k} m(j_x) \\
& < \sum_{i=1}^{n - k \lceil \frac{n}{k} \rceil} \bigg\lceil \frac{n}{k} \bigg\rceil + \sum_{i=n - k \lceil \frac{n}{k} \rceil + 1}^{x-1} \bigg\lfloor \frac{n}{k} \bigg\rfloor + \sum_{i=x}^{k} \bigg\lfloor \frac{n}{k} \bigg\rfloor \\ &= \sum_{i=1}^{n - k \lceil \frac{n}{k} \rceil} \bigg\lceil \frac{n}{k} \bigg\rceil +\sum_{i=n - k \lceil \frac{n}{k} \rceil+1}^{k} \bigg\lfloor \frac{n}{k} \bigg\rfloor \\
&= m(y_n^{\ast}) = n,
\end{align*}
a contradiction of the fact that $m(y_n) = n$.

Because both cases have produced contradictions, we conclude that $y_n = y_n^{\ast}$
\end{proof}

\section{Proof of Theorem~\ref{thm:gamma-beta-comparison}}
\label{pf:gamma-beta-related}

We prove Theorem~\ref{thm:gamma-beta-comparison}, stating that the growth constant $\gamma_k$ in the at-most-$k$-furcating case is greater than or equal to the corresponding constant $\beta_k$ in the strictly $k$-furcating case, with equality if and only if $k=2$.

First, if $k=2$, then \cref{eq:recurrence_bn} and \cref{eq:recurrence_Bn} are the same recurrence, meaning that $b_n = B_n$ for all $n \geq 1$. By~\cref{thm:asymptotic_b_n} and~\cref{thm:asymptotic_B_n}, the growth constants of these identical recurrences are equal: $\gamma_k = \beta_k$. 

Next, consider $k \geq 3$. By~\cref{thm:asymptotic_b_n} and~\cref{thm:asymptotic_B_n}, 
\begin{align} 
\label{eq:gamma-beta-fraction-growth}
\frac{B_n}{b_n} \sim \frac{(k!)^{\frac{1}{k-1}} \gamma_k^{(k^n)}}{(k!)^{\frac{1}{k-1}} \beta_k^{(k^n)}} = \Big(\frac{\gamma_k}{\beta_k} \Big)^{(k^n)}.
\end{align}
The limit $\lim_{n \rightarrow \infty} (\gamma_k/\beta_k)^{(k^n)}$ is infinite if $\gamma_k > \beta_k$, 1 if $\gamma_k = \beta_k$, and $0$ if $\gamma_k < \beta_k$. By \cref{eq:gamma-beta-fraction-growth}, the limiting ratio of $B_n/b_n$ with $(\gamma_k/\beta_k)^{(k^n)}$ is 1. If $B_n/b_n \geq c$ for all $n \geq n_0$ and some constant $c > 1$, then the limit of $(\gamma_k/\beta_k)^{(k^n)}$ also exceeds 1, and hence it is infinite. Therefore, to show that $\gamma_k > \beta_k$, it suffices to show that $B_n/b_n \geq c$ for all $n \geq n_0$ and some $c > 1$. The folllowing lemma proves this statement, with $(c,n_0)=(2,4)$.

\begin{lemma} 
\label{lemma:bn-Bn-comparison}
Consider $k \geq 3$. For a constant $c=2$, $cb_n < B_n < b_{n+1}$ for all $n \geq 4$.
\end{lemma}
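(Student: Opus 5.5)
We prove both inequalities simultaneously by induction on $n \geq 4$, using only the recurrences $b_n = 2 + \binom{b_{n-1}+k-2}{k}$ from \cref{eq:recurrence_bn} and $B_n = 2 - B_{n-1} + \binom{B_{n-1}+k-1}{k}$ from \cref{eq:recurrence_Bn}, with $b_1=B_1=1$ and $b_2=B_2=2$. Monotonicity of $x \mapsto \binom{x+c}{k}$ for integer $x \geq 0$ will carry the inductive step. The base case $n=4$ is a direct check. For $k \geq 3$ we have $b_3 = 3$, $b_4 = 2+\binom{k+1}{k} = k+3$, and $b_5 = 2+\binom{2k+1}{k}$. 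We also have $B_3 = \binom{k+1}{k} = k+1$ and $B_4 = 1-k+\binom{2k}{k}$. The required inequalities become $2(k+3) < 1-k+\binom{2k}{k} < 2+\binom{2k+1}{k}$. The right inequality is immediate since $\binom{2k}{k} < \binom{2k+1}{k}$. The left one holds because $\binom{2k}{k} \geq 20 > 3k+5$ at $k=3$, and $\binom{2k}{k}$ grows faster than $3k+5$; a one-line ratio argument gives this. Note that at $n=3$ the left inequality fails ($2\cdot 3 > k+1$ when $k=3,4$), which explains why $n_0=4$.

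For the upper bound $B_{n+1} < b_{n+2}$, I would write $B_{n+1} = 2 - B_n + \binom{B_n+k-1}{k}$. Using $B_n \leq b_{n+1}-1$ (the induction hypothesis) and the fact that $x \mapsto \binom{x+k-1}{k} - x$ is increasing for $x \geq 1$ and $k \geq 2$ (its increment is $\binom{x+k-1}{k-1} - 1 \geq 0$), we get $B_{n+1} \leq 2 - (b_{n+1}-1) + \binom{b_{n+1}+k-2}{k}$. This is strictly less than $2 + \binom{b_{n+1}+k-2}{k} = b_{n+2}$ because $b_{n+1} > 1$.

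For the lower bound $2b_{n+1} < B_{n+1}$, I would use $B_n > 2b_n$, that is, $B_n \geq 2b_n + 1$, together with the same monotonicity. This gives $B_{n+1} \geq 1 - 2b_n + \binom{2b_n+k}{k}$. It then suffices to show that $\binom{2b_n+k}{k} - 2b_n - 1 > 2\binom{b_n+k-2}{k} + 4$. Setting $x = b_n \geq b_4 \geq 6$, I would compare $\binom{2x+k}{k}$ with $\binom{x+k-2}{k}$ factor by factor: $\binom{2x+k}{k}/\binom{x+k-2}{k} = \prod_{i=1}^{k} \frac{2x+i}{x-2+i}$. Each factor is at least $2$, and the $i=1$ factor is $\frac{2x+1}{x-1} > 2$. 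The product is therefore at least $8$ once $k \geq 3$. This yields $\binom{2x+k}{k} \geq 8\binom{x+k-2}{k}$, so it suffices that $6\binom{x+k-2}{k} > 2x+5$. That holds since $\binom{x+k-2}{k} \geq \binom{x+1}{3} \geq 35$ for $x \geq 6$ and $k \geq 3$, which dominates $2x+5$ for all relevant $x$ (the cubic beats the linear term).

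The main obstacle is the lower-bound step. The loss from the $-B_{n-1}$ term in the recurrence and the shift from $k-2$ to $k-1$ in the binomial must be absorbed by the factor-of-$2$ slack, and this only works with $k \geq 3$ (three factors each $\geq 2$); for $k=2$, $B_n = b_n$, so the argument must genuinely use $k \geq 3$. I would double-check that the monotonicity claims hold over the integer ranges used, and that the base case really is $n=4$ for every $k \geq 3$.
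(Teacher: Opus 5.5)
Your proof is correct. For the lower bound it takes a genuinely different and cleaner route than the paper, though one intermediate claim needs a one-line repair.

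\textbf{Comparison with the paper.} The paper first proves $B_n<b_{n+1}$ by induction from $n=3$, using $b_n\geq B_{n-1}+1$; this is essentially your upper-bound step. Its lower-bound step then \emph{needs} that upper bound: it replaces $-B_{n-1}$ by $-b_n$ and expands $\binom{B_{n-1}+k-1}{k}$ and $(1+c)\binom{b_{n-1}+k-2}{k}$ as falling products. It then substitutes $B_{n-1}\geq cb_{n-1}$ and compares the two lowest factors, $(cb_{n-1}+1)\,cb_{n-1}>(1+c)\,b_{n-1}(b_{n-1}-1)$. It finishes with separate numerical checks for $k=3$ and $k\geq 4$, based on $b_4=k+3$.

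You instead absorb the $-B_n$ term through the monotonicity of $x\mapsto\binom{x+k-1}{k}-x$, which gives $B_{n+1}\geq 1-2b_n+\binom{2b_n+k}{k}$ directly. As a result, the two inequalities become independent inductions. The factor ratio $\prod_{i=1}^{k}\frac{2x+i}{x-2+i}$ then handles all $k\geq 3$ uniformly, with no case split.

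Your base case, via the ratio $\binom{2k+2}{k+1}/\binom{2k}{k}=\frac{2(2k+1)}{k+1}\geq 3.5$, replaces the paper's bound $\binom{2x}{x}\geq 2^{x-1}(x+1)$. Either works.

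\textbf{The claim that needs repair.} It is not true that every factor $\frac{2x+i}{x-2+i}$ is at least $2$. This holds only for $i\leq 4$; for $i\geq 5$ the factor lies strictly between $1$ and $2$ (for example, $\frac{2x+5}{x+3}<2$). Your conclusion still stands. Every factor exceeds $1$, and the factors for $i=1,2,3$, namely $\frac{2x+1}{x-1}$, $\frac{2x+2}{x}$ and $\frac{2x+3}{x+1}$, each exceed $2$. So the product exceeds $8$ whenever $k\geq 3$, which is exactly what your last paragraph says.

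\textbf{A step to spell out.} The bound $\binom{x+k-2}{k}\geq\binom{x+1}{3}$ deserves its one-line justification. Raising $k$ by one multiplies $\binom{x+k-2}{k}$ by $\frac{x+k-1}{k+1}$, which is at least $1$ when $x\geq 2$.
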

\begin{proof}
We first prove the right-hand side $B_{n} < b_{n+1}$, inducting on $n$ (and starting with $n=3$). For the base case of $n=3$, by \cref{eq:recurrence_Bn}, noting $B_2=2$, $B_3 = 2-B_2 + {B_2 + k - 1 \choose k} = 2-2+{2+k-1 \choose k} = k+1$.
By \cref{eq:recurrence_bn}, noting $b_3=3$, $b_4=2+{b_3+k-2 \choose k} = k+3 > B_3$.

Fix $n \geq 4$ and suppose $B_\ell < b_{\ell+1}$ for all $\ell$, $3 \leq \ell \leq n-1$. By~\cref{eq:recurrence_bn} and~\cref{eq:recurrence_Bn},
\begin{align*}
b_{n+1} - B_n &= 2 + \binom{b_n + k - 2}{k} - 2 + B_{n-1} - \binom{B_{n-1} + k-1}{k} \\
&= B_{n-1} + \binom{b_n + k - 2}{k} - \binom{B_{n-1} + k - 1}{k} \\ & \geq B_{n-1} \\ & > 0,
\end{align*}
where the first inequality holds by the inductive hypothesis $b_n \geq B_{n-1} + 1$ and because $\binom{x}{k}$ is increasing in $x$ for $x \geq k$. The induction is complete.

If $n=4$, we have that $cb_4 = c(k+3)$ and $B_4 = - (k - 1) + \binom{2k}{k}$. We claim that $c(x+3) < -(x - 1) + \binom{2x}{x}$ is true for all $x \geq 3$, and because $k \geq 3$, $cb_4 < B_4$. Indeed, for $x \geq 3$, 
\begin{align*}
-(x - 1) + \binom{2x}{x} \geq -(x - 1) + 2^{x-1} (x+1) \geq -(x - 1) + 4(x+1) = 3x+ 5 \geq c(x+3),
\end{align*}
where we use $\binom{2x}{x} \geq 2^{x-1} (x+1)$ for all $x \geq 1$ and $3x+5 \geq 2x+6$ for all $\tcr{x \geq 1}$.

Fix $n \geq 5$ and suppose $cb_\ell< B_\ell$ for all $\ell$, $4 \leq \ell \leq n-1$. By~\cref{eq:recurrence_bn} and~\cref{eq:recurrence_Bn},
\begin{align}
B_n - cb_n &= 2 - B_{n-1} + \binom{B_{n-1} + k - 1}{k} - c \bigg[2 + \binom{b_{n-1} + k - 2}{k} \bigg] \nonumber \\
&= 2(1-c) - B_{n-1} + \binom{B_{n-1} + k - 1}{k} - c \binom{b_{n-1} + k - 2}{k} \nonumber \\
&> 2(1-c) - b_n + \binom{B_{n-1}+k-1}{k} - c \binom{b_{n-1} + k - 2}{k} \nonumber \\
&= 2(1-c) - 2 - \binom{b_{n-1}+k-2}{k} + \binom{B_{n-1} + k - 1}{k} - c \binom{b_{n-1} + k - 2}{k} \nonumber \\
&= -2c + \binom{B_{n-1} + k - 1}{k} - (1 + c) \binom{b_{n-1} + k - 2}{k}. 
\label{eq:B_n-cb_n-simplified} 
\end{align}
In the inequality step, we have used $B_{n-1} < b_n$ for all $n \geq 4$.

We conduct a term-wise expansion of the expression in~\cref{eq:B_n-cb_n-simplified}, applying the inductive hypothesis: 
\begin{align}
& -2c + \frac{1}{k!} [(B_{n-1} + k - 1)(B_{n-1} + k - 2) \cdots (B_{n-1}+1)(B_{n-1}) \nonumber \\
& - (1+c)(b_{n-1} + k - 2)(b_{n-1} + k - 3)\cdots(b_{n-1})(b_{n-1} - 1)] \nonumber \\
&> -2c + \frac{1}{k!} [(cb_{n-1} + k - 1)(cb_{n-1} + k - 2) \cdots (cb_{n-1}+1)(cb_{n-1}) \nonumber \\
& \quad - (1+c)(b_{n-1} + k - 2)(b_{n-1} + k - 3)\cdots(b_{n-1})(b_{n-1} - 1)]. 
\label{ineq:binomial-expanded}
\end{align}
Noting that $2c+1 > 0$ and $c^2-c-1 > 0$, and $b_n > 0$ for all $n \geq 1$, trivially $b_{n-1} > -(2c+1)/(c^2-c-1)$ for $n \geq 5$. It then follows that $(cb_{n-1}+1)(cb_{n-1}) > (1+c)(b_{n-1})(b_{n-1}-1)$. 

Replacing the two last terms $(cb_{n-1}+1)(cb_{n-1})$ in the left-most product in~\cref{ineq:binomial-expanded} with $(1+c)(b_{n-1})(b_{n-1}-1)$, we have
\begin{align}
B_n - cb_n & > -2c + \frac{1}{k!} [(cb_{n-1} + k - 1)(cb_{n-1} + k - 2) \cdots (cb_{n-1}+2) \times (1+c)(b_{n-1})(b_{n-1}-1) \nonumber \\
& \qquad - (1+c)(b_{n-1} + k - 2)(b_{n-1} + k - 3)\cdots(b_{n-1})(b_{n-1} - 1)] \nonumber \\
& = -2c + \frac{1}{k!}(1+c)(b_{n-1})(b_{n-1}-1)\times \nonumber \\ 
& \qquad \Big[(cb_{n-1}+k-1)(cb_{n-1}+k-2) \cdots (cb_{n-1}+2) - (b_{n-1}+k-2)(b_{n-1}+k-3) \cdots (b_{n-1}+1) \Big] \nonumber \\
&> -2c + \frac{1}{k!}(1+c)(b_{n-1})(b_{n-1}-1) \times \nonumber \\
& \qquad \Big[(b_{n-1}+k-1)(b_{n-1}+k-2)\dots(b_{n-1}+2) - (b_{n-1}+k-2)(b_{n-1}+k-3) \cdots (b_{n-1}+1)\Big]. 
\label{ineq:binomial-expanded-ii}
\end{align}
We wish to show that the quantity in~\cref{ineq:binomial-expanded-ii} is positive for all $k \geq 3$ and $n \geq 5$. If $k=3$, noting that at $n=5$, $b_{5-1}=6$ (Table~\ref{tab:bn}), and $b_n$ is monotonically increasing in $n$ (\cref{prop:strict_monotonic}), the right-hand side of~\cref{ineq:binomial-expanded-ii} satisfies
\begin{align*}
-2c + \frac{1}{6} (1+c)(b_{n-1})(b_{n-1}-1) [(b_{n-1}+2) - (b_{n-1}+1)] \geq -2c + \frac{1}{6}(1+c)(b_4)(b_4-1) = 11 > 0.
\end{align*}
If $k \geq 4$, then recalling $b_4=k+3$, for $n\geq5$, the right-hand side of~\cref{ineq:binomial-expanded-ii} satisfies 
\begin{align*}
& -2c + \frac{1}{k!}(1+c)(b_{n-1})(b_{n-1}-1) \times (b_{n-1}+k-2)(b_{n-1}+k-3) \cdots(b_{n-1}+2) \times(k-2) \\
& \geq -2c + \frac{1}{k!}(1+c)(b_{4})(b_{4}-1) \times (b_4 + k-2) (b_4+k-3) \cdots(b_4+2) \times (k-2) \\ 
& = -2c + \frac{1}{k!}(1+c)(k+3)(k+2) \times (2k+1)(2k) \cdots(k+5) \times (k-2) \\
& = -2c + \frac{(2k+1)!}{(k+4)! \, k!} (1+c)(k+3)(k+2)(k-2) \\
& = -2c + {2k+1 \choose k}(1+c) \frac{k-2}{k+4} \\
& \geq -4 + {9 \choose 4}(3) \frac{2}{8}= \frac{181}{2} > 0.
\end{align*}
We conclude that $\gamma_k > \beta_k$ for all $k \geq 3$.
\end{proof}

\bibliographystyle{abbrv}
{\small \bibliography{sources.bib}}
\medskip

\end{document}